\newtheorem{theorem}{Theorem}[section]
\newtheorem{lemma}[theorem]{Lemma}
\theoremstyle{definition}
\newtheorem{example}[theorem]{Example}
\theoremstyle{remark}
\newtheorem{remark}[theorem]{Remark}
\numberwithin{equation}{section}
\DeclareSymbolFont{bbold}{U}{bbold}{m}{n}
\DeclareSymbolFontAlphabet{\mathbbold}{bbold}
\newcommand{\bboldone}{\mathbbold{1}}
\DeclareMathOperator{\dom}{dom}
\newcommand{\vol}{\operatorname{vol}}
\newcommand{\eps}{{\epsilon}}
\newcommand{\inv}{{-1}}
\newcommand{\Jacobian}{\nabla}
\newcommand{\dif}{{\mathrm d}}
\newcommand{\grad}{\operatorname{grad}}
\newcommand{\curl}{\operatorname{curl}}
\newcommand{\divergence}{\operatorname{div}}
\newcommand{\diam}{{\operatorname{diam}}}
\newcommand{\dist}{\operatorname{dist}}
\newcommand{\Lip}{\operatorname{Lip}}
\newcommand{\trace}{\operatorname{tr}}
\newcommand{\Id}{\operatorname{Id}}
\newcommand{\interior}{\operatorname{int}}
\newcommand{\supp}{\operatorname{supp}}
\newcommand{\cartan}{{\mathsf d}}
\newcommand{\cartanx}{{{\mathsf d}x}}
\newcommand\suchthat{\@ifstar
  {\mathrel{}\middle|\mathrel{}}
  {\mid}}
\newcommand{\bbN}{{\mathbb N}}
\newcommand{\bbR}{{\mathbb R}}
\newcommand{\bbZ}{{\mathbb Z}}
\newcommand{\bfH}{{\mathbf H}}
\newcommand{\bfN}{{\mathbf N}}
\newcommand{\bfR}{{\mathbf R}}
\newcommand{\bfT}{{\mathbf T}}
\newcommand{\calA}{{\mathcal A}}
\newcommand{\calB}{{\mathcal B}}
\newcommand{\calC}{{\mathcal C}}
\newcommand{\calE}{{\mathcal E}}
\newcommand{\calL}{{\mathcal L}}
\newcommand{\calP}{{\mathcal P}}
\newcommand{\calT}{{\mathcal T}}
\newcommand{\calU}{{\mathcal U}}
\newcommand{\calW}{{\mathcal W}}
\newcommand{\frakB}{{\mathfrak B}}
\newcommand{\frakH}{{\mathfrak H}}
\newcommand{\frakZ}{{\mathfrak Z}}
\newcommand{\scrL}{{\mathscr L}}
\DeclareFontFamily{OMX}{MnSymbolE}{}
\DeclareSymbolFont{MnLargeSymbols}{OMX}{MnSymbolE}{m}{n}
\DeclareFontShape{OMX}{MnSymbolE}{m}{n}{
    <-6>  MnSymbolE5
   <6-7>  MnSymbolE6
   <7-8>  MnSymbolE7
   <8-9>  MnSymbolE8
   <9-10> MnSymbolE9
  <10-12> MnSymbolE10
  <12->   MnSymbolE12
}{}
\DeclareFontShape{OMX}{MnSymbolE}{b}{n}{
    <-6>  MnSymbolE-Bold5
   <6-7>  MnSymbolE-Bold6
   <7-8>  MnSymbolE-Bold7
   <8-9>  MnSymbolE-Bold8
   <9-10> MnSymbolE-Bold9
  <10-12> MnSymbolE-Bold10
  <12->   MnSymbolE-Bold12
}{}
\let\llangle\@undefined
\let\rrangle\@undefined
\DeclareMathDelimiter{\llangle}{\mathopen}{MnLargeSymbols}{'164}{MnLargeSymbols}{'164}
\DeclareMathDelimiter{\rrangle}{\mathclose}{MnLargeSymbols}{'171}{MnLargeSymbols}{'171}
\newcommand{\extrinsic}[1]{\underline{#1}}
\newcommand{\compute}[1]{\undertilde{#1}}
\newcommand{\Ball}{\calB}
\newcommand{\Sphere}{S}
\newcommand{\Cont}{\calC}
\newcommand{\Lebesgue}{\calL}
\newcommand{\Sobdiff}{\calL}
\newcommand{\Sobolev}{\calW}
\newcommand{\loc}{\rm{loc}}
\newcommand{\ES}{\calE}
\newcommand{\Inside}{\omega}
\newcommand{\Outside}{\Omega}
\newcommand{\Manifold}{M}
\newcommand{\otherManifold}{N}
\newcommand{\extManifold}{\extrinsic{M}}
\newcommand{\Regu}{R}
\newcommand{\Interpolant}{I}
\newcommand{\smoothedinterpol}{Q}
\newcommand{\smoothedproj}{\pi}
\newcommand{\discreteinverse}{J}
\newcommand{\Ariinc}{A_h}
\newcommand{\Laplace}{\scrL}
\newcommand{\Ctrafosatz}[5]{C_{#1,#2,#4,#5}} \newcommand{\Cinterpol}[3]{C_{#1,#2,#3}}
\newcommand{\Cari}{C_{A}}
\newcommand{\homeo}{\Theta}
\newcommand{\incl}{\imath}
\newcommand{\patchincl}{\jmath}
\newcommand{\molli}{\mu}
\newcommand{\scaler}{\phi}
\newcommand{\meshfunc}{\mathtt{h}}
\newcommand{\distancefunc}{\delta}
\newcommand{\Mesh}{\calT}
\newcommand{\extMesh}{\extrinsic{\calT}}
\newcommand{\shapeconsimplex}[2]{C_{\mu,#1,#2}}
\newcommand{\shapeconsimplexONE}[1]{C_{\mu,#1}}
\newcommand{\shapeconquasiuniform}{C_{\triangle}} \newcommand{\shapeconadjacency}{C_{\sharp}} \newcommand{\neighborconstant}{\beta}
\newcommand{\patchnumber}{N}
\newcommand{\NumberOfCharts}{K}
\newcommand{\Cretainregularity}[4]{C_{#1,#2,#3,#4}}
\newcommand{\Cmsf}{C_{h}}
\newcommand{\coeff}{c}
\newcommand{\metric}{g}
\newcommand{\cpp}{a}
\newcommand{\Dim}{d}
\newcommand{\PF}{\mathrm{PF}}
\newcommand{\Ned}{{\bfN\bf{d}}}
\newcommand{\RT}{{\bfR\bfT}}
\DeclareFontFamily{OMX}{MnSymbolE}{}
\DeclareSymbolFont{MnLargeSymbols}{OMX}{MnSymbolE}{m}{n}
\DeclareFontShape{OMX}{MnSymbolE}{m}{n}{
    <-6>  MnSymbolE5
   <6-7>  MnSymbolE6
   <7-8>  MnSymbolE7
   <8-9>  MnSymbolE8
   <9-10> MnSymbolE9
  <10-12> MnSymbolE10
  <12->   MnSymbolE12
}{}
\DeclareFontShape{OMX}{MnSymbolE}{b}{n}{
    <-6>  MnSymbolE-Bold5
   <6-7>  MnSymbolE-Bold6
   <7-8>  MnSymbolE-Bold7
   <8-9>  MnSymbolE-Bold8
   <9-10> MnSymbolE-Bold9
  <10-12> MnSymbolE-Bold10
  <12->   MnSymbolE-Bold12
}{}
\let\llangle\@undefined
\let\rrangle\@undefined
\DeclareMathDelimiter{\llangle}{\mathopen}{MnLargeSymbols}{'164}{MnLargeSymbols}{'164}
\DeclareMathDelimiter{\rrangle}{\mathclose}{MnLargeSymbols}{'171}{MnLargeSymbols}{'171}
\begin{document}

\title[Smoothed projections over manifolds in FEEC]
{Smoothed projections over manifolds\\in finite element exterior calculus}

\author[Martin W.\ Licht]{Martin W.\ Licht}
\address{Department of Mathematics, \'Ecole~Polytechnique~F\'ed\'erale~de~Lausanne (EPFL), Station 8, 1015 Lausanne, Switzerland}
\email{martin.licht@epfl.ch}

\thanks{This research was supported by the European Research Council through 
the FP7-IDEAS-ERC Starting Grant scheme, project 278011 STUCCOFIELDS. This material is based upon work supported by the National Science Foundation 
under Grant No. DMS-1439786 while the author was in residence at the Institute for Computational and 
Experimental Research in Mathematics in Providence, RI, during the ``Advances in Computational Relativity'' program. 
The author would like to thank Snorre Christiansen for numerous helpful remarks and advice.}

\subjclass[2010]{Primary 65N30; Secondary 58A12}
\keywords{Finite element exterior calculus, smoothed projection, surface finite element method}

\date{\today}

\begin{abstract}
 We develop commuting finite element projections over smooth Riemannian manifolds. 
 This extension of finite element exterior calculus establishes the stability and convergence of finite element methods for the Hodge-Laplace equation on manifolds. 
 The commuting projections use localized mollification operators, building upon a classical construction by de Rham. 
 These projections are uniformly bounded on Lebesgue spaces of differential forms and map onto intrinsic finite element spaces defined with respect to an intrinsic smooth triangulation of the manifold. 
 We analyze the Galerkin approximation error. 
 Since practical computations use extrinsic finite element methods over approximate computational manifolds, we also analyze the geometric error incurred.
\end{abstract}

\maketitle

\section{Introduction}\label{sec:intro}

Numerical methods for partial differential equations over surfaces and manifolds are an active area of research~\cite{holst2001adaptive,NeNiRuWh07,LeNeRu09,dziuk2013finite,demlow2012adaptive,dziuk2013finite,olshanskii2009finite,nitschke2018nematic,bonito2020finite,bonito2019posteriori,bonito2020divergence,boon2021functional,brandner2022finite,hardering2023tangential}. 
The literature extensively covers finite element methods for scalar-valued partial differential equations over manifolds. 
However, fundamental topics on finite element methods for vector field partial differential equations over manifolds still leave room for exploration.
The aim of this work is to address the stability and convergence of finite element methods for the Hodge--Laplace equation over manifolds.
The main mathematical contribution is the construction of smoothed projections from Lebesgue--de~Rham complexes onto finite element de~Rham complexes.

The analysis of partial differential equations over manifolds involves concepts that are the core of differential geometry and algebraic topology. 
Therefore, it is only natural to adopt concepts such as the calculus of differential forms in numerical analysis,
and the central role of \emph{discrete de~Rham complexes} in areas such as numerical electromagnetism is now widely understood. 
Following that line of thought has given rise to the framework of \emph{finite element exterior calculus}~(FEEC,\cite{AFW1,AFW2,hiptmair2001higher,hiptmair2002finite,christiansen2007stability,hiptmair2013sparse}). 
Here, many prototypical partial differential equations are conceived as variants of the Hodge--Laplace equation. 

\emph{Commuting projections} from the original de~Rham complex onto the finite element de~Rham complex 
serve as the mathematical connector between the smooth and the discrete worlds:
they are critical in proving the stability and quasi-optimal convergence in FEEC. 
However, constructing them is an intricate endeavor, even over domains in Euclidean space. 
Numerous projections have been constructed in the literature, applicable to different variations of the de~Rham complex and with different locality and boundedness properties; see, e.g.,~\cite{AFW1,christiansen2007stability,schoberl2008posteriori,falk2012local,boffi2013mixed,ern2016mollification,christiansen2018eigenmode,licht2019smoothed,licht2019mixed,arnold2021local,ern2022equivalence}. 
The present contribution extends \emph{smoothed projections} to manifolds. 
\\

We outline the construction of smoothed projections and their role in the theory of finite element methods. 
While our analysis will address general $n$-dimensional manifolds, 
the following illustration will assume that we work with some $3$-dimensional compact smooth Riemannian manifold $\Manifold$. 
We begin by recalling the $\Lebesgue^{2}$ de~Rham complex: 
\begin{gather*}
    \begin{CD}
        0 \to
        H^{1}(\Manifold)
        @>{\grad}>>
        \bfH(\curl,\Manifold)
        @>{\curl}>>
        \bfH(\divergence,\Manifold)
        @>{\divergence}>>
        \Lebesgue^{2}(\Manifold)
        \to 
        0
        .
    \end{CD}
\end{gather*}
As a theoretical tool, we introduce an intrinsic smooth triangulation $\Mesh$ of the manifold $\Manifold$. 
With respect to that intrinsic triangulation, we consider any \emph{finite element de~Rham complex}
with respect to this intrinsic triangulation, such as 
\begin{gather*}
    \begin{CD}
        0 \to
        \calP_{r+1}(\Mesh)
        @>{\grad}>>
        \Ned_{r}(\Mesh)
        @>{\curl}>>
        \RT_{r}(\Mesh)
        @>{\divergence}>>
        \calP_{r,\rm{DC}}(\Mesh)
        \to 
        0
        .
    \end{CD}
\end{gather*}
Here, broadly speaking, we define finite element spaces via the piecewise pullback of polynomial differential forms from (Euclidean) reference simplices onto the intrinsic simplices of the manifold.\footnote{The reader may recall that there is no intrinsic notion of polynomials or polynomial differential forms over manifolds.} 
For instance, that provides intrinsic notions of Lagrange elements, N\'ed\'elec elements, and Raviart-Thomas elements of polynomial degree $r$ over triangulations of $3$-manifolds. 
Commuting projections
$\smoothedproj_{h}^{k}$,
uniformly bounded in the triangulation parameters, 
give rise to a commuting diagram: 
\begin{gather*}
    \begin{CD}
        0 \to
        H^{1}(\Manifold)
        @>{\grad}>>
        \bfH(\curl,\Manifold)
        @>{\curl}>>
        \bfH(\divergence,\Manifold)
        @>{\divergence}>>
        \Lebesgue^{2}(\Manifold)
        \to 
        0
        \\
        @V{\smoothedproj_{h}^{0}}VV
        @V{\smoothedproj_{h}^{1}}VV
        @V{\smoothedproj_{h}^{2}}VV
        @V{\smoothedproj_{h}^{3}}VV
        \\
        0 \to
        \calP_{r+1}(\Mesh)
        @>{\grad}>>
        \Ned_{r}(\Mesh)
        @>{\curl}>>
        \RT_{r}(\Mesh)
        @>{\divergence}>>
        \calP_{r,\rm{DC}}(\Mesh)
        \to 
        0
        . 
    \end{CD}
\end{gather*}
Once we have provided such smoothed projections over the manifold, 
the \emph{Galerkin theory of Hilbert complexes}~\cite{AFW2} implies the stability and quasi-optimality of numerous finite element methods for the Hodge--Laplace equation.

In this contribution, we construct the smoothed projections as follows.
First, we devise smoothing operators over $\Manifold$.
These are bounded mappings from differential forms with coefficients in Lebesgue spaces onto smooth differential forms, and they commute with the differential operators. 
We thus get a commuting diagram:
\begin{gather*}
    \begin{CD}
        0 \to
        H^{1}(\Manifold)
        @>{\grad}>>
        \bfH(\curl,\Manifold)
        @>{\curl}>>
        \bfH(\divergence,\Manifold)
        @>{\divergence}>>
        \Lebesgue^{2}(\Manifold)
        \to 
        0
        \\
        @V{R^{0}}VV
        @V{R^{1}}VV
        @V{R^{2}}VV
        @V{R^{3}}VV
        \\
        0 \to
        \Cont^{\infty}(\Manifold)
        @>{\grad}>>
        \Cont^{\infty}T(\Manifold)
        @>{\curl}>>
        \Cont^{\infty}T(\Manifold)
        @>{\divergence}>>
        \Cont^{\infty}(\Manifold)
        \to 
        0
        . 
    \end{CD}
\end{gather*}
If a global coordinate system were present, as in the case of Euclidean domains,
then we could model the smoothing operator after the classical convolution with a bump function. 
However, when working over manifolds without global coordinate systems, 
we utilize \emph{localized} smoothing operators within coordinate charts,
extending upon ideas in de~Rham's seminal monograph~\cite{derham1984differentiable}.
The composition of those local smoothing operations has the desired properties. 

The second component of the smoothed projection is a family of canonical finite element interpolants $\Interpolant_{h}^{k}$.
These, again, are part of a commuting diagram:  
\begin{gather*}
    \begin{CD}
        0 \to
        \Cont^{\infty}(\Manifold)
        @>{\grad}>>
        \Cont^{\infty}T(\Manifold)
        @>{\curl}>>
        \Cont^{\infty}T(\Manifold)
        @>{\divergence}>>
        \Cont^{\infty}(\Manifold)
        \to 
        0
        \\
        @V{\Interpolant_{h}^{0}}VV
        @V{\Interpolant_{h}^{1}}VV
        @V{\Interpolant_{h}^{2}}VV
        @V{\Interpolant_{h}^{3}}VV
        \\
        0 \to
        \calP_{r+1}(\Mesh)
        @>{\grad}>>
        \Ned_{r}(\Mesh)
        @>{\curl}>>
        \RT_{r}(\Mesh)
        @>{\divergence}>>
        \calP_{r,\rm{DC}}(\Mesh)
        \to 
        0
        . 
    \end{CD}
\end{gather*}
Composing the canonical interpolants with the mollification operators 
yields commuting quasi-interpolants $\smoothedinterpol_{h}^{k} = \Interpolant_{h}^{k} R^{k}$. 
These satisfy bounds uniform in the mesh size and commute with the differential operators, but they are generally not idempotent. 
However, even though the interpolants do not act as the identity mapping over the finite element spaces, 
they act as isomorphisms whose inverses satisfy uniform bounds and commute with the differential operators.
This final (non-local) inverse allows us to construct the desired smoothed projections. 
\\

Having outlined the main topic of this contribution, let us comment upon the larger picture of the error analysis and some additional points that we include in the discussion. 

The primary purpose of the smoothed projection is to serve as a tool to ultimately establish the stability and convergence of practically implementable numerical methods for the Hodge--Laplace equation on manifolds.
We first establish the quasi-optimality and stability of the \emph{intrinsic finite element method},
defined over an \emph{intrinsic triangulation} of the manifold. 
The intrinsic triangulation is only computable in special cases,
such as in computer-aided geometric design~\cite{buffa2010isogeometric,temizer2011contact}. 
We therefore deem its importance to be, first and foremost, in the theoretical domain.

In order to ensure the practical relevance of the error analysis, 
we compare this intrinsic finite element method with a practically implementable \emph{computational finite element method} over an \emph{extrinsic triangulation} and analyze the \emph{geometric error} incurred. 
We build upon insights from the analysis of surface finite element methods~\cite{demlow2007adaptive,olshanskii2009finite,dziuk2013finite}
and a recent analysis of isoparametric finite element methods~\cite{holst2023geometric}.

We conduct this as follows.
We introduce an explicitly known triangulated \emph{computational manifold} $\extManifold$ described by an affine triangulation $\extMesh$. 
The reader is invited to think of $\extManifold$ as an approximation of the true \emph{physical manifold} $\Manifold$.
The error analysis of the computational problem relies on a homeomorphism $\homeo \colon \extManifold \rightarrow \Manifold$ that maps the extrinsic triangulation onto the intrinsic triangulation. 
Such a homeomorphism $\homeo$ is explicitly known in some applications but should generally be considered of mostly theoretical significance as well:
we know of its existence and regularity. 
We simply let the intrinsic triangulation be the image of the extrinsic triangulation onto the physical manifold. 
One example is the closest-point homeomorphism for hypersurfaces in the literature on surface finite element methods~\cite{demlow2009higher}.  

Along this homeomorphism, we translate the intrinsic finite element problem to an equivalent finite element problem over the extrinsic triangulation. 
This new equivalent finite element problem over the extrinsic triangulation might not yet be implementable, 
just like the physical manifold and the aforementioned homeomorphism,
because the pullback of the Riemannian metric from the physical manifold onto the computational manifold is not exactly computable in practice. 

We finally define the implementable \emph{computational finite element problem} by replacing this pullback Riemannian metric by a computationally feasible (approximate) Riemannian metric. 
The construction of this approximate Riemannian metric depends on the specific implementation details of the geometric discretization. 
We incur an additional \emph{geometric error} that is controlled by mesh parameters in practice. 
Intuitively, the geometric error is small if the homeomorphism $\homeo$ is close to being an isometry. 
In the example of surface finite element methods, 
the computational problem is constructed via an interpolated closest-point projection
when the extrinsic surface is close enough to the physical surface. 
We then bound the geometric error with an estimate first proven in~\cite{christiansen2002resolution}.

A question easily overlooked but nevertheless critical is how to establish best-approximation error estimates 
in terms of the mesh size for the intrinsic finite element method (or for its equivalent formulation over the extrinsic triangulation).
In the absence of higher regularity of the solutions, 
this can be achieved using a \emph{broken Bramble--Hilbert lemma}~\cite{veeser2016approximating,camacho2015L2,licht2021local,licht2023averaging}.
However, deriving such an estimate for the manifold-case is perpendicular to our present discussion and shall be handled in a separate, complementary manuscript. 
We remark that our analysis is also complementary to~\cite{HS1}, who provide an error analysis for FEEC over manifolds and hypersurfaces if the commuting projection is available; we contribute such a commuting projection and recapitulate the main points of their discussion. 

The main application of the smoothed projection is in the error analysis of finite element simulations over embedded two-dimensional surfaces and three-dimensional curved manifolds in numerical relativity. We keep track of the geometric and triangulation quantities that enter the estimates. This enables uniform estimates for parameterized, evolving, or stochastic geometries.
\\

The remainder of this article is structured as follows.
In Section~\ref{sec:preliminaries} we review the calculus of differential forms on smooth Riemannian manifolds. 
In Section~\ref{sec:triangulations} we introduce triangulations and in Section~\ref{sec:finiteelementspaces} we discuss abstract finite element spaces.
In Section~\ref{sec:smoothing} we discuss a class of localized smoothing operators on $\bbR^{n}$.
We construct the smoothed projection in Section~\ref{sec:smoothedprojections}. 
We discuss the a~priori error analysis of the intrinsic finite element in Section~\ref{sec:applications}.
Finally, we discuss some concrete examples of our abstract framework and the geometric errors of the computational problems in Section~\ref{sec:embeddedtriangulations}.

\section{Preliminaries}\label{sec:preliminaries}

This section establishes notation and fundamental notions for the analysis over manifolds. 
We assume that the reader is familiar with the calculus of differential forms and refer to the monographs by Ahsan~\cite{ahsan2015tensors}, Hou and Hou~\cite{hou1997differential}, Lee~\cite{LeeSmooth}, and Lovett~\cite{lovett2019differential} as general references.
\\

Some very general conventions are these. 
If $r > 0$ and $x \in \bbR^{n}$, then $\Ball_r(x) \subseteq \bbR^{n}$ and $\Sphere_r(x) := \partial \Ball_r(x)$ are the open ball and the sphere, respectively, around $x$ of radius $r > 0$ and with respect to the Euclidean metric.
For $k \in \bbZ$ and $n \in \bbN$,
we let $\Sigma(k,n)$ denote the set of strictly ascending 
functions from $\{ 1, \dots, k \}$ to $\{ 1, \dots, n \}$.
We emphasize that, by definition, $\Sigma(0,n) = \{\emptyset\}$,
and $\Sigma(k,n) = \emptyset$ whenever $k \notin \{0,\dots,n\}$.

\subsection{Manifolds and differential forms}

Throughout this entire text, $\Manifold$ is a connected oriented smooth manifold of dimension $n$.
We let $\Cont^{\infty}\Lambda^{k}(\Manifold)$ denote the space of smooth differential $k$-forms over $\Manifold$. 
We abbreviate $\Cont^{\infty}(\Manifold) := \Cont^{\infty}\Lambda^{0}(\Manifold)$. 
We write $\Cont^{\infty}_{c}\Lambda^{k}(\Manifold)$ for the space of those members 
of $\Cont^{\infty}\Lambda^{k}(\Manifold)$ whose support is compact in $\Manifold$.
We let $\Cont^{m}\Lambda^{k}(\Manifold)$ denote the space of $m$-times differentiable differential $k$-forms over $\Manifold$. 
This includes the continuous differential $k$-forms $\Cont\Lambda^{k}(\Manifold) := \Cont^{0}\Lambda^{k}(\Manifold)$ as a special case. 
By $\bboldone_{\Manifold} \in \Cont^{\infty}(\Manifold)$ we denote the constant function with value one over $\Manifold$.

We review a few standard operations on differential forms. 
A differential operator central to our discussion is the \emph{exterior derivative}
\begin{gather}\label{math:exteriorderivative}
 \cartan : \Cont^{\infty}\Lambda^{k}(\Manifold) \rightarrow \Cont^{\infty}\Lambda^{k+1}(\Manifold).
\end{gather}
The exterior derivative satisfies $\cartan \cartan {u} = 0$ for any differential $k$-form ${u}$.
The \emph{exterior product} of two differential forms ${u} \in \Cont^{\infty}\Lambda^{k}(\Manifold)$ and ${v} \in \Cont^{\infty}\Lambda^{l}(\Manifold)$ is a differential $(k+l)$-form and written ${u} \wedge {v} \in \Cont^{\infty}\Lambda^{k+l}(\Manifold)$. 

Occasionally we work in coordinates. 
Suppose that $U \subseteq \Manifold$ is the domain of a coordinate chart with coordinates $x_{1}, x_{2}, \dots, x_{n} : U \rightarrow \bbR$.
With respect to those coordinates, any ${u} \in \Cont^{\infty}\Lambda^{k}(U)$ has the \emph{standard representation}
\begin{gather}\label{math:standardrepresentation}
    {u} = \sum_{\sigma \in \Sigma(k,n)} {u}_{\sigma} \cartanx^{\sigma},
\end{gather}
where ${u}_{\sigma} \in \Cont^{\infty}(U)$ and where we abbreviate 
$\cartanx^{\sigma} = \cartanx_{\sigma(1)} \wedge \dots \wedge \cartanx_{\sigma(k)}$.
\\

Our discussion involves differential forms with non-smooth coefficients.
The differences are mostly technical.
There is an invariant notion of differential $k$-forms with measurable coefficients. 
The exterior product of two such measurable differential forms is again measurable, 
and any measurable differential form has a standard representation~\eqref{math:standardrepresentation} with measurable coefficients in any local coordinate chart. 

If $u$ is a measurable differential $n$-form, then the integral $\int_{U} {u}$ over any measurable subset $U \subseteq \Manifold$ is well-defined, 
which it is either divergent or a convergent to a finite number; in particular, it is independent of any choice of local coordinates. 
\\

When $p \in [1,\infty]$,
then $\Lebesgue^{p}_{\loc}\Lambda^{k}(\Manifold)$ is the vector space 
of differential $k$-forms over $\Manifold$
that have locally ${p}$-integrable coefficients in every coordinate chart.

The exterior derivative also extends to differential forms with coefficients in Lebesgue spaces. 
That requires the notion of weak exterior derivative. 
We say that $w \in \Lebesgue^{1}_{\loc}\Lambda^{k+1}(\Manifold)$ is the \emph{weak exterior derivative} of $u \in \Lebesgue^{1}_{\loc}\Lambda^{k}(\Manifold)$
if for all compactly supported $\eta \in \Cont^{\infty}\Lambda^{n-k-1}(\Manifold)$ it holds that 
\begin{align}
    \int_{\Manifold} u \wedge \cartan\eta = \int_{\Manifold} w \wedge \eta.
\end{align}
The weak exterior derivative is uniquely defined in the measurable sense, and we write $\cartan u$ for the weak exterior derivative. 
We are particularly interested in the following class of Sobolev spaces of differential forms: for any $p, q \in [1,\infty]$, we define 
\begin{align}\label{math:localsobolevspace}
    \Sobdiff^{p,q}_{\loc}\Lambda^{k}(\Manifold)
    := 
    \left\{\; 
        {u} \in \Lebesgue^{p}_{\loc}\Lambda^{k}(\Manifold)
        \suchthat*
        \text{ $u$ has a weak exterior derivative } \cartan{u} \in \Lebesgue^{q}_{\loc}\Lambda^{k}(\Manifold) 
    \;\right\} 
    .
\end{align}

\subsection{Metric constructions}

We assume that $\Manifold$ is equipped with a (smooth) Riemannian metric $\metric$. 
In other words, $\metric$ is a symmetric order-$2$ covariant tensor field over $\Manifold$
that in any local coordinate chart is expressible as a smooth field of symmetric positive definite bilinear forms. 
The Riemannian metric induces numerous other structures on the manifold.
\\

Since $\Manifold$ is connected, 
the Riemannian metric $\metric$ induces a metric $d_{\metric} : \Manifold \times \Manifold \rightarrow \bbR$,
where ``metric'' is this time to be understood in the sense of metric spaces.\footnote{We will use the term \emph{metric} only in the sense of metric spaces. We call a \emph{metric} in the sense of differential geometry by the name \emph{Riemannian metric} and \emph{metric tensor}.} 
Specifically, the distance between two points $x, y \in \Manifold$ is exactly the geodesic distance
\begin{align*}
    \dist_{\metric}(x,y) := \inf_{\gamma} \int_{0}^{1} |\partial_t \gamma(t)|_{\metric} \dif t,
\end{align*}
where the infimum is taken over all smooth paths $\gamma : [0,1] \rightarrow \Manifold$ from $x$ to $y$.
The diameter of any set $A \subseteq \Manifold$, which we also call $\metric$-diameter, is then understood with respect to this geodesic distance.
For each $x \in \Manifold$ and $r > 0$, 
we let $\Ball_{r}(x,\metric)$ denote the $\metric$-ball of radius $r > 0$ around $x$,
which is the set of points $y \in \Manifold$ with $d_{\metric}(x,y) \leq r$.
More generally, $\Ball_{r}(A,\metric) := \cup_{x \in A}\Ball_{r}(x,\metric)$ whenever $A \subseteq \Manifold$. 
\\

Any Riemannian metric $\metric$ induces a scalar product $\langle\cdot,\cdot\rangle_{\metric}$ 
with induced norm $|\cdot|_{\metric}$ on almost every fiber of the tangent bundle. 
Both the scalar product and the norm extend to the fibers of the tensor products of the tangent bundle in a natural way.
One consequence is that we have pointwise bilinear pairings 
\begin{gather}\label{math:bilinearpairing}
    \langle\cdot,\cdot\rangle_{\metric} : 
    \Cont^{\infty}\Lambda^{k}(\Manifold) \times \Cont^{\infty}\Lambda^{k}(\Manifold)
    \rightarrow 
    \Cont^{\infty}(\Manifold),
    \quad 
    ({u},{v})
    \mapsto 
    \langle {u}, {v} \rangle_{\metric},
\end{gather}
and a pointwise norm $|{u}|_{\metric} := \sqrt{ \langle {u},{u}\rangle_{\metric} }$.
We have $\langle{u},{u}\rangle_{\metric} = 0$ almost everywhere if and only if ${u} = 0$ almost everywhere.

Any Riemannian metric $\metric$ over the oriented manifold $\Manifold$ induces a specific \emph{Riemannian volume form} $\vol_{\metric} \in \Lebesgue^{1}_{\loc}\Lambda^{n}(\Manifold,\metric)$.
For any locally integrable $n$-form ${u}$ there exists a unique locally integrable function ${u}_\metric$ such that ${u} = {u}_{\metric} \vol_{\metric}$.
Moreover, $\metric$ induces the Hodge star operator 
\begin{gather}\label{math:hodgestar}
    \star_{\metric} : \Lebesgue^{1}_{\loc}\Lambda^{k}(\Manifold) \rightarrow \Lebesgue^{1}_{\loc}\Lambda^{n-k}(\Manifold,\metric),
\end{gather}
which is uniquely described by  
${u} \wedge \star_{\metric} {v} = \langle {u},{v} \rangle_{\metric} \vol_{\metric}$.
In particular, the Riemannian volume form and the Hodge star satisfy $\vol_{\metric} = \star_{\metric} \bboldone_{\Manifold}$.
\\

The Riemannian structure allows us to define several Banach spaces of functions and differential forms 
such that the norms do not depend on arbitrary choices of coordinates. 

We define the Lebesgue spaces $\Lebesgue^{p}(\Manifold,\metric)$ over scalar fields in the usual way for any $p \in [1,\infty]$, namely as the Banach space of measurable scalar functions ${u} \in \Lebesgue^{1}_{\loc}(\Manifold)$ for which the norm 
\begin{align*}
    \| {u} \|_{\Lebesgue^{p}(\Manifold,\metric)}
    :=
    \left( \int_{\Manifold} |u|^{p} \vol_{\metric} \right)^{\frac 1 p}
\end{align*}
is finite, with the obvious modification in the case $p=\infty$. 
More generally, $\Lebesgue^{p}\Lambda^{k}(\Manifold,\metric)$ is the space of those differential forms ${u} \in \Lebesgue^{1}_{\loc}\Lambda^{k}(\Manifold)$
for which $|{u}|_{\metric} \in \Lebesgue^{p}(\Manifold,\metric)$.
One can show that $\Lebesgue^{p}\Lambda^{k}(\Manifold,\metric)$ is a Banach space with norm $\| {u} \|_{\Lebesgue^{p}\Lambda^{k}(\Manifold,\metric)} = \| |{u}|_{\metric} \|_{\Lebesgue^{p}(\Manifold,\metric)}$. 
Furthermore, $\Lebesgue^{2}\Lambda^{k}(\Manifold,\metric)$ is a Hilbert space whose is induced by the scalar product 
\begin{gather}\label{math:l2scalarproduct}
    \langle {u}, {v} \rangle_{\Lebesgue^{2}\Lambda^{k}(\Manifold,\metric)}
    :=
    \int_{\Manifold} \langle{u},{v}\rangle_{\metric} \vol_{\metric},
    \quad 
    {u} \in \Lebesgue^{2}\Lambda^{k}(\Manifold,\metric)
    .
\end{gather}
Given $p, q \in [1,\infty]$, we moreover define 
\begin{align}\label{math:sobolevspace}
 \Sobdiff^{p,q}\Lambda^{k}(\Manifold,\metric)
 := 
 \left\{\; 
    {u} \in \Lebesgue^{p}\Lambda^{k}(\Manifold,\metric)
    \suchthat*
    \text{ $u$ has a weak exterior derivative } \cartan{u} \in \Lebesgue^{q}\Lambda^{k+1}(\Manifold,\metric) 
 \;\right\}
 .
\end{align}
These are Banach spaces equipped with the natural norms.

\begin{remark}
    It is easily seen that the Lebesgue spaces (and Sobolev spaces defined below) over non-compact manifolds generally depend on the Riemannian metric. 
    For example, suppose that a non-compact manifold is endowed with two Riemannian metrics $\metric_1$ and $\metric_2$
    such that $\Manifold$ has finite volume with respect to $\metric_1$ but infinite volume with respect to $\metric_2$. 
    Any constant function is integrable in the first case but not in the second case. 
    However, the class of locally $p$-integrable differential forms is independent of the metric.
\end{remark}

Banach spaces of functions and differential forms that involve (weak) derivatives are considered as well. 
Central to this is the Levi-Civita connection. 
We identify the Levi-Civita connection with the unique covariant derivative of the tangent bundle 
that is torsion-free and leaves the metric invariant; we refer to the literature for a thorough discussion. 
Importantly, this covariant derivative extends to tensor bundles constructed from the tangent bundle in a natural manner. 

In particular, the Levi-Civita connection gives rise to the covariant derivatives of differential form. 
If ${u} \in \Cont^{m}\Lambda^{k}(\Manifold)$, then $\nabla^{m} {u}$ denotes the $m$-th covariant derivative of ${u}$.
Note that $\nabla^{m} {u}$ can be identified with a tensor with $m+k$ covariant indices that is symmetric in the first $m$ indices and alternating in the last $k$ indices. 
We recall that the Riemannian metric induces an inner product for those tensors as well, 
and so we introduce the norms $|\nabla^{m}{u}|_{\metric}$ when ${u} \in \Cont^{m}\Lambda^{k}(\Manifold)$. 
Whenever $U \subseteq \Manifold$ is open and ${u} \in \Cont^{m}\Lambda^{k}(U)$, 
we define the seminorms and norms 
\begin{align*}
    |{u}|_{\Cont^m\Lambda^{k}(U,\metric)}
    :=
    \max_{ x \in U } \left| \nabla^{m}{u}_{|x} \right|_{\metric},
    \quad 
    \|{u}\|_{\Cont^m\Lambda^{k}(U,\metric)}
    :=
    \max_{ 0 \leq l \leq m } |{u}|_{\Cont^l\Lambda^{k}(U,\metric)}.
\end{align*}
The covariant derivative also enables the definition of Sobolev spaces 
with norms that are independent of choices of local coordinate charts. 
For $m \in \bbN_{0}$ and $p \in [1,\infty]$, 
the \emph{Sobolev space} $\Sobolev^{m,p}\Lambda^{k}(\Manifold,\metric)$ is the subspace of $\Lebesgue^{p}\Lambda^{k}(\Manifold,\metric)$
whose members have $p$-integrable weak covariant derivatives up to order $m$. 
We introduce the norms and seminorms 
\begin{gather}
    \| {u} \|_{\Sobolev^{m,p}\Lambda^{k}(\Manifold,\metric)}
    :=
    \sum_{l=0}^{m} \| \nabla^{l} {u} \|_{\Lebesgue^{p}\Lambda^{k}(\Manifold,\metric)},  
    \quad 
    | {u} |_{\Sobolev^{m,p}\Lambda^{k}(\Manifold,\metric)}
    :=
    \| \nabla^{m} {u} \|_{\Lebesgue^{p}\Lambda^{k}(\Manifold,\metric)}.
\end{gather}
The Sobolev spaces are Banach spaces.

\begin{remark}
    We only consider Sobolev spaces with \emph{integer} differentiability in this work. 
    The reason is that the literature for non-integer Sobolev spaces is not yet sufficiently developed for the discussions in this article: 
    while it is relatively straight-forward to define Sobolev-Gagliardo-Slobodeckij spaces of scalar functions on manifolds,
    it is less straight-forward how to define fractional Sobolev spaces of tensor fields on manifolds. 
    Exploring this fundamental issue is not within the scope of this article. 
\end{remark}

\subsection{Pullbacks along diffeomorphisms}

We occasionally need to transform differential forms from one manifold to the other. 
Let $\Phi : \Manifold \rightarrow \otherManifold$ be a diffeomorphism between Riemannian manifolds $\Manifold$ and $\otherManifold$. 
For simplicity and with some abuse of notation, the latter's Riemannian metric is also written $\metric$. 
We write $\Phi^{\ast}{u}$ for the pullback of any ${u} \in \Lebesgue^{1}_{\loc}\Lambda^{k}(\otherManifold)$, 
and notice that $\Phi^{\ast}{u} \in \Lebesgue^{1}_{\loc}\Lambda^{k}(\Manifold)$. 
To simplify the notation, the pullback along the inverse $\Phi^{-1}$ is written $\Phi^{-\ast}$. 

We write $\Jacobian\Phi$ for the Jacobian of $\Phi : \Manifold \rightarrow \otherManifold$, 
which has a pointwise norm naturally induced by the Riemannian metric.
Technically, we can identify $\Jacobian\Phi$ with a smooth tensor field over $\Manifold$ that carries a natural pointwise norm.
We also recall a consequence of Hadamard's inequality, which is:
\begin{align}\label{math:hadamard}
    \| \det \Jacobian\Phi^{-1} \|_{\Lebesgue^{\infty}(\otherManifold,\metric)}
    \leq 
    \| \Jacobian\Phi^{-1} \|^{n}_{\Lebesgue^{\infty}(\otherManifold,\metric)}
    .
\end{align}
We frequently use the following important transformation estimate.

\begin{theorem}\label{theorem:transformationssatz}
    Let $p \in [1,\infty]$ and $k \geq 0$. For any ${u} \in \Lebesgue^{p}\Lambda^{k}(\otherManifold,\metric)$,
    \begin{align}
        \label{math:transformationssatz:lp}
        \| \Phi^{\ast} {u} \|_{\Lebesgue^{p}\Lambda^{k}(\Manifold,\metric)}
        &\leq 
        \| \Jacobian\Phi      \|^{k        }_{\Lebesgue^{\infty}(\Manifold,\metric)}
        \| \det \Jacobian\Phi^{-1} \|^{\frac 1 p}_{\Lebesgue^{\infty}(\otherManifold,\metric)}
        \| {u} \|_{\Lebesgue^{p}\Lambda^{k}(\otherManifold,\metric)}
        .
    \end{align}
    For any $m \geq 0$ 
    there exists $\Ctrafosatz{\Phi}{m}{m}{p}{n} > 0$, depending only on $\Phi$, $m$, $p$, and $n$, for all ${u} \in \Sobolev^{m,p}\Lambda^{k}(\otherManifold)$:
    \begin{align}\label{math:transformationssatz:sobolev}
        \| \Phi^{\ast} {u} \|_{\Sobolev^{m,p}\Lambda^{k}(\Manifold,\metric)}
        &\leq 
        \Ctrafosatz{\Phi}{m}{m}{p}{n}
        \| \Jacobian\Phi      \|^{m+k        }_{\Sobolev^{m,\infty}(\Manifold,\metric)}
        \| \det \Jacobian\Phi^{-1} \|^{\frac 1 p}_{\Lebesgue^{\infty}(\otherManifold,\metric)}
        \| {u} \|_{\Sobolev^{m,p}\Lambda^{k}(\otherManifold,\metric)}
        .
    \end{align}
    If ${u} \in \Cont\Lambda^{k}(\otherManifold,\metric)$, then 
    $\Phi^{\ast} {u}$ is continuous and 
    \begin{align}
        \label{math:transformationssatz:continuous}
        | \Phi^{\ast} {u}_{|x} |_{\metric}
        &\leq 
        | \Jacobian\Phi_{|x} |_{\metric}^{k}
        | {u}_{|\Phi(x)} |_{\metric}
        ,
        \quad 
        x \in \Manifold
        .
    \end{align}
\end{theorem}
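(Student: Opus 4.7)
The plan is to handle the three estimates from the pointwise level first and then integrate, with the Sobolev estimate reducing to a Leibniz-type bookkeeping exercise on covariant derivatives of pullbacks.

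First I would establish the pointwise bound \eqref{math:transformationssatz:continuous}. At each $x \in \Manifold$, the pullback $\Phi^{\ast}{u}_{|x}$ is obtained by applying the $k$-th exterior power $\Lambda^{k}(\Jacobian\Phi_{|x})^{\ast}$ to ${u}_{|\Phi(x)}$. Since the operator norm of the $k$-th exterior power of a linear map is at most the $k$-th power of the operator norm of the map itself (this is a standard consequence of the Hadamard-type inequality already invoked in \eqref{math:hadamard}), we immediately obtain $|\Phi^{\ast}{u}_{|x}|_{\metric} \leq |\Jacobian\Phi_{|x}|_{\metric}^{k} |{u}_{|\Phi(x)}|_{\metric}$. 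Continuity is preserved since $\Phi$ is a diffeomorphism.

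Next I would deduce the $\Lebesgue^{p}$ bound \eqref{math:transformationssatz:lp} by raising the pointwise bound to the $p$-th power and integrating. The key is the change of variables formula for the Riemannian volume form: $\int_{\Manifold} (f \circ \Phi)\, \vol_{\metric} = \int_{\otherManifold} f \, |\det \Jacobian\Phi^{-1}| \, \vol_{\metric}$ for any nonnegative measurable $f$ on $\otherManifold$. Applying this with $f = |{u}|_{\metric}^{p}$ and pulling out the supremum of $|\Jacobian\Phi|_{\metric}^{k}$ and of $|\det \Jacobian\Phi^{-1}|$ yields the desired bound after taking the $p$-th root. The case $p = \infty$ is immediate from the pointwise bound.

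The Sobolev estimate \eqref{math:transformationssatz:sobolev} is the main obstacle, since it requires controlling $\nabla^{l}(\Phi^{\ast}{u})$ for $0 \leq l \leq m$. The core observation is a Faà di Bruno--type identity for the iterated covariant derivative of a pullback:
\begin{align*}
    \nabla^{l}\left( \Phi^{\ast} {u} \right)
    =
    \sum_{j=0}^{l} \sum_{\alpha \in A_{l,j}} P_{\alpha}\!\left( \Jacobian\Phi, \nabla^{2}\Phi, \dots, \nabla^{l-j+1}\Phi \right) \cdot \Phi^{\ast}\left( \nabla^{j} {u} \right),
\end{align*}
where each $P_{\alpha}$ is a universal polynomial contraction involving a total of exactly $k+l-j$ factors drawn from covariant derivatives of $\Phi$ of orders $\leq l-j+1$. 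I would prove this by induction on $l$, starting from the base case $l=0$ (which is \eqref{math:transformationssatz:continuous}) and differentiating once to add either another factor of a derivative of $\Phi$ or an increment in the order of $\nabla^{j}{u}$. Taking the pointwise $\metric$-norm, each term is bounded by $\|\Jacobian\Phi\|_{\Sobolev^{m,\infty}(\Manifold,\metric)}^{m+k-j} \, |\Phi^{\ast}\nabla^{j}{u}|_{\metric}$, and summing over the finitely many multi-indices $\alpha$ contributes a combinatorial constant depending only on $m$, $p$, $n$.

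Finally I would apply the already-established $\Lebesgue^{p}$ bound \eqref{math:transformationssatz:lp} to each pullback $\Phi^{\ast} \nabla^{j} {u}$ (viewed as a tensor field, for which the same pointwise exterior-power argument applies, now with an exponent depending on the covariant degree), and sum over $0 \leq l \leq m$. Absorbing the combinatorial constant and the $\| \Jacobian\Phi \|_{\Sobolev^{m,\infty}(\Manifold,\metric)}^{m+k}$ factor yields \eqref{math:transformationssatz:sobolev} with $\Ctrafosatz{\Phi}{m}{m}{p}{n}$ depending only on $m$, $p$, $n$ (and implicitly on $\Phi$ through the structure of the multi-index sums, though in fact the $\Phi$-dependence is entirely absorbed into the explicit Sobolev norm of $\Jacobian\Phi$). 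The main technical point to be careful with is that in the induction step, the covariant derivative acting on a pullback produces both a factor involving a derivative of $\Phi$ (through the second fundamental form/Christoffel correction) and a differentiated pullback, so the bookkeeping of powers needs to match the $m+k$ exponent exactly.
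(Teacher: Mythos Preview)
The paper states this theorem without proof, treating it as a standard transformation estimate; there is no argument in the paper to compare against. Your approach is the natural one and is essentially correct: the pointwise bound \eqref{math:transformationssatz:continuous} via the operator norm of $\Lambda^{k}(\Jacobian\Phi_{|x})^{\ast}$, integration against the Riemannian change-of-variables formula for \eqref{math:transformationssatz:lp}, and a Fa\`a di Bruno expansion of $\nabla^{l}(\Phi^{\ast}u)$ for \eqref{math:transformationssatz:sobolev}. The paper itself invokes exactly this kind of higher-order chain rule later (see the proof of Lemma~\ref{lemma:smoothing:retainsregularity}, which cites a generalized chain rule for pullbacks of tensors), so your strategy is fully in line with the paper's toolkit.

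One point deserves more care than your sketch suggests. The covariant derivatives on $\Manifold$ and $\otherManifold$ come from \emph{different} Levi-Civita connections, and $\nabla_{\Manifold}(\Phi^{\ast}u)$ is not simply $\Phi^{\ast}(\nabla_{\otherManifold}u)$ contracted with $\Jacobian\Phi$: the discrepancy involves the difference tensor between $\nabla_{\Manifold}$ and the pullback connection $\Phi^{\ast}\nabla_{\otherManifold}$, which in coordinates brings in derivatives of the metric and hence of $\Jacobian\Phi^{-1}$ as well as $\Jacobian\Phi$. Your parenthetical claim that ``the $\Phi$-dependence is entirely absorbed into the explicit Sobolev norm of $\Jacobian\Phi$'' is therefore too optimistic; the constant $\Ctrafosatz{\Phi}{m}{m}{p}{n}$ may genuinely depend on $\Phi$ through quantities like $\|\Jacobian\Phi^{-1}\|_{\Sobolev^{m,\infty}}$ that are not displayed in \eqref{math:transformationssatz:sobolev}. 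This is consistent with the theorem as stated (which allows dependence on $\Phi$), so it is not a gap in proving the theorem, only in your sharper side-claim.
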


\begin{remark}
    We can now complete our understanding of the transformation of Lebesgue spaces and Sobolev spaces:
    while local integrability is preserved along any diffeomorphism, 
    the Lebesgue and Sobolev spaces are preserved if the transformation is bi-Lipschitz. 
\end{remark}

\begin{remark}
    Throughout this entire section, the Riemannian metric has always been reflected in the definition of various Banach and Hilbert spaces and their norms and scalar products.
    The metric over Euclidean space is also denoted by $\metric$.
    But will we drop the metric from the notation when working within Euclidean spaces, or whenever there is no danger of ambiguity. 
\end{remark}

\section{Smooth triangulations}\label{sec:triangulations}

This section reviews smooth triangulations of manifolds. 
We emphasize that these triangulations are defined intrinsically over the manifold. 

We begin with basic definitions. 
The \emph{reference $\Dim$-simplex}, $\Dim \in \bbN_0$,
is the closed simplex $\Delta_{\Dim} \subseteq \bbR^{\Dim}$
that is the convex combination of the origin and the $\Dim$ 
canonical coordinate vectors of $\bbR^{\Dim}$.
For every subset $I \subseteq \{0,\dots,\Dim\}$ of $l+1$ indices, which represent a subset of the vertices of $T$,
we consider the affine mapping $\incl_{I,\Dim} : \Delta_{l} \rightarrow \Delta_{\Dim}$
whose image contains exactly the vertices designated by the set $I$. 

A \emph{smooth $\Dim$-simplex} in $\Manifold$ is a smooth embedding $\incl_{T} : \Delta_{\Dim} \rightarrow \Manifold$ whose image is the set $T \subset \Manifold$. 
The \emph{vertices of $T$} are the images of the origin and the $d$ canonical unit base vectors.
If $\incl_S : \Delta_{l} \rightarrow \Manifold$ is a smooth simplex such that $\incl_S = \incl_T \circ \incl_{I,\Dim}$ for some index set $I \subseteq \{0,\dots,\Dim\}$,
then we call $S$ a \emph{smooth subsimplex} of $T$. 
We also let $\incl_{S,T} : S \rightarrow T$ denote the canonical smooth embedding of $S$ into $T$.

\begin{remark}
    we frequently identify every embedding $\incl_T$ with its image whenever there is no danger of ambiguity.
    Nevertheless, we stress that two smooth simplices with the same image are not necessarily the same. 
\end{remark}

A \emph{smooth triangulation} of $\Manifold$ is a collection $\Mesh$ of smooth simplices 
such that 
\begin{enumerate}
\item
 Every point of $\Manifold$ is contained in at least one $n$-simplex of $\Mesh$, 
 \item
 for all $T \in \Mesh$ and all subsimplices $S$ of $T$, we have $S \in \Mesh$, 
 \item 
 and such that for all $T_1, T_2 \in \Mesh$
 the intersection $T_1 \cap T_2$ is either empty or a common subsimplex 
 of both $T_1$ and $T_2$.
\end{enumerate}
Note that every triangulation is already described by its set of smooth $n$-simplices,
and that smooth triangulations of compact manifolds are always finite. 

\subsection{Metric features and meshsize functions}
We now study the metric properties of triangulations and hence assume that $\Manifold$ is equipped with a Riemannian metric $\metric$. In what follows, the reference simplices $\Delta_{\Dim}$ are always equipped with the Euclidean metric.

Let $\Mesh$ be a smooth triangulation of $\Manifold$.
The $\metric$-diameter of each $T \in \Mesh$ is written $\diam_{\metric}(T)$.
If $T \in \Mesh$ with $\dim(T) > 0$, then write $h_T := \diam_{\metric}(T)$. 
If $V \in \Mesh^{0}$ is a vertex, then we formally define $h_V$ as the smallest $h_T$ for any $\Dim$-simplex $T$ that includes $V$. 

We study the regularity of triangulations in terms of numerous quantities. 
If $\incl_{T} : \Delta_{\Dim} \rightarrow T$ is any $\Dim$-simplex, then
\begin{gather*}
 h_{T} \leq \diam(\Delta_{\Dim}) \| \Jacobian\incl_{T}      \|_{\Lebesgue^{\infty}(\Delta_{\Dim},\metric)},
 \quad 
 \diam(\Delta_{\Dim}) \cdot h_{T}^{-1} \leq \| \Jacobian\incl_{T}^{-1} \|_{\Lebesgue^{\infty}(T,\metric)}.
\end{gather*}
The \emph{shape constant} $\shapeconsimplexONE{T} > 0$ of a smooth $\Dim$-simplex $\incl_{T} : \Delta_{\Dim} \rightarrow \Manifold$ is the smallest constant satisfying
\begin{gather*}
 \| \Jacobian\incl_T \|_{\Cont^{0}(\Delta_{d},\metric)} 
 \leq
 \shapeconsimplexONE{T} 
 h_{T},
 \quad 
 \| \Jacobian\incl_T^{\inv} \|_{\Cont^{0}(T,\metric)} 
 \leq
 \shapeconsimplexONE{T} 
 h_{T}^{-1},
\end{gather*}
The \emph{shape constant} of the triangulation is $\shapeconsimplexONE{\Mesh} := \shapeconsimplex{1}{\Mesh}$. 
Intuitively, the shape constant measures the ``quality'' of a triangulation; we pull back the Riemannian metric $\metric$ along the embedding of every simplex and compare the result with the local Euclidean metric. 

When $m \geq 1$ and $\incl_{T} : \Delta_{\Dim} \rightarrow \Manifold$ is a smooth $\Dim$-simplex,
then $\shapeconsimplex{m}{T}$ denotes the smallest constant for which
\begin{gather*}
    \left\| \incl_T \right\|_{\Cont^{m}(\Delta_{d},\metric)} 
    \leq
    \shapeconsimplex{m}{T} 
    h_{T}^{ m}
,
\end{gather*}
and we write $\shapeconsimplex{m}{\Mesh} := \max_{ T \in \Mesh } \shapeconsimplex{m}{T}$. 

In addition, we let $\shapeconadjacency \geq 1$ be an integer constant that bounds how many simplices are adjacent\footnote{We call two simplices \emph{adjacent} if they intersect.} to any given simplex of $\Mesh$,
and we let $\shapeconquasiuniform \geq 1$ be a constant such that for every $T \in \Mesh$ and every subsimplex $S \subseteq T$ we have $h_T \leq \shapeconquasiuniform h_S$. 

\begin{remark}
    The quantities $\shapeconadjacency$ and $\shapeconquasiuniform$ can be bounded in terms of the shape constant of the triangulation 
    when working over domains in Euclidean space (see~\cite{licht2017thesis}).
    This no longer holds when working over manifolds: then the curvature needs to be taken into account. 
\end{remark}

\begin{lemma}\label{lemma:neighborhood}
    There exists $\neighborconstant > 0$ such that for all simplices $T \in \Mesh$ we have the inclusion of balls
    \begin{align*} 
        \Ball_{\neighborconstant h_T}(T,\metric) \subseteq T^{\ast}. 
    \end{align*}
    The constant $\neighborconstant$ depends only on $\shapeconsimplexONE{\Mesh}$, $\shapeconquasiuniform$, and $n$.
\end{lemma}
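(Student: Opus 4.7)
The plan is to argue by contradiction: suppose that $x \in \Manifold \setminus T^{\ast}$ with $\dist_{\metric}(x, T) < \eta h_T$ for an $\eta > 0$ to be chosen, and take a rectifiable path $\gamma : [0,1] \to \Manifold$ from some $y \in T$ to $x$ of length less than $\eta h_T$. I aim to derive a contradictory lower bound $L_{\metric}(\gamma) \geq c h_T$, where $c > 0$ depends only on $\shapeconsimplexONE{\Mesh}$, $\shapeconquasiuniform$, and $n$. The representative case is $\dim T = n$; the case $\dim T < n$ then follows by applying the result to an $n$-simplex in $T^{\ast}$ containing $T$, together with $\shapeconquasiuniform$-quasi-uniformity so that $h_T$ and the mesh size of the enclosing $n$-simplex are comparable.

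Let $t^{\ast} := \inf\{ t \in [0,1] : \gamma(t) \notin T^{\ast}\}$. Then $p := \gamma(t^{\ast}) \in \partial T^{\ast}$, so $p$ lies on a proper face $F$ of some $n$-simplex $T' \in T^{\ast}$ which is shared with an $n$-simplex outside $T^{\ast}$; in particular $T' \cap T \neq \emptyset$ while $F \cap T = \emptyset$. Pulling back through the chart $\incl_{T'}^{-1} : T' \to \Delta_{n}$, the preimages $\widehat{T \cap T'}$ and $\widehat{F}$ are disjoint closed sub-simplices of $\Delta_{n}$, so by compactness their Euclidean distance is bounded below by a purely dimensional constant $c_{n} > 0$. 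The shape bound on $\incl_{T'}$ then yields
\begin{align*}
    L_{\metric}(\sigma) \;\geq\; \frac{h_{T'}}{\shapeconsimplexONE{T'}} \, L_{\mathrm{Eucl}}\!\left(\incl_{T'}^{-1} \circ \sigma\right)
\end{align*}
for every rectifiable path $\sigma$ contained in $T'$, while $\shapeconquasiuniform$-quasi-uniformity yields $h_{T'} \geq h_T / \shapeconquasiuniform$. If $\gamma|_{[0,t^{\ast}]}$ happens to lie entirely in $T'$, these two ingredients combine directly to give the required contradiction upon choosing $\neighborconstant$ strictly smaller than $c_n / (\shapeconsimplexONE{\Mesh} \shapeconquasiuniform)$.

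The main obstacle is that $\gamma|_{[0,t^{\ast}]}$ need not remain in the single $n$-simplex $T'$: it may wander through several $n$-simplices of $T^{\ast}$ before reaching $F$. To handle this I would decompose the sub-arc into finitely many maximal pieces, each contained in one $n$-simplex of $T^{\ast}$, pull each piece back to $\Delta_n$ via its associated chart, and concatenate these Euclidean preimages along the $(n-1)$-faces at which $\gamma$ crosses between neighbouring simplices. The result is a polygonal path in the abstract simplicial star of $T$ which starts at a point of $T$ (inside the first simplex visited) and ends at $p \in F$ (inside $T'$). A combinatorial argument internal to this abstract star --- essentially the Euclidean analogue of the present lemma applied to the reference simplicial complex --- gives a uniform dimensional lower bound on the total Euclidean length of any such chain, and converting back through the shape and quasi-uniformity bounds produces $L_{\metric}(\gamma) \geq c h_T$ as required. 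The delicate point is precisely this combinatorial Euclidean estimate: one must verify that no concatenation of sub-arcs, no matter how $\gamma$ weaves among the simplices of $T^{\ast}$, can join a point of $T$ to a face $F$ with $F \cap T = \emptyset$ using total Euclidean length below a dimensional threshold.
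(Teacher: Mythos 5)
Your framework --- argue by contradiction, track the first exit time $t^{\ast}$ from $T^\ast$, transfer to the reference simplex, control via $\shapeconsimplexONE{\Mesh}$ and $\shapeconquasiuniform$ --- is sound, and you are right that the single-simplex estimate alone does not close the argument, because $\gamma|_{[0,t^\ast]}$ may wander through several simplices of the patch. But the proposal does not actually prove the claim: the ``combinatorial Euclidean estimate'' on the abstract star, which you correctly identify as the crux, is left unverified, so the key step is missing. There is also a genuine error in the dimension reduction: if $\tilde T \supseteq T$ is an enclosing $n$-simplex, then $T^\ast \subseteq \tilde T^\ast$ (more simplices meet $\tilde T$ than meet $T$), so $\Ball_{\neighborconstant h_{\tilde T}}(\tilde T,\metric) \subseteq \tilde T^\ast$ is \emph{weaker} than $\Ball_{\neighborconstant' h_T}(T,\metric) \subseteq T^\ast$ and does not yield it; the argument must treat arbitrary $\dim T$ directly. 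Lastly, a rectifiable path need not decompose into finitely many maximal sub-arcs, one per simplex, so the proposed decomposition requires more care than stated.

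A barrier function closes all three gaps at once and handles every $\dim T$ uniformly. Define $\rho \colon T^\ast \to [0,1]$ on each $n$-simplex $T' \subseteq T^\ast$ by pulling back to $\Delta_n$ via $\incl_{T'}^{-1}$ and taking the sum of those barycentric coordinates of $\Delta_n$ indexed by the vertices of the common face $T \cap T'$. These local definitions agree on shared facets (both restrict there to the sum over the vertices of $T$ lying on that facet), so $\rho$ is a single continuous, piecewise-affine function on $T^\ast$ with $\rho\equiv 1$ on $T$ and $\rho\equiv 0$ on every face of a patch simplex disjoint from $T$, hence on $\partial T^\ast$. The Euclidean gradient of $\rho\circ\incl_{T'}$ on $\Delta_n$ has norm at most $\sqrt n$, so $|\nabla\rho|_\metric \leq \sqrt n\,\shapeconsimplexONE{\Mesh}\, h_{T'}^{-1} \leq \sqrt n\,\shapeconsimplexONE{\Mesh}\,\shapeconquasiuniform\, h_T^{-1}$ on each $T'$. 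Any rectifiable path in $\Manifold$ from a point of $T$ to a point outside $T^\ast$ must first hit $\partial T^\ast$; along that initial segment, contained in $T^\ast$, the value of $\rho$ drops from $1$ to $0$, so its $\metric$-length is at least $h_T/(\sqrt n\,\shapeconsimplexONE{\Mesh}\,\shapeconquasiuniform)$. Choosing $\neighborconstant$ strictly below that quantity gives the lemma with exactly the stated dependence and no reference to $\shapeconadjacency$; the factor $1/\sqrt n$ plays the role of the paper's reference-simplex constant $\tilde\neighborconstant$, and the barrier makes quantitative the multi-simplex step that your decomposition leaves open.
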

\begin{proof}
    There exists $\tilde\neighborconstant > 0$ with the following property:
    whenever $\hat S \subseteq \Delta_{n}$ is a subsimplex of the reference simplex, 
    then $\overline{\Ball_{\tilde\neighborconstant}(\hat S)}$ does not touch the subsimplex of $\Delta_{n}$
    spanned by those vertices of $\Delta_n$ not contained in $\hat S$. 
    One easily sees that $0.99 / \sqrt{n}$ is a possible value of $\tilde\neighborconstant$. 
    
    We fix any $T \in \calT$. 
    Let $T' \in \calT$ be an $n$-dimensional simplex adjacent to $T$ and let $S = T \cap T'$ be their common subsimplex. 
    There exists a subsimplex $\hat S \subseteq \Delta_{n}$ of the $n$-dimensional reference simplex
    such that $\incl_{T'}(\hat S) = S$. 
    It suffices to find $\neighborconstant > 0$ such that 
    $T' \cap \overline{\Ball_{\neighborconstant h_T}(S)}$ is contained within the image of $\overline{\Ball_{\tilde\neighborconstant}(S)}$ under the embedding $\incl_{T'}$. 
For that, it suffices that $\Lip(\incl_{T'}^{-1}) \neighborconstant h_T \leq \tilde\neighborconstant$. We estimate 
    $\Lip(\incl_{T'}^{-1}) \neighborconstant h_T 
        \leq 
        \shapeconsimplexONE{T'} h_{T'}^{-1} \neighborconstant h_T
        \leq 
        \shapeconsimplexONE{T'} \neighborconstant \shapeconquasiuniform$.
    We can therefore pick any $\neighborconstant \leq \shapeconsimplexONE{T'}^{-1} \shapeconquasiuniform^{-1} \tilde\neighborconstant$.    
\end{proof}

We now discuss smooth meshsize functions. 

\begin{theorem}\label{theorem:meshfunction}
 There exists a positive smooth function $\meshfunc : \Manifold \rightarrow \bbR$
 and $\Cmsf, C_{h,L} > 0$
 such that 
 \begin{gather*}
  \Cmsf^{-1} h_T \leq \meshfunc(x) \leq \Cmsf h_T \quad\text{ for all }\quad T \in \Mesh \quad\text{ and for all }\quad x \in T,
\\
\sup_{ \substack{ x, y \in \Manifold \\ x \neq y } }
  \frac{ \left| \meshfunc(x) - \meshfunc(y) \right| }{ d_{\metric}(x,y)}
  \leq 
  C_{h,L}
  .
 \end{gather*}
 We can choose $\Cmsf = 1.01\shapeconquasiuniform$ and $C_{h,L} = 1.01\shapeconsimplexONE{\Mesh}^{2}(\shapeconquasiuniform - 1/\shapeconquasiuniform)$.
\end{theorem}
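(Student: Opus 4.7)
The plan is to construct $\meshfunc$ as a controlled smoothing of a continuous piecewise-affine ``skeleton'' mesh-size function obtained by interpolating the vertex values $h_V$ via barycentric coordinates on the top-dimensional simplices.

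First, on every $n$-simplex $T \in \Mesh$ with vertices $V_0, \dots, V_n$, I would set
$$\tilde\meshfunc_{|T}(x) := \sum_{i=0}^{n} h_{V_i}\, \lambda_i\!\left(\incl_T^{-1}(x)\right),$$
where $\lambda_0, \dots, \lambda_n$ are the standard barycentric coordinates on $\Delta_n$. Since the restriction of barycentric coordinates to a shared subsimplex is independent of the ambient top-dimensional simplex, these local definitions patch into a globally continuous function $\tilde\meshfunc$ on $\Manifold$.

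Next, I would establish the two-sided bound and a Lipschitz estimate for $\tilde\meshfunc$. For any vertex $V$ of an $n$-simplex $T$, the subsimplex inclusion $V \subseteq T$ combined with the quasi-uniformity constant gives $h_T \le \shapeconquasiuniform h_V$, while the min-definition of $h_V$ gives $h_V \le h_T$, so every $h_{V_i}$ lies in $[\shapeconquasiuniform^{-1} h_T,\, h_T]$, and as a convex combination of these values so does $\tilde\meshfunc_{|T}(x)$. For the Lipschitz estimate on a single $T$, I rewrite $\tilde\meshfunc_{|T} - h_{V_0}\bboldone_{T} = \sum_{i\ge 1}(h_{V_i}-h_{V_0})\,\lambda_i\circ\incl_T^{-1}$, bound the coefficient differences by the sandwich of the previous step, bound the gradient using $\|\Jacobian \incl_T^{-1}\|_{\Lebesgue^{\infty}(T,\metric)} \le \shapeconsimplexONE{T} h_T^{-1}$, and note that the powers of $h_T$ cancel, producing a gradient bound depending only on $\shapeconsimplexONE{\Mesh}$, $\shapeconquasiuniform$, and $n$. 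Continuity of $\tilde\meshfunc$ across simplex boundaries then upgrades this piecewise pointwise gradient bound to a global Lipschitz bound by integration along a minimizing geodesic.

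Finally, $\tilde\meshfunc$ is only continuous. To produce the required smooth $\meshfunc$, I would mollify chart-by-chart through a smooth partition of unity adapted to the triangulation, with a mollification radius chosen proportional to $\tilde\meshfunc$ itself with a sufficiently small constant of proportionality. Because $\tilde\meshfunc$ is already Lipschitz and positive, this yields a smooth positive function $\meshfunc$ that is $\Cont^{0}$-close to $\tilde\meshfunc$ and whose Lipschitz constant is arbitrarily close to that of $\tilde\meshfunc$; taking the proportionality constant small enough absorbs the deterioration into the multiplicative factor $1.01$ in the two quoted constants.

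The main obstacle is the smoothing step. A curved manifold offers no global convolution, so the mollification must be carried out locally through a partition of unity while simultaneously keeping the mollification radius comparable to $h_T$ uniformly across the manifold, preserving the pointwise sandwich bound at every point, and inflating the Lipschitz constant by no more than the factor $1.01$. The delicate part is verifying that all ancillary constructions (the partition of unity, the chart cutoff radii, the scale of the mollifier relative to $\tilde\meshfunc$) can be chosen to depend only on $\shapeconsimplexONE{\Mesh}$, $\shapeconquasiuniform$, and $n$, so that the stated constants hold uniformly.
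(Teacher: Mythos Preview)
Your construction of the piecewise-affine skeleton $\tilde\meshfunc$ via barycentric interpolation of the vertex values $h_V$, together with the two-sided bound and the local Lipschitz estimate, is exactly what the paper does (the paper phrases it as ``$\incl_T^\ast\meshfunc^\ast$ is affine'', which is the same thing). The only substantive difference is the smoothing step: the paper does not carry out a chart-by-chart mollification with partition of unity, but instead invokes a ready-made theorem of Azagra, Ferrera, L\'opez-Mesas, and Rangel which guarantees that any Lipschitz function on a Riemannian manifold can be uniformly approximated by a smooth function whose Lipschitz constant is within $\epsilon$ of the original. That black-box dispatches your ``main obstacle'' in one line and avoids the delicate bookkeeping you anticipate; in particular it sidesteps the issue that a partition-of-unity mollification with radius proportional to $\tilde\meshfunc$ picks up gradient contributions from $\nabla\rho_i$ that are not automatically small unless the mesh is globally fine relative to the fixed atlas.
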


\begin{proof}
 First, we define $\meshfunc^{\ast} : \Manifold \rightarrow \bbR$ such that $\meshfunc^\ast(v) = h_V$ for each vertex $v \in \Mesh^{0}$
 and requiring that over each $T \in \Mesh$ the pullback $\incl_{T}^{\ast} \meshfunc^{\ast}$ is an affine function. 
 It follows from the definition of smooth triangulations that $\meshfunc^{\ast}$ is continuous. By construction,
 \begin{align*}
  \shapeconquasiuniform^{-1} h_T \leq \meshfunc^{\ast}(x) \leq \shapeconquasiuniform h_T \quad\text{ for all }\quad T \in \Mesh \quad\text{ and for all }\quad x \in T. 
\end{align*}
 The Lipschitz constant of this function is at most $\shapeconsimplexONE{T}^{2}(\shapeconquasiuniform - 1/\shapeconquasiuniform)$ on each triangle $T \in \calT$. 
 We conclude that this is also the Lipschitz constant of $\meshfunc^{\ast}$ over $\Manifold$ with respect to the $\metric$-path metric with the same global Lipschitz constant.
We pick $\epsilon > 0$ small enough and use~\cite[Theorem~1]{azagra2007smooth} 
 to approximate $\meshfunc^{\ast}$ by a smooth function $\meshfunc$
 for which we have 
 \begin{gather*}
  \sup_{ \substack{ x \in \Manifold } }
  | \meshfunc^{\ast}(x) - \meshfunc(x) | \leq \epsilon, 
  \qquad
\sup_{ \substack{ x, y \in \Manifold \\ x \neq y } }
  \frac{ \left| \meshfunc(x) - \meshfunc(y) \right| }{ d_{\metric}(x,y)}
  \leq 
  \sup_{ \substack{ x, y \in \Manifold \\ x \neq y } }
  \frac{ \left| \meshfunc^{\ast}(x) - \meshfunc^{\ast}(y) \right| }{ d_{\metric}(x,y)}
  +
  \epsilon 
 \end{gather*}
That proves the desired result.
\end{proof}

\begin{remark}\label{remark:summaryofmeshquantities}
    In summary, all upper bounds discussed in this section can be expressed in terms of the quantities 
    $\shapeconsimplexONE{\Mesh}$, $\shapeconquasiuniform$, $\shapeconadjacency$, and $n$. 
    The constant $\shapeconsimplex{m}{\Mesh}$ will enter estimates later, where $m$ depends on the application. 
    How to define the triangulation $\calT$ in applications while keeping control the relevant parameters is discussed in a later section.
\end{remark}

\subsection{Reference patches}
We will need transformations from and to reference patches in our subsequent discussion. 
We let $T^{\ast}$ denote the local patch around any simplex $T \in \Mesh$, that is, the union of all simplices adjacent to $T$.
The local patch $T^{\ast}$ is naturally equipped with a triangulation $\Mesh(T)$ in the obvious manner. 
We need the following technical observation,
which provides transformations of the local patches, to be used scaling arguments. 

\begin{lemma}\label{lemma:patchreferences}
    Given a triangulation $\Mesh$, 
    there exists a number $\patchnumber$, only depending on $n$ and $\shapeconadjacency$, 
    and bounded domains $\hat T^\ast_{1}, \hat T^\ast_{2}, \dots, \hat T^\ast_{\patchnumber} \subset \bbR^{n}$ 
    with respective affine triangulations 
    $\Mesh(\hat T_{1}), \Mesh(\hat T_{2}), \dots, \Mesh(\hat T_{\patchnumber})$
    with the following property.
    
    For every $T \in \Mesh$ there exists $1 \leq i \leq \patchnumber$ 
    and a bi-Lipschitz mapping 
    \begin{align}
        \patchincl_{T} : \hat T^\ast_{i} \rightarrow T^{\star}
    \end{align}
    such that for every $\Dim$-simplex $\hat T' \in \Mesh(\hat T_{i})$ there exists $T' \in \Mesh(T_{i})$ with  
    \begin{align}
        \patchincl_{T|\hat T'} = \incl_{T'} \incl_{\hat T'}^{-1}
    \end{align}
\end{lemma}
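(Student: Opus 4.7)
The plan is to exploit the bounded combinatorial complexity of local patches. By definition of $\shapeconadjacency$, each $\Mesh(T)$ contains at most $\shapeconadjacency$ many $\Dim$-simplices, and therefore at most $2^{\Dim+1}\shapeconadjacency$ simplices and at most $(\Dim+1)\shapeconadjacency$ vertices in total. Viewing $\Mesh(T)$ as an abstract simplicial complex with a distinguished central $\Dim$-simplex, only finitely many isomorphism classes arise; their count is some $\patchnumber = \patchnumber(n,\shapeconadjacency)$. For each class $i \in \{1,\dots,\patchnumber\}$ I would fix once and for all a reference patch $\hat T_i^\ast \subset \bbR^n$ with an affine triangulation $\Mesh(\hat T_i)$ realizing that class, a designated central $\Dim$-simplex $\hat T_i$, and a fixed labeling of its vertices.

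For an arbitrary $T \in \Mesh$, let $i$ be the combinatorial class of $\Mesh(T)$ and fix an isomorphism of abstract simplicial complexes $\Phi_T : \Mesh(\hat T_i) \to \Mesh(T)$ sending $\hat T_i$ to $T$ and respecting the fixed vertex labeling. I would then define $\patchincl_T : \hat T_i^\ast \to T^\star$ piecewise: on each reference $\Dim$-simplex $\hat T' \in \Mesh(\hat T_i)$ set $\patchincl_T$ equal to $\incl_{\Phi_T(\hat T')} \circ \incl_{\hat T'}^{-1}$. Consistency on shared subsimplices follows because $\Phi_T$ preserves the subsimplex relation and the vertex correspondence, so two local definitions on neighboring reference simplices agree on their common face. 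On each reference simplex the map $\patchincl_T$ is a composition of a smooth diffeomorphism with the bi-Lipschitz inverse of an affine embedding, hence bi-Lipschitz; by compactness of $\hat T_i^\ast$ and continuity across faces one obtains a global bi-Lipschitz homeomorphism.

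The main obstacle is producing the affine realization $\hat T_i^\ast \subset \bbR^n$ for each combinatorial class, since not every abstract simplicial complex admits an affine embedding in $\bbR^n$. This is where one uses that $\Manifold$ is a genuine $n$-manifold: each class here occurs as a neighborhood of a simplex in $\Manifold$, so it is PL-homeomorphic to an open subset of $\bbR^n$. Concretely, one may take any representative $T^\ast$ of the given class, pull it through a smooth coordinate chart of $\Manifold$ defined on an open neighborhood of $T^\ast$, and replace the images of the curved simplices by their straight-line counterparts with vertices at the chart coordinates of the true vertices. Since those points are in general position in $\bbR^n$ (as the chart is a diffeomorphism onto its image), this produces a bona fide affine simplicial complex, which one fixes as $\hat T_i^\ast$. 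The uniform bi-Lipschitz constant in the conclusion is not claimed by the statement and is therefore not an issue here; the constants implicit in ``bi-Lipschitz'' depend on the individual simplex $T$.
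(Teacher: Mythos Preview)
Your approach is essentially the same as the paper's: enumerate the finitely many combinatorial types of local patches (bounded by $n$ and $\shapeconadjacency$), fix one affine reference patch per type, and define $\patchincl_T$ simplex-by-simplex via $\incl_{T'}\circ\incl_{\hat T'}^{-1}$. You supply considerably more detail than the paper does---in particular you worry about consistency across shared faces and about the existence of an affine realization in $\bbR^n$, both of which the paper simply takes for granted---so your argument is at least as complete; the coordinate-chart-and-straightening construction you sketch for the reference patches would need a little more care (a single chart need not cover an arbitrary $T^\ast$, and straightening vertices does not automatically yield a valid simplicial complex), but that is a point the paper does not address either.
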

\begin{proof}
    The local patches $T^{\star}$ have a only finitely-many combinatorial structures, 
    the number $\patchnumber$ of which is bounded in terms of $\shapeconadjacency$ and $n$. 
    Correspondingly, we may fix domains 
    $\hat T^\ast_{1},\dots,\hat T^\ast_{\patchnumber}$
    with respective affine triangulations 
    $\Mesh(\hat T_{1}),\dots,\Mesh(\hat T_{\patchnumber})$
    that represent those combinatorial structures. 
    
    Let $T \in \Mesh$ and suppose that its local patch has the combinatorial structure with index $i$.
    Thus, for every $T' \in \Mesh(T)$ there exists a unique corresponding simplex $\hat T' \in \Mesh(\hat T_{i})$. 
    We define $\patchincl_{T} : \hat T^\ast_{1} \rightarrow T^{\star}$ by setting 
    \[
        \patchincl_{T|\hat T'} = \incl_{T'} \incl_{\hat T'}^{-1}
    \]
    for $\hat T' \in \Mesh(\hat T_{i})$ that corresponds to $T' \in \Mesh(T)$.  
    The desired result follows. 
\end{proof}

\section{Finite element systems}\label{sec:finiteelementspaces}

This section describes finite element differential forms over intrinsic triangulations of manifolds. 
The basic idea is to define finite element spaces on reference simplices first 
and then the finite element spaces on smooth triangulations via the pullback.

Before we commence the discussion of specific finite element spaces,
we review the framework of finite element systems (FES). 
We will only recall the main definitions and results that are most relevant to this paper and refer to~\cite{StructPresDisc} as a standard reference; see also~\cite{christiansen2008construction,christiansen2009foundations,christiansen2016constructions,gillette2019trimmed}.

Given the triangulation $\Mesh$, 
we recall that we have operators $$\cartan^{k}: \Cont^{\infty}\Lambda^k(T) \to \Cont^{\infty}\Lambda^{k+1}(T)$$ for each $T \in \Mesh$ 
and operators $$\trace^{k}_{T,S}: \Cont^{\infty}\Lambda^k(T) \to \Cont^{\infty}\Lambda^{k+1}(T)$$ for each $S,T \in \Mesh$ with $S \subseteq T$,
where the latter is the pullback along the inclusion $\incl_{S,T} \colon S \rightarrow T$. 
An \emph{element system} on $\Mesh$ is a family of subspaces $\ES\Lambda^k(T) \subseteq \Cont^{\infty}\Lambda^k(T)$ for each $k\in \bbZ$ and each $T \in \Mesh$
such that we have mappings
$$
    \cartan^{k}: \ES\Lambda^k(T) \to \ES\Lambda^{k+1}(T).
$$
for each $T \in \Mesh$ and restrictions 
\begin{gather*}
    \incl_{ST}^\ast: \ES\Lambda^k(T) \to \ES\Lambda^k(S).
\end{gather*}
for all $S, T \in \Mesh$ with $S \subseteq T$.
For example, the spaces $\Cont^{\infty}\Lambda^k(T)$ constitute an element system over the mesh $\Mesh$,
which we henceforth call the \emph{smooth} element system. 
A \emph{finite element system} is an element system whose spaces are all finite-dimensional.

The conforming space of the element system is 
\begin{gather*}
    \ES\Lambda^k(\Mesh) 
    = 
    \left\{\;
        u \in \bigoplus_{T \in \Mesh} \ES\Lambda^k(T) 
        \suchthat*
        \forall S, T \in \Mesh, S \subseteq T : \incl_{ST}^\ast u_{T} = u_{S}
    \;\right\}. 
\end{gather*}
For example, we write $\Cont^{\infty}\Lambda^{k}(\Mesh)$ for the conforming space of the smooth element system. 
We have a well-defined trace $\trace_{S} \colon \ES\Lambda^{k}(\Mesh) \rightarrow \ES\Lambda^{k}(S)$
for any $S \in \Mesh$. 
Since $\Mesh$ triangulates an $n$-dimensional manifold, each member of $\ES\Lambda^k(\Mesh)$ is uniquely determined by its values on the $n$-dimensional simplices. Hence, in what follows, we will freely use the isomorphism 
\begin{gather*}
    \ES\Lambda^k(\Mesh) 
    \approx
    \left\{\;
        u \in \bigoplus_{ \substack{ T \in \Mesh \\ \dim T = n } } \ES\Lambda^k(T) 
        \suchthat*
        \forall T, T' \in \Mesh, T \cap T' \neq \emptyset : \trace_{T,T \cap T'} u = \trace_{T',T \cap T'} u
    \;\right\}. 
\end{gather*}
Further conditions characterize practically useful conforming spaces. 
We say that the element system \emph{admits extensions} 
if the pullback $\incl_{\partial T,T}^{\ast} \colon \ES\Lambda^k(T) \rightarrow \ES\Lambda^k(\partial T)$ is onto
for each $T\in \Mesh$ and $k\in \bbZ$.
We call the element system \emph{locally exact} if the following sequence is exact for each $T \in \Mesh$:
\begin{gather*}
    \begin{CD}
        0 \to \bbR @>>>
        \ES\Lambda^0(T) @>\cartan>> \ES\Lambda^1(T) @>\cartan>> \cdots @>\cartan>> \ES\Lambda^{\dim T}(T)
        \to 0.
    \end{CD}
\end{gather*}
We call an element system \emph{compatible} if it admits extensions and is locally exact. 
For example, the smooth finite element system is compatible. 

Definitions already imply that 
\begin{gather*}
    \dim \ES\Lambda^k(\Mesh) \leq \sum_{ T \in \Mesh } \dim\mathring\ES\Lambda^k(T).
\end{gather*}
This upper bound is an equation if and only if the element system admits extensions~\cite[Proposition~5.14]{StructPresDisc}. 
One can show that the differential complex
\begin{align*}
    \begin{CD}
        0 \to \ES\Lambda^{0}(\Mesh) @>\cartan>> \ES\Lambda^{1}(\Mesh) @>\cartan>> \dots @>\cartan>> \ES\Lambda^{0}(\Mesh) \to 0
    \end{CD}
\end{align*}
realizes the Betti numbers of the manifold on cohomology if the finite element system is compatible~\cite[Proposition~5.16]{StructPresDisc}. 

If the finite element system admits extensions, 
then local exactness can be expressed equivalently in the following way~\cite[Proposition~5.17]{StructPresDisc}. 
First, we define 
\begin{align*}
    \mathring\ES\Lambda^k(T)
    :=
    \left\{\;
        u \in \ES\Lambda^k(T) 
        \suchthat*
        \forall S, T \in \Mesh, S \subseteq T : \incl_{ST}^\ast u_{T} = 0
    \;\right\}. 
\end{align*}
An element system that admits extensions is locally exact if and only if 
for each $T \in \Mesh$, the space $\ES\Lambda^0(T)$ contains the constant functions, and we have an exact sequence 
\begin{gather*}
    \begin{CD}
        0 \to 
        \mathring\ES\Lambda^0(T) @>\cartan>> \mathring\ES\Lambda^1(T) @>\cartan>> \cdots @>\cartan>> \mathring\ES\Lambda^{\dim T}(T)
        @>\int>> \bbR \to 0.
    \end{CD}
\end{gather*}
In all relevant applications, we have a compatible finite element system. 
In particular, these allow the construction of geometrically decomposed bases 
and the de~Rham complex of conforming spaces has cohomology spaces with the correct dimension. 
\\

Given a finite element system, an \emph{interpolator} is a collection of projection operators 
$\Interpolant^k_{T} : \Cont^{\infty}\Lambda^k(T) \to \ES\Lambda^k(T)$,
where $k \in \bbZ$ and $T \in \Mesh$, 
such that for any $S \in \Mesh$ with $S \subseteq T$ we have
\begin{align*}
    \trace_{T,S} \Interpolant^k_{T} = \Interpolant^k_{S} \trace_{T,S}
    .
\end{align*}
We then define a \emph{global interpolant} 
\begin{align*}
    \Interpolant^{k}_{\Mesh} \colon \bigoplus_{T \in \Mesh} \Cont^{\infty}\Lambda^k(T) \rightarrow \bigoplus_{T \in \Mesh} \ES\Lambda^k(T),
    \quad 
    \left( u_{T} \right)_{T \in \Mesh} \mapsto \left( \Interpolant^{k}_{T} u_{T} \right)_{T \in \Mesh}
    .
\end{align*}
This interpolant can be restricted to a mapping 
\begin{align*}
    \Interpolant^{k}_{\Mesh} \colon \Cont^{\infty}\Lambda^k(\Mesh) \rightarrow \ES\Lambda^k(\Mesh)
    .
\end{align*}
A general way of constructing interpolants relies on a formalization of what is commonly known as degrees of freedom in the finite element context. 
In what follows, $\Cont^{\infty}\Lambda^k(T)^\star$ is the algebraic dual space of $\Cont^{\infty}\Lambda^k(T)$. 
We remark that if we have simplices $S, T \in \Mesh$ with $S \subseteq T$, 
then $\Cont^{\infty}\Lambda^k(S)^\star$ embeds into $\Cont^{\infty}\Lambda^k(T)^\star$ via pullback.

A \emph{system of degrees of freedom} is a choice of a subspace $\Xi^k(T) \subseteq \Cont^{\infty}\Lambda^k(T)^\star$ for each $k$ and $T$.
We define
\begin{align*}
    \Xi^k(\Mesh) = \bigoplus_{T \in \Mesh} \Xi^k(T).
\end{align*}
For every simplex $T \in \Mesh$ we have a bilinear pairing 
\begin{align*}
    \Cont^{\infty}\Lambda^k(T) \times \bigoplus_{S \subseteq T} \Xi^k(S) \rightarrow \bbR, \quad (u,\xi) \mapsto \xi(u), 
\end{align*}
and we say that the system of degrees of freedom is \emph{unisolvent} on the finite element system if the restricted bilinear pairing 
\begin{gather}
    \ES\Lambda^k(T) \times \bigoplus_{S \subseteq T} \Xi^k(S) \rightarrow \bbR, \quad (u,\xi) \mapsto \xi(u).
\end{gather}
is non-degenerate for any $T \in \Mesh$. 
Informally speaking, the members of $\ES\Lambda^k(T)$ are uniquely determined by their degrees of freedom.
If the finite element system admits extensions, then one can show~\cite[Proposition~5.35]{StructPresDisc} that a system of degrees of freedom is unisolvent if and only if we have a non-degenerate bilinear pairing 
\begin{gather}
    \mathring\ES\Lambda^k(T) \times \bigoplus_{S \subseteq T} \Xi^k(S) \rightarrow \bbR, \quad (u,\xi) \mapsto \xi(u).
\end{gather}
One can show that an element system has a unisolvent system of degrees of freedom if and only if it admits extensions~\cite[Proposition~5.37]{StructPresDisc}.

Any unisolvent system of degrees of freedom induces an interpolator by 
\begin{gather*}
    \xi\left( \Interpolant^k_{T} u \right) = \xi\left( u \right), \quad u \in \Cont^{\infty}\Lambda^{k}(T), \quad \xi \in \Xi^{k}(T), \quad T \in \Mesh.
\end{gather*}
In fact, one can show the converse: a finite element system has an interpolator if and only if it has a unisolvent system of degrees of freedom in~\cite[Proposition~5.37]{StructPresDisc}. 

We are interested in interpolators that commute with the exterior derivative. 
The following result is known in that regard~\cite[Proposition~5.41]{StructPresDisc}:
The interpolator induced by a unisolvent system of degrees of freedom commutes with the exterior derivative if and only if
\begin{gather*}
    \forall \xi \in \Xi^k(T) : \xi \circ \cartan \in \bigoplus_{ S \subseteq T } \Xi^k(S).
\end{gather*}
We call this property the \emph{commutativity property}. 
One can show that this gives rise to a differential complex of the degrees of freedom.
\\

Up to now, we have summarized theoretical results. 
Now we instantiate this framework in the special case of the finite element de~Rham complex of piecewise polynomial differential forms that have been popularized over recent years. 
We first define finite element spaces on reference simplices and then the finite element spaces on smooth triangulations via the pullback.

For $k,\Dim,r \in \bbZ$ we let $\calP_{r}\Lambda^{k}(\Delta_{\Dim})$ denote the space of differential $k$-forms over $\Delta_{\Dim}$
whose coefficients are polynomials of degree up to $r$.
Additionally, we define the space 
\begin{gather*}
 \calP_{r}^{-}\Lambda^{k}(\Delta_{\Dim})
 := 
 \calP_{r-1}\Lambda^{k}(\Delta_{\Dim})
 + 
 \kappa 
 \calP_{r-1}\Lambda^{k+1}(\Delta_{\Dim}),
\end{gather*}
where $\kappa : \Cont^{\infty}\Lambda^{k}(\Delta_{\Dim}) \rightarrow \Cont^{\infty}\Lambda^{k-1}(\Delta_{\Dim})$ denotes the contraction by the source vector $X(x) = x$ over $\Delta_{\Dim}$.
We refer to~\cite{AFW1,AFWgeodecomp} for more details. 

The spaces $\calP_{r}\Lambda^{k}(\Delta_{\Dim})$
and $\calP_{r}^{-}\Lambda^{k}(\Delta_{\Dim})$ are invariant under affine automorphisms of $\Delta_{\Dim}$.
We note that these spaces are trivial unless $0 \leq k \leq \Dim$ and $r \geq 0$.
Moreover, 
\begin{gather*}
 \cartan \calP_{r+1}\Lambda^{k}(\Delta_{\Dim})
 =
 \cartan \calP_{r+1}^{-}\Lambda^{k}(\Delta_{\Dim}),
 \quad 
 \calP_{r}\Lambda^{k}(\Delta_{\Dim}) \cap \ker\cartan^{k}
 = 
 \calP_{r+1}^{-}\Lambda^{k}(\Delta_{\Dim}) \cap \ker\cartan^{k}.
\end{gather*}
We consider classes of exact differential complexes of polynomial differential forms over simplices. 
If a differential complex is composed by the arrows 
\begin{gather*}
 \cartan^{k} : \calP_{r}\Lambda^{k}(\Delta_{\Dim}) \rightarrow \calP_{r-1}\Lambda^{k}(\Delta_{\Dim}),
 \quad 
 \cartan^{k} : \calP_{r}\Lambda^{k}(\Delta_{\Dim}) \rightarrow \calP_{r}^{-}\Lambda^{k}(\Delta_{\Dim}),
 \\
 \cartan^{k} : \calP_{r}^{-}\Lambda^{k}(\Delta_{\Dim}) \rightarrow \calP_{r-1}\Lambda^{k}(\Delta_{\Dim}),
 \quad 
 \cartan^{k} : \calP_{r}^{-}\Lambda^{k}(\Delta_{\Dim}) \rightarrow \calP_{r}^{-}\Lambda^{k}(\Delta_{\Dim}),
\end{gather*}
such that consecutive spaces match, then the resulting differential complex is exact.
\\

Let $\Mesh$ be a smooth triangulation of $\Manifold$ and let $\incl_T : \Delta_{\Dim} \rightarrow \Manifold$ be one of its smooth simplices. We define on each simplex 
\begin{gather*}
 \calP_{r}\Lambda^{k}(T)
 := 
 \incl_T^{-\ast} \calP_{r}\Lambda^{k}(\Delta_{\Dim})
 ,
 \quad 
 \calP_{r}^{-}\Lambda^{k}(T)
 := 
 \incl_T^{-\ast} \calP_{r}^{-}\Lambda^{k}(\Delta_{\Dim})
 . 
\end{gather*}
Correspondingly, we have the same exact sequences of polynomial differential forms on all the simplices of the triangulation. 

One can show that these spaces define a compatible finite element system; see~\cite{AFWgeodecomp,christiansen2016high}. 
We introduce finite element spaces over smooth triangulations by imposing continuity conditions,
which leads to a finite element de~Rham complex that realizes the Betti numbers of the manifold on cohomology. 
Let $\calP_{r}\Lambda^{k}(\Mesh)$ be the vector space 
of families $({u}_T)_{T \in \Mesh^{n}}$
of smooth differential forms ${u}_T \in \calP_{r}\Lambda^{k}(T)$
over the $n$-simplices of $T \in \Mesh$ such that for all $T_1, T_2 \in \Mesh$
with non-empty intersection $S = T_1 \cap T_2$
we have $\trace_{T_1,S} {u}_{T_1} = \trace_{T_2,S} {u}_{T_2}$.
In the terminology of~\cite{arnold2009geometric},
$\calP_{r}\Lambda^{k}(\Mesh)$ contains the single-valued differential forms that are piecewise in $\calP_{r}\Lambda^{k}(\Mesh)$.
In a completely analogous manner, 
we define the space $\calP_{r}^{-}\Lambda^{k}(\Mesh)$ as the single-valued differential forms that are piecewise in $\calP_{r}^{-}\Lambda^{k}(\Mesh)$.
For each $S \in \Mesh$ there exist well-defined linear mappings
\begin{gather*}
 \trace_{S} : \calP_{r}\Lambda^{k}(\Mesh) \rightarrow \calP_r\Lambda^{k}(S),
 \quad 
 \trace_{S} : \calP_{r}^{-}\Lambda^{k}(\Mesh) \rightarrow \calP_r^{-}\Lambda^{k}(S).
\end{gather*}
This defines finite element spaces over smooth triangulations.

\subsection{Canonical interpolants}
We also define the canonical finite element interpolant, which we define on reference simplices first.
We then introduce global canonical interpolants. 
The definition involves degrees of freedom.

We introduce the sets of functionals
\begin{gather*}
    \calC_{r}\Lambda^{k}(\Delta^{\Dim})     := \left\{\; \int_{T} \cdot \wedge \eta \suchthat* \eta \in \calP_{r-\Dim+k}^{-}\Lambda^{\Dim-k}(\Delta^{\Dim}) \;\right\},
    \\
    \calC_{r}^{-}\Lambda^{k}(\Delta^{\Dim}) := \left\{\; \int_{T} \cdot \wedge \eta \suchthat* \eta \in \calP_{r-\Dim+k-1}\Lambda^{\Dim-k}(\Delta^{\Dim}) \;\right\}.
\end{gather*}
These are functionals over the set of smooth differential forms over $\Delta^{\Dim}$,
and they span the dual spaces of $\mathring\calP_{r}\Lambda^{k}(\Delta^{\Dim})$ and $\mathring\calP_{r}^{-}\Lambda^{k}(\Delta^{\Dim})$, respectively~\cite[Section~4]{AFW1}. 
We transfer these functionals onto the physical simplices by setting 
\begin{gather*}
    \calC_{r}\Lambda^{k}(T)     = \calC_{r}\Lambda^{k}(\Delta^{\Dim}) \circ \incl_{T}^{\ast},
    \\
    \calC_{r}^{-}\Lambda^{k}(T) = \calC_{r}^{-}\Lambda^{k}(\Delta^{\Dim}) \circ \incl_{T}^{\ast} 
\end{gather*}
whenever $T \in \Mesh$ is a $\Dim$-dimensional simplex. 
We see that these spaces constitute a system of degrees of freedom and one can show that it is unisolvent and satisfies the commutativity property.

There exist bounded projections 
\begin{gather*}
    \Interpolant^{\Dim,k,r}   : \Cont^{\infty}\Lambda^{k}(\Delta_{\Dim}) \rightarrow \calP_{r}    \Lambda^{k}(\Delta_{\Dim}) \subseteq \Cont\Lambda^{k}(\Delta_{\Dim}),
    \\
    \Interpolant^{\Dim,k,r,-} : \Cont^{\infty}\Lambda^{k}(\Delta_{\Dim}) \rightarrow \calP_{r}^{-}\Lambda^{k}(\Delta_{\Dim}) \subseteq \Cont\Lambda^{k}(\Delta_{\Dim})
\end{gather*}
which are defined by requiring 
\begin{gather*}
    \int_{F} \trace_{\Delta^{\Dim},F} \Interpolant^{\Dim,k,r} u \wedge \eta
    =
    \int_{F} \trace_{\Delta^{\Dim},F} u \wedge \eta,
    \quad 
    \eta \in \calP_{r+k-\Dim}^{-}\Lambda^{\dim F - k}(F), 
    \\
    \int_{F} \trace_{\Delta^{\Dim},F} \Interpolant^{\Dim,k,r,-} u \wedge \eta
    =
    \int_{F} \trace_{\Delta^{\Dim},F} u \wedge \eta,
    \quad 
    \eta \in \calP_{r+k-\Dim-1}\Lambda^{\dim F - k}(F) 
\end{gather*}
for all subsimplices $F$ of $T$. 
We quote the following results on the interpolation operator over the reference simplex. 

\begin{theorem}\label{theorem:canonicalinterpolantbound}
    There exist bounded projections 
    \begin{gather*}
        \Interpolant^{\Dim,k,r}   : \Cont\Lambda^{k}(\Delta_{\Dim}) \rightarrow \calP_{r}    \Lambda^{k}(\Delta_{\Dim}) \subseteq \Cont\Lambda^{k}(\Delta_{\Dim}),
        \\
        \Interpolant^{\Dim,k,r,-} : \Cont\Lambda^{k}(\Delta_{\Dim}) \rightarrow \calP_{r}^{-}\Lambda^{k}(\Delta_{\Dim}) \subseteq \Cont\Lambda^{k}(\Delta_{\Dim})
        . 
    \end{gather*}
    There exists a constant $\Cinterpol{\Dim}{k}{r} > 0$ such that
    \begin{gather*}
        \| \Interpolant^{\Dim,k,r  } {u} \|_{\Cont\Lambda^{k}(\Delta_{\Dim})} \leq \Cinterpol{\Dim}{k}{r} \| {u} \|_{\Cont\Lambda^{k}(\Delta_{\Dim})}
        ,
        \quad
        \| \Interpolant^{\Dim,k,r,-} {u} \|_{\Cont\Lambda^{k}(\Delta_{\Dim})} \leq \Cinterpol{\Dim}{k}{r} \| {u} \|_{\Cont\Lambda^{k}(\Delta_{\Dim})}
        .
    \end{gather*}
    If ${u} \in \Cont\Lambda^{k}(\Delta_{\Dim})$ with $\cartan{u} \in \Cont\Lambda^{k+1}(\Delta_{\Dim})$, then 
    \begin{align*}
        \cartan \Interpolant^{\Dim,k,r  } {u} = \Interpolant^{\Dim,k+1,r-1} \cartan {u} = \Interpolant^{\Dim,k+1,r,-} \cartan {u},
        \quad 
        \cartan \Interpolant^{\Dim,k,r,-} {u} = \Interpolant^{\Dim,k+1,r,-} \cartan {u} = \Interpolant^{\Dim,k+1,r-1} \cartan {u}.
    \end{align*}
    Whenever $I \subseteq \{0:\Dim\}$ describes an $l$-dimensional subsimplex of $\Delta_{\Dim}$, then 
    \begin{align*}
        \Interpolant^{l,k,r} \incl^{\ast}_{I,\Dim} = \incl^{\ast}_{I,\Dim} \Interpolant^{\Dim,k,r}
        , \quad 
        \Interpolant^{l,k,r,-} \incl^{\ast}_{I,\Dim} = \incl^{\ast}_{I,\Dim} \Interpolant^{\Dim,k,r,-}
        .
    \end{align*}
\end{theorem}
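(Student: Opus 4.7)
The statement packages together four assertions about the canonical interpolants, all of which are standard in FEEC theory; my plan is to verify each from the degrees-of-freedom framework already introduced above.

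First, I would establish that $\Interpolant^{\Dim,k,r}$ and $\Interpolant^{\Dim,k,r,-}$ are well-defined projections. The moment conditions defining them, taken over all subsimplices $F \subseteq \Delta_{\Dim}$, are exactly the evaluations of the functionals in $\calC_{r}\Lambda^{k}(\Delta^{\Dim})$ (respectively $\calC_{r}^{-}\Lambda^{k}(\Delta^{\Dim})$) attached to every subsimplex. Since these systems of degrees of freedom are unisolvent on $\calP_{r}\Lambda^{k}(\Delta_{\Dim})$ and $\calP_{r}^{-}\Lambda^{k}(\Delta_{\Dim})$ (cited from \cite[Section~4]{AFW1}), the interpolants exist, are unique, and reproduce the target polynomial spaces; i.e., they are projections.

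Next, I would prove the continuous-norm bound. Each defining functional has the form $u \mapsto \int_{F} \trace_{\Delta^{\Dim},F} u \wedge \eta$ with $\eta$ smooth, and the trace onto any subsimplex $F$ is continuous from $\Cont\Lambda^{k}(\Delta_{\Dim})$ to $\Cont\Lambda^{k}(F)$ with operator norm one. Thus each dof is a bounded linear functional on $\Cont\Lambda^{k}(\Delta_{\Dim})$ with a norm depending only on $\Dim, k, r$ (the polynomial weight $\eta$). Picking any basis of $\calP_{r}\Lambda^{k}(\Delta_{\Dim})$ dual to a basis of the dof system, the interpolant is a finite sum of bounded functionals against fixed basis forms, and boundedness in the $\Cont\Lambda^{k}$ norm with constant $\Cinterpol{\Dim}{k}{r}$ follows from equivalence of norms on finite-dimensional spaces. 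The same argument handles the $-$ family.

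For commutativity with $\cartan$, I would verify the commutativity property of the degrees of freedom from Section~\ref{sec:finiteelementspaces}: using Stokes' theorem, $\xi \circ \cartan$ for $\xi \in \calC_{r}\Lambda^{k}(T)$ rewrites as a combination of members of $\calC_{r}\Lambda^{k+1}$ (or $\calC_{r}^{-}\Lambda^{k+1}$) on $T$ and its boundary subsimplices, and analogously for $\calC_{r}^{-}\Lambda^{k}(T)$. The identities
$\cartan \Interpolant^{\Dim,k,r} u = \Interpolant^{\Dim,k+1,r-1} \cartan u$ and the parallel identity for the minus-complex then follow by applying $\xi$ to both sides and invoking unisolvence; the mixed identity with $\Interpolant^{\Dim,k+1,r,-}$ uses $\cartan \calP_{r}\Lambda^{k} = \cartan \calP_{r}^{-}\Lambda^{k}$ and the analogous kernel statements already recalled above.

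Finally, the restriction property $\Interpolant^{l,k,r} \incl^{\ast}_{I,\Dim} = \incl^{\ast}_{I,\Dim} \Interpolant^{\Dim,k,r}$ is by construction: the degrees of freedom on the subsimplex $\incl_{I,\Dim}(\Delta_{l})$ appear among the dofs defining $\Interpolant^{\Dim,k,r}$, and unisolvence on the subsimplex forces equality of the pulled-back interpolant with the interpolant of the pullback. The main conceptual obstacle is really only the commutativity with $\cartan$; everything else is a direct consequence of unisolvence and finite-dimensional norm equivalence.
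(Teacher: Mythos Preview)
Your sketch is correct and outlines precisely the standard argument; the paper itself does not reprove the statement but simply defers to \cite{AFW1}, where the unisolvence, boundedness, commutativity, and trace-compatibility of the canonical interpolants are established along the lines you describe. One minor quibble: the trace $\incl_{I,\Dim}^{\ast}$ need not have operator norm exactly one on $k$-forms (the affine inclusion of $\Delta_l$ into a face of $\Delta_\Dim$ is not always an isometry), but it is bounded by a dimensional constant, which is all your argument requires.
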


\begin{proof}
     See~\cite{AFW1}. \end{proof}

The above theorem implies that the traces of the interpolants along a subsimplices 
only depend on the trace of the continuous differential form being interpolated. 

We now transport this definition of commuting interpolant from the reference simplices onto physical simplices. 
Whenever $\incl_T : \Delta_{\Dim} \rightarrow \Manifold$ is an $\Dim$-simplex
and $\incl_T^{-1} : \incl_{T}(\Delta_{\Dim}) \rightarrow \Delta_{\Dim}$ denotes its inverse, 
we write $\incl_T^{\ast}$ and $\incl_T^{-\ast}$ for the pullbacks along those mappings. 
We define the interpolants 
\begin{align*}
 \Interpolant_{T}^{k,r}   : \Cont\Lambda^{k}(T,\metric) \rightarrow \calP_{r}    \Lambda^{k}(T) \subset \Cont\Lambda^{k}(T,\metric),
 \quad 
 \Interpolant_{T}^{k,r,-} : \Cont\Lambda^{k}(T,\metric) \rightarrow \calP_{r}^{-}\Lambda^{k}(T) \subset \Cont\Lambda^{k}(T,\metric),
\end{align*}
by setting 
\begin{align*}
 \Interpolant_{T}^{k,r} = \incl_T^{-\ast} \Interpolant^{\Dim,k,r}\incl_T^{\ast},
 \quad 
 \Interpolant_{T}^{k,r,-} = \incl_T^{-\ast} \Interpolant^{\Dim,k,r,-}\incl_T^{\ast}.
\end{align*}
Clearly, these mappings commute with the exterior derivative. 
Furthermore, whenever $\incl_S : \Delta_{l} \rightarrow \Manifold$ is a subsimplex of $\incl_{T}$,
we have the relations 
\begin{align*}
 \Interpolant_{S}^{k,r} \trace_{T,S} = \trace_{T,S} \Interpolant_{T}^{k,r},
 \quad 
 \Interpolant_{S}^{k,r,-} \trace_{T,S} = \trace_{T,S} \Interpolant_{T}^{k,r,-}.
\end{align*}
We define the global finite element interpolants 
\begin{gather*}
 \Interpolant^{k,r}   : \Cont\Lambda^{k}(\Manifold,\metric) \rightarrow \calP_{r}    \Lambda^{k}(\Mesh), \quad {u} \mapsto \sum_{ T \in \Mesh^{n} } \Interpolant^{k,r}_{T}   {u},
 \\
 \Interpolant^{k,r,-} : \Cont\Lambda^{k}(\Manifold,\metric) \rightarrow \calP_{r}^{-}\Lambda^{k}(\Mesh), \quad {u} \mapsto \sum_{ T \in \Mesh^{n} } \Interpolant^{k,r,-}_{T} {u}.
\end{gather*}
It is clear from the definitions that these mappings are well-defined and commute with the exterior derivative.

We find local bounds via standard arguments based on reference transformations, adapted to the manifold situation. 
Utilizing the pullback estimates 
\begin{gather*}
 \| \incl_{T}^{\ast} {u} \|_{\Lebesgue^{2}\Lambda^{k}(\Delta_{\Dim})} 
 \leq 
 \| \Jacobian\incl_{T} \|_{\Lebesgue^\infty(\Delta_{\Dim})}^{k}
 \| \det\Jacobian\incl_{T}^{-1} \|_{\Lebesgue^\infty(T)}^{\frac 1 2}
 \| {u} \|_{\Lebesgue^{2}\Lambda^{k}(T)} 
 ,
 \\
 \| \incl_{T}^{-\ast} {u} \|_{\Lebesgue^{2}\Lambda^{k}(T)} 
 \leq 
 \| \Jacobian\incl_{T}^{-1} \|_{\Lebesgue^\infty(T)}^{k}
 \| \det\Jacobian\incl_{T} \|_{\Lebesgue^\infty(\Delta_{\Dim})}^{\frac 1 2}
 \| {u} \|_{\Lebesgue^{2}\Lambda^{k}(\Delta_{\Dim})} 
 ,
\end{gather*}
one easily shows 
\begin{gather*}
    \| \Interpolant^{k,r  }_{T} {u} \|_{\Cont\Lambda^{k}(T)} 
    \leq 
    \Cinterpol{\Dim}{k}{r} \shapeconsimplexONE{T}^{2k} \| {u} \|_{\Cont\Lambda^{k}(T)}
    ,
    \quad
    \| \Interpolant^{k,r,-}_{T} {u} \|_{\Cont\Lambda^{k}(T)} 
    \leq 
    \Cinterpol{\Dim}{k}{r} \shapeconsimplexONE{T}^{2k} \| {u} \|_{\Cont\Lambda^{k}(T)}
    .
\end{gather*}
The general intuition is that the operator norms of the interpolants are uniformly bounded 
if the simplices are non-degenerate with respect to the Riemannian metric.

\begin{remark}\label{remark:summaryofFEMquantities}
    In summary, all upper bounds discussed in this section can be expressed in terms of the quantities 
    $\shapeconsimplexONE{\Mesh}$, the dimension of simplices, and the polynomial degree. 
\end{remark}

\section{Smoothing operators on domains}\label{sec:smoothing}

We introduce a mollification operator on $\bbR^{n}$ that smoothes a given differential form within a fixed open set
but does not change the differential form far away from that set.
We prove bounds in Lebesgue norms and show that the mollification operator commutes with the exterior derivative.

Mollification via convolution is a well-known concept. 
In what follows, $\molli : \bbR^{n} \rightarrow \bbR$
is a fixed smooth non-negative function with support in $\overline{\Ball_1(0)}$
and integral $1$. We call $\molli$ the \emph{mollifier}.
If $\epsilon > 0$, then we define the \emph{scaled mollifier} by
\begin{gather}\label{math:smoothing:scaledmollifier}
 \molli_{\eps} : \bbR^{n} \rightarrow \bbR,
 \quad 
 y \mapsto \eps^{-n} \molli( \eps^{\inv} y ).
\end{gather}
We note that $\molli_{\eps}$ is smooth, non-negative,
has support in $\overline \Ball_\eps(0)$, and has integral $1$.
For any locally integrable function ${u} \in \Lebesgue^{1}_{\loc}(\bbR^{n})$, we denote its convolution with $\molli_{\eps}$ by  
\begin{gather}\label{math:smoothing:mollificationoffunctions}
 \left( {u} \star \molli_{\eps} \right)(x)
 :=
 \int_{\bbR^{n}} \molli_{\eps}( y - x ) {u}(y) \;\dif y
 .
\end{gather}
More generally, we mollify differential forms ${u} \in \Lebesgue^{1}_{\loc}\Lambda^{k}(\bbR^{n})$ coefficientwise: 
\begin{gather}\label{math:smoothing:mollificationofforms}
 \left( {u} \star \molli_{\eps} \right)(x)
 :=
 \sum_{\sigma \in \Sigma(k,n)} 
 \int_{\bbR^{n}}
 \molli_{\eps}( y - x ) {u}_{\sigma}(y)
 \;\dif y
 \cartanx^{\sigma}
 .
\end{gather}
We recall the following result on the regularization
of differential forms.

\begin{theorem}\label{math:smoothing:standardresult}
 Let $p \in [1,\infty]$ and ${u} \in \Lebesgue^{p}\Lambda^{k}(\bbR^{n})$.
 Then ${u} \star \molli_{\eps} \in \Cont^{\infty}\Lambda^{k}(\bbR^{n})$
 and ${u} \star \molli_{\eps}$ converges to ${u}$ almost everywhere.
 If $p < \infty$, then ${u} \star \molli_{\eps} \rightarrow {u}$
 for $\eps \rightarrow 0$ in $\Lebesgue^{p}\Lambda^{k}(\bbR^{n})$.
\end{theorem}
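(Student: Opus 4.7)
The plan is to reduce the statement to the scalar case and then invoke the standard Lebesgue-theoretic mollifier toolkit. Since the mollification~\eqref{math:smoothing:mollificationofforms} acts coefficientwise with respect to the standard basis $\cartanx^{\sigma}$, and since the pointwise norm $|\cdot|_{\metric}$ of a differential $k$-form on $\bbR^{n}$ is (uniformly in $n,k$) equivalent to the $\ell^{2}$-norm of its coefficient functions in the standard basis, it suffices to prove the assertion when $k=0$, that is, for scalar-valued $u \in \Lebesgue^{p}(\bbR^{n})$. The three claims to establish are then: (i) smoothness of $u \star \molli_{\eps}$, (ii) pointwise almost-everywhere convergence, and (iii) $\Lebesgue^{p}$-convergence when $p < \infty$.

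For smoothness, I would differentiate under the integral sign. Since $\molli_{\eps} \in \Cont^{\infty}_{c}(\bbR^{n})$, for any multi-index $\alpha$ the function $\partial^{\alpha}\molli_{\eps}$ is smooth and compactly supported, hence bounded. Combined with the local integrability of $u$ (guaranteed by $u \in \Lebesgue^{p}$ on bounded neighborhoods), dominated convergence justifies interchanging differentiation and integration, yielding $\partial^{\alpha}(u \star \molli_{\eps})(x) = \int_{\bbR^{n}} (\partial^{\alpha}\molli_{\eps})(y-x)\, u(y)\,\dif y$ for all $\alpha$. This gives $u \star \molli_{\eps} \in \Cont^{\infty}(\bbR^{n})$.

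For $\Lebesgue^{p}$-convergence with $p < \infty$, I would proceed by a standard density argument. First, Young's inequality (or direct application of Minkowski's inequality for integrals, using $\|\molli_{\eps}\|_{\Lebesgue^{1}}=1$) gives the uniform bound $\|u \star \molli_{\eps}\|_{\Lebesgue^{p}(\bbR^{n})} \le \|u\|_{\Lebesgue^{p}(\bbR^{n})}$. Next, for $\phi \in \Cont_{c}(\bbR^{n})$, uniform continuity of $\phi$ implies $\phi \star \molli_{\eps} \to \phi$ uniformly as $\eps \to 0$, with supports contained in a fixed compact set; hence $\phi \star \molli_{\eps} \to \phi$ in $\Lebesgue^{p}$. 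Since $\Cont_{c}(\bbR^{n})$ is dense in $\Lebesgue^{p}(\bbR^{n})$ for $p < \infty$, a three-$\epsilon$ argument combining the uniform bound with the dense-subspace convergence yields $u \star \molli_{\eps} \to u$ in $\Lebesgue^{p}(\bbR^{n})$.

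The one genuinely substantive step is the almost-everywhere convergence, which must be handled separately since it is asserted for all $p \in [1,\infty]$, including $p = \infty$ where density fails. Here I would invoke the Lebesgue differentiation theorem: for every Lebesgue point $x$ of $u$ (which covers almost every $x \in \bbR^{n}$), the estimate
\begin{align*}
    |(u \star \molli_{\eps})(x) - u(x)|
    &\le \int_{\bbR^{n}} \molli_{\eps}(y-x)\,|u(y)-u(x)|\,\dif y
    \\
    &\le \|\molli\|_{\Lebesgue^{\infty}(\bbR^{n})} \,\eps^{-n} \int_{\Ball_{\eps}(x)} |u(y)-u(x)|\,\dif y
\end{align*}
tends to $0$ as $\eps \to 0$ by definition of a Lebesgue point. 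I expect no real obstacle beyond correctly invoking Lebesgue differentiation; the paragraph on coefficientwise reduction and the three scalar facts together complete the proof.
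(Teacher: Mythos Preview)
Your proof is correct and follows the standard textbook route. The paper itself does not give a proof: it simply cites \cite[Theorem~C.19]{leoni2017first}, so your argument is essentially supplying what the reference would contain. There is nothing to compare beyond noting that your coefficientwise reduction, differentiation under the integral sign, density argument for $\Lebesgue^{p}$-convergence, and Lebesgue differentiation for almost-everywhere convergence are exactly the ingredients one expects in such a citation.
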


\begin{proof}
 This is proven in~\cite[Theorem~C.19]{leoni2017first}.
\end{proof}

The regularization ${u} \star \molli_{\eps}$ at any point $x \in \bbR^{n}$ 
is defined by averaging ${u}$ within the ball $\Ball_\eps(x)$ with weight $\molli_{\eps}$. 
The remainder of this section explores a variant of that idea:
we average within the ball $\Ball_{\scaler(x)}(x)$,
where $\scaler : \bbR^{n} \rightarrow \bbR$ is a compactly supported non-negative function
that determines the local regularization radius.

We assume that $\scaler : \bbR^{n} \rightarrow \bbR$ is a smooth compactly supported non-negative function.
We define the $\scaler$-neighborhood of any set $A \subseteq \bbR^{n}$ by
\begin{gather}\label{math:smoothing:generalizedball}
 \Ball_{\scaler}(A) := \bigcup_{x \in A} \Ball_{\scaler(x)}(x)
 .
\end{gather}
We introduce position-dependent translation fields parameterized over $y \in \bbR^{n}$:
\begin{gather}\label{math:smoothing:translationoperator}
 \Phi_{\scaler,y}
 : 
 \bbR^{n} \rightarrow \bbR^{n},
 \quad 
 x
 \mapsto
 x + \scaler(x) y
 .
\end{gather}
Their Jacobians take the form 
\begin{align*}
 \Jacobian \Phi_{\scaler,y} = \Id + y \otimes \nabla\scaler.
\end{align*}
For any ${u} \in \Cont^{\infty}\Lambda^{k}(\bbR^{n})$,
which we write with the standard representation~\eqref{math:standardrepresentation},  
the pullback along $\Phi_{\scaler,y}$ satisfies 
\begin{align*}
 ( \Phi_{\scaler,y}^{\ast} {u} )_{|x}
 =
 \sum_{ \sigma \in \Sigma(k,n) }
 {u}_{\sigma}\left( x + \scaler(x) y \right)
 \cdot
 \left( \Phi_{\scaler,y}^{\ast} \cartanx^{\sigma} \right)_{|x}
 .
\end{align*}
Note that $( \Phi_{\scaler,y}^{\ast} \cartanx^{\sigma} )_{|x}$ depends smoothly on $x$ and is affine in $y$, and that
\begin{align*}
 \left| ( \Phi_{\scaler,y}^{\ast} \cartanx^{\sigma} )_{|x} \right|
 \leq 
 \left( 1 + |y| \cdot | \nabla\scaler (x) | \right)^{k}
 \leq 
 \left( 1 + |y| \cdot \| \nabla\scaler \|_{\Lebesgue^{\infty}(\bbR^{n})} \right)^{k}
 .
\end{align*}
With that in place, we define the localized mollification operator 
\begin{gather}\label{math:smoothing:regularizer:smooth}
    \Regu_{\scaler}^{k} : \Cont^{\infty}\Lambda^{k}(\bbR^{n}) \rightarrow \Cont^{\infty}\Lambda^{k}(\bbR^{n})
\end{gather}
by setting 
\begin{gather}\label{math:smoothing:definition}
    \left( \Regu_{\scaler}^{k} {u} \right)_{|x}
    :=
    \int_{ \bbR^{n} } \molli(y) \left( \Phi_{\scaler,y}^{\ast} {u} \right)_{|x} \;\dif y,
    \quad 
    x \in \bbR^{n},
    \quad 
    {u} \in \Cont^{\infty}\Lambda^{k}(\bbR^{n}).
\end{gather}
Clearly, this integral converges for any $x \in \bbR^n$,
and the smoothness of $\Regu_{\scaler}^{k} {u}$ follows by the dominated convergence theorem. 
Equivalently, 
\begin{gather}\label{math:smoothing:definition:rewriting}
    \Regu_{\scaler}^{k}
    \left( {u} \right)_{|x}
    =
    \sum_{ \sigma \in \Sigma(k,n) }
    \int_{ \bbR^{n} }
    \molli(y)
    {u}_{\sigma}\left( x + \scaler(x) y \right)
    \cdot
    \left( \Phi_{\scaler,y}^{\ast} \cartanx^{\sigma} \right)_{|x}
    \;\dif y
    .
\end{gather}
We see that this integral is finite for any $x \in \bbR^{n}$ even if ${u} \in \Lebesgue^{p}\Lambda^{k}(\bbR^{n})$ with $p \in [1,\infty]$. H\"older's inequality and the properties of $\molli$ and $\Phi_{\scaler,y}$ imply the continuity of the operator
\begin{gather}\label{math:smoothing:regularizer:Lp}
 \Regu_{\scaler}^{k} : \Lebesgue^{p}\Lambda^{k}(\bbR^{n}) \rightarrow \Lebesgue^{p}\Lambda^{k}(\bbR^{n})
 .
\end{gather}
The remainder of this section lists important properties of the mollification operator. 
We begin with a simple observation: $\Regu_{\scaler}^{k}$ does not change the differential form outside of $\supp \scaler$.

\begin{lemma}\label{lemma:smoothing:invariantoutside}
 If ${u} \in \Lebesgue^{1}_{\loc}\Lambda^{k}$, then ${u} = \Regu_{\scaler}^{k} {u}$ almost everywhere over $\bbR^{n} \setminus \supp \scaler$. 
 If ${u} \in \Cont^{\infty}\Lambda^{k}(\bbR^{n})$ and $x \in \bbR^{n} \setminus \supp \scaler$, then $\Regu_{\scaler}^{k} {u}_{|x} = {u}_{|x}$.
\end{lemma}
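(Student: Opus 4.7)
The plan is to unwind the definition \eqref{math:smoothing:definition:rewriting} at any point $x \notin \supp\scaler$ and observe that both $\scaler$ and its gradient vanish there, so the integrand becomes trivial in $y$.

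First I would argue that whenever $x \in \bbR^{n} \setminus \supp\scaler$, the point $x$ lies in the open complement of $\supp\scaler$, so $\scaler$ is identically zero in some neighborhood of $x$. This gives both $\scaler(x) = 0$ and $\nabla\scaler(x) = 0$. The first identity yields $\Phi_{\scaler,y}(x) = x + \scaler(x) y = x$ for every $y \in \bbR^{n}$, while the second, combined with the formula $\Jacobian \Phi_{\scaler,y} = \Id + y \otimes \nabla\scaler$, yields $(\Jacobian\Phi_{\scaler,y})_{|x} = \Id$, and therefore $(\Phi_{\scaler,y}^{\ast} \cartanx^{\sigma})_{|x} = \cartanx^{\sigma}_{|x}$ for every $\sigma \in \Sigma(k,n)$ and every $y$.

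Next I would substitute these two facts into the explicit formula \eqref{math:smoothing:definition:rewriting}. For the smooth case, at any $x \in \bbR^{n} \setminus \supp\scaler$,
\begin{align*}
    \left( \Regu_{\scaler}^{k} {u} \right)_{|x}
    &=
    \sum_{\sigma \in \Sigma(k,n)} \int_{\bbR^{n}} \molli(y) \, {u}_{\sigma}(x) \, \cartanx^{\sigma}_{|x} \;\dif y
    =
    \sum_{\sigma \in \Sigma(k,n)} {u}_{\sigma}(x) \cartanx^{\sigma}_{|x} \int_{\bbR^{n}} \molli(y) \;\dif y
    =
    {u}_{|x},
\end{align*}
using that $\molli$ has total integral $1$. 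This settles the second assertion.

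For the first assertion, the same computation is valid pointwise for almost every $x \in \bbR^{n} \setminus \supp\scaler$: indeed, the standard coefficients ${u}_{\sigma}$ are defined almost everywhere, and at such a point the vanishing of $\scaler$ and $\nabla\scaler$ still reduces the integrand in \eqref{math:smoothing:definition:rewriting} to the constant-in-$y$ expression $\molli(y) {u}_{\sigma}(x) \cartanx^{\sigma}_{|x}$. Integrating in $y$ gives ${u}_{|x}$ almost everywhere on $\bbR^{n} \setminus \supp\scaler$, as required. There is no genuine obstacle here; the only subtle point is to remember that membership in the \emph{open} complement of $\supp\scaler$ forces $\nabla\scaler(x) = 0$ as well, not merely $\scaler(x) = 0$, since otherwise the pullback of $\cartanx^{\sigma}$ would not simplify.
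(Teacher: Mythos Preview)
Your proof is correct and rests on the same key observation as the paper's: that $\Phi_{\scaler,y}(x) = x$ whenever $x \notin \supp\scaler$. The execution differs slightly. The paper verifies the $\Lebesgue^{1}_{\loc}$ case weakly, pairing $\Regu_{\scaler}^{k}u$ against an arbitrary test form $v \in \Cont^{\infty}_{c}\Lambda^{n-k}(\bbR^{n}\setminus\supp\scaler)$ and reducing the double integral to $\int u \wedge v$; you instead evaluate the formula~\eqref{math:smoothing:definition:rewriting} directly at almost every point. Both are valid. Your route is a bit more elementary, and you are explicit about a point the paper leaves tacit: that $\nabla\scaler(x)=0$ is needed (not just $\scaler(x)=0$) so that $(\Jacobian\Phi_{\scaler,y})_{|x} = \Id$ and the pullback of $\cartanx^{\sigma}$ is trivial. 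The paper's test-function approach has the mild advantage of sidestepping any worry about which representative of the $\Lebesgue^{1}_{\loc}$ class one is evaluating pointwise, but since the claim is only almost-everywhere equality, your direct argument is perfectly adequate.
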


\begin{proof}
    Let ${u} \in \Lebesgue^{1}_{\loc}\Lambda^{k}(\bbR^{n})$ and let ${v} \in \Cont^{\infty}_{c}\Lambda^{n-k}(\bbR^{n})$ 
    be a smooth test function with support in $\bbR^{n} \setminus \supp \scaler$.
    When $x \in \bbR^{n} \setminus \supp \scaler$, then $\Phi_{\scaler,y}(x) = x$.
    Using that $\molli$ has unit integral, 
    \begin{align*}
        \int_{\bbR^{n}} \left( \Regu_{\scaler}^{k} {u} \right)_{|x} \wedge {v}(x) \dif x
        =
        \int_{\bbR^{n}} \int_{ \bbR^{n} } \molli(y) {u}(x) \dif y \wedge {v}(x) \dif x
        = 
        \int_{\bbR^{n}} {u}(x) \wedge {v}(x) \dif x.
    \end{align*}
    We conclude that $\Regu_{\scaler}^{k} {u}$ equals ${u}$ almost everywhere over $\bbR^{n} \setminus \supp \scaler$. 
    If ${u}$ is smooth, then $\Regu_{\scaler}^{k} {u} = {u}$ follows by standard arguments. 
\end{proof}

The mollification operator leaves the differential form unchanged away from $\supp \scaler$. 
The following two results formalize that the mollification operator preserves Lebesgue classes of differential forms 
and does not deteriorate local smoothness. 

\begin{lemma}\label{lemma:smoothing:lpbound}
 Let $p \in [1,\infty]$ and ${u} \in \Lebesgue^{p}\Lambda^{k}(\bbR^{n})$. 
 Let $A \subseteq \bbR^{n}$ be measurable and assume $|\nabla\scaler| < 1$ over $A$. 
 Then 
 \begin{gather}\label{math:smoothing:lpbound}
  \| \Regu_{\scaler}^{k} {u} \|_{\Lebesgue^{p}(A)}
  \leq 
  \left( 1 + \| \nabla\scaler \|_{\Lebesgue^{\infty}(A)} \right)^{k}
  \left( 1 - \|\nabla\scaler\|_{\Lebesgue^{\infty}(A)} \right)^{-\frac n p}
  \| {u} \|_{\Lebesgue^{p}(\Ball_{\scaler}(A))}
  . 
 \end{gather}
 In particular, if ${u}$ is locally integrable, then so is $\Regu_{\scaler}^{k} {u}$. 
\end{lemma}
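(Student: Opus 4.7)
The plan is to combine the pointwise representation \eqref{math:smoothing:definition:rewriting} with Jensen's inequality, Fubini's theorem, and a single position-dependent change of variables. The integration variable $y$ plays the role of a ``convolution parameter'' restricted to $\overline{\Ball_1(0)}$ by the support of $\molli$, while the variable $x$ is ultimately transformed by the near-identity map $\Phi_{\scaler,y}$.

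First I would extract a pointwise envelope. For $x \in A$ and $|y| \leq 1$ the pullback cofactor satisfies $|(\Phi_{\scaler,y}^{\ast}\cartanx^{\sigma})_{|x}| \leq (1+|y|\,|\nabla\scaler(x)|)^{k} \leq (1+\|\nabla\scaler\|_{\Lebesgue^\infty(A)})^{k}$, so \eqref{math:smoothing:definition:rewriting} yields
\[
    |\Regu_{\scaler}^{k}{u}|_{|x} \;\leq\; (1+\|\nabla\scaler\|_{\Lebesgue^\infty(A)})^{k} \int_{\bbR^{n}} \molli(y)\, |u|_{|\Phi_{\scaler,y}(x)}\, \dif y.
\]
Because $\molli\,\dif y$ is a probability measure on $\overline{\Ball_1(0)}$, Jensen's inequality applied to $t \mapsto t^{p}$ (with the obvious modification at $p = \infty$) promotes this to a $p$-th power estimate. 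Integrating over $A$ and applying Fubini then produces
\[
    \|\Regu_{\scaler}^{k}{u}\|_{\Lebesgue^{p}(A)}^{p} \;\leq\; (1+\|\nabla\scaler\|_{\Lebesgue^\infty(A)})^{kp} \int_{\bbR^{n}} \molli(y) \int_{A} |u(\Phi_{\scaler,y}(x))|^{p} \dif x\; \dif y.
\]

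Second, for each fixed $y$ with $|y| \leq 1$ I would perform the substitution $z = \Phi_{\scaler,y}(x) = x + \scaler(x) y$ in the inner integral. The Jacobian $\Id + y \otimes \nabla\scaler(x)$ differs from the identity in operator norm by at most $\|\nabla\scaler\|_{\Lebesgue^\infty(A)} < 1$, so all its singular values lie in the interval $[1-\|\nabla\scaler\|_\infty,\,1+\|\nabla\scaler\|_\infty]$ and hence $|\det \Jacobian\Phi_{\scaler,y}| \geq (1-\|\nabla\scaler\|_\infty)^{n}$ uniformly on $A$. The bound $|\Phi_{\scaler,y}(x) - x| \leq \scaler(x)$ places the image inside $\Ball_{\scaler}(A)$, so the change of variables delivers $\int_{A} |u(\Phi_{\scaler,y}(x))|^{p}\dif x \leq (1-\|\nabla\scaler\|_\infty)^{-n} \|u\|_{\Lebesgue^{p}(\Ball_{\scaler}(A))}^{p}$. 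Substituting back, using $\int\molli\,\dif y = 1$, and taking $p$-th roots recovers \eqref{math:smoothing:lpbound}.

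The hard part will be justifying the change of variables, i.e.\ the injectivity of $\Phi_{\scaler,y}$ on $A$. If $\Phi_{\scaler,y}(x_{1})=\Phi_{\scaler,y}(x_{2})$, then $x_1 - x_2 = (\scaler(x_2)-\scaler(x_1))y$, and one would like to conclude $|x_1-x_2|\leq \|\nabla\scaler\|_\infty|x_1-x_2|$. This works immediately when $A$ is convex, but for a general measurable $A$ the straight segment between two preimage points need not lie in a region where $|\nabla\scaler|<1$. I would circumvent this either by covering $A$ by convex pieces on which the estimate applies separately, or by invoking the area formula with multiplicity (using that $\Jacobian\Phi_{\scaler,y}$ is uniformly nonsingular on $A$ by the inverse function theorem). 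The final ``in particular'' assertion follows readily: any compact set can be exhausted by sets $A$ on which the gradient hypothesis holds, since $\scaler$ is smooth and $\nabla\scaler$ is locally bounded.
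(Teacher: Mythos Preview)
Your argument matches the paper's: pointwise cofactor bound, Jensen against the probability measure $\molli\,\dif y$, Fubini, then the substitution $z=\Phi_{\scaler,y}(x)$; the paper computes the Jacobian determinant exactly via the rank-one formula $\det(\Id+y\otimes\nabla\scaler)=1+y\cdot\nabla\scaler$ rather than through singular values, but that is cosmetic. Your injectivity concern is well spotted and the paper's proof glosses over it as well; in every subsequent application (see the proof of Theorem~\ref{theorem:globalsmoothing}) the bound $|\nabla\scaler|<1$ in fact holds on all of $\bbR^{n}$, so $\Phi_{\scaler,y}$ is the identity plus a strict contraction and hence a global diffeomorphism, which dissolves the issue.
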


\begin{proof}
 Note that $\Ball_{\scaler}(A)$ is open, being the union of open sets, and hence measurable. 
 We note that for every $x \in \bbR^{n}$,
 \begin{align*}
    \left| \Regu_{\scaler}^{k} \left( {u} \right)_{|x} \right|
    \leq 
    \left( 1 + \| \nabla\scaler \|_{\Lebesgue^{\infty}(A)} \right)^{k}
    \int_{ \bbR^{n} }
    \molli(y)
    |{u}\left( x + \scaler(x) y \right)|
    \;\dif y
    .
 \end{align*}
 The statement is easy in the case $p = \infty$,
 so we only discuss the case $p \in [1,\infty)$ in detail. 
 We assume $A \subseteq \supp \phi$ without loss of generality. 
 Using Jensen's inequality and Fubini's theorem, we find 
 \begin{align*}
     \int_{A} 
     \left| \Regu_{\scaler}^{k} \left( {u} \right)_{|x} \right|^{p}
     \dif x
     &
     \leq 
     \left( 1 + \| \nabla\scaler \|_{\Lebesgue^{\infty}(A)} \right)^{pk}
     \int_{A} 
     \left|
     \int_{ \bbR^{n} }
     \molli(y)
     |{u}\left( x + \scaler(x) y \right)|
     \;\dif y
     \right|^{p}
     \dif x
     \\&
     \leq 
     \left( 1 + \| \nabla\scaler \|_{\Lebesgue^{\infty}(A)} \right)^{pk}
     \int_{A} 
     \int_{ \bbR^{n} }
     \molli(y)
     |{u}\left( x + \scaler(x) y \right)|^{p}
     \;\dif y
     \dif x
     \\&
     =
     \left( 1 + \| \nabla\scaler \|_{\Lebesgue^{\infty}(A)} \right)^{pk}
     \int_{ \bbR^{n} }
     \molli(y)
     \int_{A} 
     |{u}\left( x + \scaler(x) y \right)|^{p}
     \;\dif x
     \dif y
     .
 \end{align*}
 By a change of variables, 
 \begin{align*}
     \int_{A} |{u}\left( x + \scaler(x) y \right)|^{p} \;\dif x
     &
     =
     \int_{\Phi_{\scaler,y}(A)} |{u}\left( z \right)|^{p} \left|\det( \Id + y \otimes \nabla\scaler_{|x} )^{-1}\right| \;\dif z
     \\&
     \leq 
     \left( 1 - \|\nabla\scaler\|_{\Lebesgue^{\infty}(A)} \right)^{-n}
     \int_{\Ball_{\scaler}(A)} |{u}\left( z \right)|^{p} \;\dif z
     .
 \end{align*}
 Here, we have used $\det( \Id + y \otimes \nabla\scaler_{|x} ) = 1 + y \cdot \nabla\scaler_{|x}$. 
 The desired result thus follows. 
\end{proof}

\begin{lemma}\label{lemma:smoothing:retainsregularity}
 For all $x \in \bbR^{n}$ and all ${u} \in \Cont^{m}\Lambda^{k}(\bbR^{n})$ that are $m$-times differentiable over $\Ball_{\scaler}(x)$ we have 
 \begin{gather*}
  | \nabla^{m} \Regu_{\scaler}^{k} {u}_{|x} |
  \leq 
  \sum_{b=1}^{m} \Cretainregularity{k}{m}{b}{\scaler} \max_{ z \in \Ball_{\scaler(x)} } | \nabla^{b} {u}_{|z} |.
 \end{gather*}
 Here, $\Cretainregularity{k}{m}{b}{\scaler}$ depends only on $m$, $k$, and $l$, and on the magnitude of the derivatives of $\scaler$ up to order $m$. 
 Specifically, 
 \begin{gather*}
    | \Regu_{\scaler}^{k} {u}_{|x} |
    \leq
    \left( 1 + \|\nabla\scaler\|_{\Lebesgue^\infty(A)} \right)^{k}
    \max_{ z \in \Ball_{\scaler(x)} } | {u}_{|z} |
    .
 \end{gather*}
\end{lemma}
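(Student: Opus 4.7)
The plan is to work from the integral representation~\eqref{math:smoothing:definition:rewriting} and move derivatives under the integral sign. For fixed $y$ in the compact support of $\molli$, the integrand $\molli(y)\, {u}_\sigma(x+\scaler(x)y)\,(\Phi_{\scaler,y}^{\ast}\cartanx^{\sigma})_{|x}$ is $\Cont^{m}$ in $x$ on any open set where ${u}$ is $\Cont^{m}$ (recall $\scaler \in \Cont^{\infty}$), and all its $x$-derivatives of order at most $m$ admit a $y$-integrable majorant on $\supp\molli$, controlled by the pointwise magnitudes of the derivatives of $\scaler$ and of ${u}_{\sigma}$ on $\overline{\Ball_{\scaler(x)}(x)}$. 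Dominated convergence then justifies exchanging $\nabla^{m}$ with the integral.

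The pointwise statement (the ``specifically'' inequality) follows immediately by pulling absolute values into the integral in~\eqref{math:smoothing:definition:rewriting}, inserting the bound $|(\Phi_{\scaler,y}^{\ast}\cartanx^{\sigma})_{|x}| \le (1+|y|\,\|\nabla\scaler\|_{\Lebesgue^\infty})^{k} \le (1+\|\nabla\scaler\|_{\Lebesgue^\infty})^{k}$ already recorded in the excerpt, observing that $x+\scaler(x)y \in \overline{\Ball_{\scaler(x)}(x)}$ for $|y|\le 1$, and using $\int\molli = 1$.

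For the higher-order bound I would apply Leibniz to split the $m$ derivatives between the factors $F_{\sigma}(x,y) := {u}_{\sigma}(x+\scaler(x)y)$ and $G_{\sigma}(x,y) := (\Phi_{\scaler,y}^{\ast}\cartanx^{\sigma})_{|x}$. The multivariate chain rule (Fa\`a di Bruno) expresses $\nabla^{a}_{x}F_{\sigma}(x,y)$ as a linear combination of $(\nabla^{b}{u}_{\sigma})(x+\scaler(x)y)$ for $b \le a$, whose coefficients are polynomials in $y$ and in partial derivatives of $\scaler$ of order at most $a$. Similarly, $\nabla^{m-a}_{x}G_{\sigma}(x,y)$ is polynomial in $y$ and in partial derivatives of $\scaler$ of order at most $m-a$, since $G_{\sigma}$ is the pullback of a constant-coefficient form along $\Phi_{\scaler,y}$, whose Jacobian $\Id+y\otimes\nabla\scaler$ is polynomial in $y$ and smooth in $x$. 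Because $|y|\le 1$ throughout $\supp\molli$ and $\int\molli=1$, integrating in $y$ produces bounded constants depending only on $k$, $m$, $\molli$, and the $\Cont^{m}$-norm of $\scaler$; evaluation points remain in $\overline{\Ball_{\scaler(x)}(x)}$, yielding the claimed sum.

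No conceptual difficulty arises; the proof is essentially a bookkeeping exercise. The main effort lies in organizing the Fa\`a di Bruno expansion to identify, term by term, the precise form of the constants $\Cretainregularity{k}{m}{b}{\scaler}$ and to verify that only derivatives of ${u}_{\sigma}$ up to order $m$ appear, all evaluated at points of $\overline{\Ball_{\scaler(x)}(x)}$. (The separate pointwise inequality in the statement supplies the $b=0$ contribution that Leibniz produces when all derivatives fall on $G_{\sigma}$.)
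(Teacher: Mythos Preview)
Your proposal is correct and follows essentially the same route as the paper: differentiate under the integral via dominated convergence, then control $\nabla^{m}$ of the integrand by a chain-rule expansion in terms of derivatives of ${u}$ and of $\scaler$, and finally integrate in $y$ using $\int\molli=1$. The only organizational difference is that the paper applies a packaged ``generalized chain rule for pullbacks of tensors'' directly to $\Phi_{\scaler,y}^{\ast}{u}$ (citing an external reference), whereas you first split via Leibniz into the scalar factor ${u}_{\sigma}(x+\scaler(x)y)$ and the form factor $(\Phi_{\scaler,y}^{\ast}\cartanx^{\sigma})_{|x}$ and then invoke Fa\`a di Bruno on the first; these yield the same estimate and the same dependence of the constants.
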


\begin{proof}
 We can write ${u} \in \Cont^{m}\Lambda^{k}(\Ball_{\scaler(x)})$ in the form 
 \begin{gather*}
  {u}
  =
  \sum_{ \sigma \in \Sigma(k,n) }
  {u}_{\sigma} \cartanx^{\sigma},
  \quad 
  {u}_{\sigma} \in \Cont^{m}(\Ball_{\scaler(x)})
  .
 \end{gather*}
 We consider the absolutely convergent integral 
 \begin{gather*}
    \left( \Regu_{\scaler}^{k} {u} \right)_{|x} = \int_{ \bbR^{n} } \molli(y) \left( \Phi_{\scaler,y}^{\ast} {u} \right)_{|x} \;\dif y. 
 \end{gather*}
 Applying the dominated convergence theorem to exchange differentiation and integration, we find 
 \begin{gather*}
    \left| \nabla^{m} \left( \Regu_{\scaler}^{k} {u} \right)_{|x} \right|
    \leq 
    \int_{ \bbR^{n} } \molli(y) \left| \nabla^{m} \left( \Phi_{\scaler,y}^{\ast} {u} \right)_{|x} \right| \;\dif y
    .
 \end{gather*}
 The generalized chain rule for the pullback of tensors (see~\cite{licht2022higher}) shows that 
 \begin{align*}
    & 
    | \nabla^{m} \left( \Phi_{\scaler,y}^{\ast} {{u}} \right)_{|x} | \\&
    =
    \sum_{ \substack{
        p_0, p_1, \dots, p_d \in \bbN_0,
        \\
        p_0 + p_1 + \dots + p_d = m 
        \\
        0 \leq b \leq p_0 
        \\
        b_1, b_2, \ldots, b_{p_0} \in \bbN
        \\
        b_1 + b_2 + \cdots + b_{p_0} = b 
        \\
        b_1 + 2 b_2 + \cdots + p_0 b_{p_0} = p_0 
    } }
\frac{m!}{ \prod_{i=1}^{k} p_i! \prod_{j=1}^{p_0} b_j! (j!)^{b_j} }
    | \nabla^{b} {u}_{|\Phi_{\scaler,y}(x)} |
    \prod_{j=1}^{p_0}
    \left| \nabla^{j} \Phi_{\scaler,y|x} \right|^{b_j}
    \prod_{i=1}^{k} | \nabla^{ p_i+1 }\Phi_{\scaler,y|x} | 
    .
 \end{align*}
 Lastly, 
\begin{align*}
    \int_{ \Ball_1(0) }
    \molli(y)
    | \nabla^{b} {u}_{|\Phi_{\scaler,y}(x)} |
    \;\dif y
\leq 
    \| \nabla^{b} {u} \|_{\Lebesgue^{\infty}(\Ball_{\scaler}(x))}
    .
 \end{align*}
 The desired estimate follows and the proof is complete.
\end{proof}

The mollification operator is inspired by the standard convolution-based mollification and smoothes the differential form out over the interior of the support of $\scaler$.
Specifically, we use the following estimate. 

\begin{lemma}\label{lemma:smoothing:convolutionsmoothing}
    Let $p \in [1,\infty]$
    and let ${u} \in \Lebesgue^{p}\Lambda^{k}(\bbR^{n})$. 
    For any $x \in \bbR^{n}$ with $\scaler(x) > 0$, 
    the differential form $\Regu_{\scaler}^{k} {u}$ is smooth in a neighborhood of $x$. 
    Moreover, 
    \begin{align*}
        \left| \Regu_{\scaler}^{k} {u}_{|x} \right|
        \leq 
        |\Ball_1|^{\frac{p-1}{p}}
        \|\molli\|_{\Lebesgue^{\infty}(\bbR^{n})}
        \left( 1 + \left| \nabla\scaler_{|x} \right| \right)^{k}
        \cdot 
        \scaler(x)^{-\frac{n}{p}} 
        \| {u} \|_{\Lebesgue^{p}\Lambda^{k}(\Ball_{\scaler}(x))}
        .
    \end{align*}
\end{lemma}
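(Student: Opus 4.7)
The strategy is to reduce the statement to a classical convolution-type estimate. For the smoothness claim, I use that $\scaler$ is smooth and positive at $x$, hence bounded away from zero on an open neighborhood, which reduces the argument to differentiation under the integral sign in a convolution representation of $\Regu_\scaler^k u$. For the quantitative bound I start from the pointwise integral representation \eqref{math:smoothing:definition:rewriting}, pull out the pullback factor using $|(\Phi_{\scaler,y}^\ast \cartanx^\sigma)_{|x}|\le (1+|y|\cdot|\nabla\scaler_{|x}|)^k$ together with the constraint $|y|\le 1$ on $\supp\molli$, substitute $z=x+\scaler(x)y$, and finally apply H\"older's inequality to the resulting integral against the bounded weight $\molli((z-x)/\scaler(x))$.

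For the smoothness, pick a compact neighborhood $V$ of $x$ on which $\scaler\ge c>0$. Substituting $z=x+\scaler(x)y$ in \eqref{math:smoothing:definition} rewrites
\[
 \Regu_\scaler^k u_{|x} \;=\; \int_{\bbR^n} K(x,z)\,u(z)\,\dif z,
\]
where $K(x,\cdot)$ is supported, for $x\in V$, in a common compact set, and $K(\cdot,z)$ is smooth with $x$-derivatives locally uniformly dominated. The dominated convergence theorem permits differentiation under the integral sign to arbitrary order, so $\Regu_\scaler^k u$ is smooth on $V$.

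For the quantitative estimate, the pullback bound and $|y|\le 1$ on $\supp\molli$ give
\[
 |\Regu_\scaler^k u_{|x}| \;\le\; (1+|\nabla\scaler_{|x}|)^k \int_{\bbR^n}\molli(y)\,|u(x+\scaler(x)y)|\,\dif y.
\]
Substituting $z=x+\scaler(x)y$ yields $\dif y=\scaler(x)^{-n}\,\dif z$ and support $z\in\Ball_{\scaler(x)}(x)=\Ball_\scaler(x)$. Bounding $\molli((z-x)/\scaler(x))\le\|\molli\|_{\Lebesgue^\infty(\bbR^n)}$ and then using H\"older's inequality with exponents $p$ and $p'=p/(p-1)$ produces
\[
 \int_{\Ball_\scaler(x)} |u(z)|\,\dif z \;\le\; |\Ball_{\scaler(x)}(x)|^{1/p'}\,\|u\|_{\Lebesgue^p\Lambda^k(\Ball_\scaler(x))} \;=\; \bigl(|\Ball_1|\,\scaler(x)^n\bigr)^{(p-1)/p}\,\|u\|_{\Lebesgue^p\Lambda^k(\Ball_\scaler(x))}.
\]
Combining the three estimates and simplifying the exponent of $\scaler(x)$ via $-n+n(p-1)/p=-n/p$ delivers the stated inequality, with the endpoints $p=1$ and $p=\infty$ covered by the usual conventions.

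The only mild subtlety is book-keeping of the exponents and confirming that the smoothness argument applies to merely $\Lebesgue^p$ data; both become routine once the change of variables is carried out, so I do not anticipate any significant obstacle.
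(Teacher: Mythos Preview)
Your proposal is correct and follows essentially the same route as the paper: both arguments perform the change of variables $z=x+\scaler(x)y$, extract the pullback factor $(1+|\nabla\scaler_{|x}|)^k$ using $|y|\le 1$ on $\supp\molli$, invoke dominated convergence for smoothness, and finish with H\"older's inequality against the constant bound $\|\molli\|_{\Lebesgue^\infty}$ on the ball $\Ball_{\scaler(x)}(x)$. The only cosmetic difference is that the paper carries out the substitution before bounding the pullback, whereas you bound first and substitute second.
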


\begin{proof}
    Since $\scaler(x) > 0$, then the change of variables $z = x + \scaler(x) y$ yields 
    \begin{align*} &
        \Regu_{\scaler}^{k}
        \left( {u} \right)_{|x}
        =
        \sum_{ \sigma \in \Sigma(k,n) }
        \int_{ \bbR^{n} }
        \molli\left( y \right)
        {u}_{\sigma}\left( x + \scaler(x) y \right)
        \cdot
        \bigwedge_{i=1}^{k} \cartanx_{\sigma(i)} \left( \Id + y \otimes \nabla\scaler_{|x} \right)
        \;\dif y
        \\&=
        \sum_{ \sigma \in \Sigma(k,n) }
        \int_{ \bbR^{n} }
        \scaler(x)^{-n} \molli\left( \scaler(x)^{\inv} ( z - x ) \right)
        {u}_{\sigma}\left( z \right)
        \cdot
        \bigwedge_{i=1}^{k}  \cartanx_{\sigma(i)} \left( \Id + \scaler(x)^{-1}(z-x) \otimes \nabla\scaler_{|x} \right)
        \;\dif z
        .
    \end{align*}
    Consequently, 
    \begin{align*}
        \left| \Regu_{\scaler}^{k} ( {u} )_{|x} \right|
        &\leq 
        \int_{ \bbR^{n} }
        \scaler(x)^{-n} \molli\left( \scaler(x)^{\inv} ( z - x ) \right)
        |{u}\left( z \right)|
        \cdot
        \left| \Id + \scaler(x)^{-1}(z-x) \otimes \nabla\scaler_{|x} \right|^{k}
        \;\dif z
        \\&\leq 
        \int_{ \bbR^{n} }
        \scaler(x)^{-n} \molli\left( \scaler(x)^{\inv} ( z - x ) \right)
        |{u}\left( z \right)|
        \cdot
        \left( 1 + \left| \nabla\scaler_{|x} \right| \right)^{k}
        \;\dif z
        .
    \end{align*}
    By the dominated convergence theorem, 
    if ${u}$ is locally integrable, then $\Regu_{\scaler}^{k} {u}$ is smooth in a neighborhood of $x$. 
    Using H\"older's inequality, we get the estimate 
    \begin{align*}
        \left| \Regu_{\scaler}^{k} {u}_{|x} \right|
        \leq 
        \scaler(x)^{-n} 
        \|\molli\|_{\Lebesgue^{\infty}(\bbR^{n})}
        \cdot 
        \left( 1 + \left| \nabla\scaler_{|x} \right| \right)^{k}
        |\Ball_1|^{\frac{p-1}{p}}
        \scaler(x)^{n\frac{p-1}{p}} 
        \cdot 
        \| {u} \|_{\Lebesgue^{p}\Lambda^{k}(\Ball_{\scaler}(x))}
        .
    \end{align*}
    The desired estimate follows.
\end{proof}

We close the discussion of the smoothing operator in Euclidean space 
with a formal proof that the mollification operator $\Regu_{\scaler}^{k}$ commutes with the exterior derivative.

\begin{lemma}\label{lemma:smoothing:commutativity}
 We have $\cartan^{k} \Regu_{\scaler}^{k} {u} = \Regu_{\scaler}^{k+1} \cartan^{k} {u}$
 for ${u} \in \Sobdiff^{1,1}_{\loc}\Lambda^{k}(\bbR^{n})$.
\end{lemma}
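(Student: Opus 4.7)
The plan is to first prove the identity in the smooth case by direct computation and then extend to $\Sobdiff^{1,1}_{\loc}\Lambda^{k}(\bbR^{n})$ by a standard density argument.

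For ${u} \in \Cont^{\infty}\Lambda^{k}(\bbR^{n})$, the key observation is that the translation $\Phi_{\scaler,y}$ is a smooth map for each fixed $y$, so the naturality of the exterior derivative yields $\cartan \Phi_{\scaler,y}^{\ast} {u} = \Phi_{\scaler,y}^{\ast} \cartan {u}$ pointwise. The integrand in~\eqref{math:smoothing:definition} has compact support in $y$ (since $\molli$ does) and depends smoothly on $x$, so dominated convergence justifies differentiation under the integral sign. Therefore,
\begin{align*}
    \cartan^{k} \Regu_{\scaler}^{k} {u}_{|x}
    &= \cartan^{k} \int_{\bbR^{n}} \molli(y) \left( \Phi_{\scaler,y}^{\ast} {u} \right)_{|x} \dif y
     = \int_{\bbR^{n}} \molli(y) \cartan^{k} \left( \Phi_{\scaler,y}^{\ast} {u} \right)_{|x} \dif y \\
    &= \int_{\bbR^{n}} \molli(y) \left( \Phi_{\scaler,y}^{\ast} \cartan^{k} {u} \right)_{|x} \dif y
     = \Regu_{\scaler}^{k+1} \cartan^{k} {u}_{|x}.
\end{align*}

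For the general case, let ${u} \in \Sobdiff^{1,1}_{\loc}\Lambda^{k}(\bbR^{n})$ and set ${u}_{\eps} := {u} \star \molli_{\eps}$, which lies in $\Cont^{\infty}\Lambda^{k}(\bbR^{n})$. A standard computation shows that the weak exterior derivative commutes with convolution, i.e., $\cartan^{k} {u}_{\eps} = (\cartan^{k} {u}) \star \molli_{\eps}$; hence by Theorem~\ref{math:smoothing:standardresult}, both ${u}_{\eps} \to {u}$ and $\cartan^{k} {u}_{\eps} \to \cartan^{k} {u}$ in $\Lebesgue^{1}_{\loc}$. By the smooth case,
\begin{align*}
    \cartan^{k} \Regu_{\scaler}^{k} {u}_{\eps} = \Regu_{\scaler}^{k+1} \cartan^{k} {u}_{\eps}.
\end{align*}
Applying Lemma~\ref{lemma:smoothing:lpbound} with $p = 1$ on any bounded measurable set $A$ (where $|\nabla\scaler|<1$ holds after possibly rescaling $\scaler$, or locally) gives $\Regu_{\scaler}^{k} {u}_{\eps} \to \Regu_{\scaler}^{k} {u}$ and $\Regu_{\scaler}^{k+1} \cartan^{k} {u}_{\eps} \to \Regu_{\scaler}^{k+1} \cartan^{k} {u}$ in $\Lebesgue^{1}_{\loc}$.

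To conclude, test against an arbitrary $\eta \in \Cont^{\infty}_{c}\Lambda^{n-k-1}(\bbR^{n})$: integration by parts on the smooth level gives
\begin{align*}
    \int_{\bbR^{n}} \Regu_{\scaler}^{k} {u}_{\eps} \wedge \cartan \eta = (-1)^{k+1} \int_{\bbR^{n}} \Regu_{\scaler}^{k+1} \cartan^{k} {u}_{\eps} \wedge \eta,
\end{align*}
and passing $\eps \to 0$ using the $\Lebesgue^{1}_{\loc}$ convergence on the (bounded) support of $\eta$ shows that $\Regu_{\scaler}^{k+1} \cartan^{k} {u}$ is the weak exterior derivative of $\Regu_{\scaler}^{k} {u}$, which is the desired identity. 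The main subtlety is verifying the $\Lebesgue^{1}_{\loc}$ continuity of $\Regu_{\scaler}^{k}$ uniformly in $\eps$; this is handled by Lemma~\ref{lemma:smoothing:lpbound} applied on a bounded neighborhood containing $\supp \eta \cup \Ball_{\scaler}(\supp \eta)$, which is compact since $\scaler$ has compact support.
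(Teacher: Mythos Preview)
Your proof is correct but follows a different route from the paper. The paper argues directly for $u \in \Sobdiff^{1,1}_{\loc}\Lambda^{k}(\bbR^{n})$: it pairs $\cartan R_\phi^k u$ against a compactly supported test form $v$, integrates by parts to move $\cartan$ onto $v$, swaps the $x$- and $y$-integrals by Fubini, and then uses that pullback along the smooth map $\Phi_{\phi,y}$ commutes with the weak exterior derivative. No approximation or density is needed.

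Your approach (smooth case first, then density via $u_\eps = u \star \molli_\eps$) is also standard and valid in principle, but the appeal to Lemma~\ref{lemma:smoothing:lpbound} has a small gap: that lemma requires $|\nabla\phi| < 1$ over $A$, which is not assumed for general $\phi$ in this section and cannot be arranged by ``rescaling $\phi$'' since the statement concerns a fixed $\phi$. A clean fix is to invoke the global $L^p$-continuity~\eqref{math:smoothing:regularizer:Lp} together with a cutoff: since $\phi$ has compact support, the value of $R_\phi^k u$ on any bounded set depends only on $u$ restricted to a bounded set, so one may replace $u_\eps - u$ by $\chi(u_\eps - u) \in L^1(\bbR^n)$ for a suitable cutoff $\chi$ and then apply~\eqref{math:smoothing:regularizer:Lp} with $p=1$. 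The paper's direct weak computation sidesteps this bookkeeping entirely, which is its main advantage; your approach has the merit of making the smooth case fully explicit.
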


\begin{proof}
 Let ${u} \in \Lebesgue^{1}_{\loc}\Lambda^{k}(\bbR^{n})$
 such that $\cartan^{k}{u} \in \Lebesgue^{1}_{\loc}\Lambda^{k+1}(\bbR^{n})$,
 and let ${v} \in \Cont^{\infty}\Lambda^{n-k-1}(\bbR^{n})$ have compact support. 
 Then 
 \begin{align*}
  \int \cartan^{k}_{x} \Regu_{\scaler}^{k} {u} \wedge {v}_{|x} 
  &=
  (-1)^{k+1}
  \int \Regu_{\scaler}^{k} {u} \wedge \cartan^{n-k-1}_{x} {v}_{|x} 
  \\&=
  (-1)^{k+1}
  \int \int_{\bbR^{n}} \molli(y) \left( \Phi_{\scaler,y}^{\ast} {u} \right)_{|x} \dif y \wedge \cartan^{n-k-1}_{x} {v}_{|x} 
  \\&=
  (-1)^{k+1}
  \int_{\bbR^{n}} \int \molli(y) \left( \Phi_{\scaler,y}^{\ast} {u} \right)_{|x} \wedge \cartan^{n-k-1}_{x} {v}_{|x} \dif y 
  \\&=
  \int_{\bbR^{n}} \int \molli(y) \cartan_{x}^{k} \left( \Phi_{\scaler,y}^{\ast} {u} \right)_{|x} \wedge {v}_{|x} \dif y 
  .
 \end{align*}
 Since $\Phi_{\scaler,y}$ is smooth,
 we have $\cartan^{k}_{x} \Phi_{\scaler,y} {u} = \Phi_{\scaler,y} \cartan^{k}_{x} {u}$.
 Unwinding the changes, we find 
 \begin{align*}
  \int_{\bbR^{n}} \int \molli(y) \left( \Phi_{\scaler,y}^{\ast} \cartan_{x}^{k} {u} \right)_{|x} \wedge {v}_{|x} \dif y 
  &=
  \int \int_{\bbR^{n}} \molli(y) \left( \Phi_{\scaler,y}^{\ast} \cartan_{x}^{k} {u} \right)_{|x} \dif y \wedge {v}_{|x} 
  =
  \int \Regu_{\scaler}^{k+1}\left( \cartan_{x}^{k} {u} \right) \wedge {v}_{|x} 
  .
 \end{align*}
 The result follows.
\end{proof}

We discuss two examples of how to choose $\phi$. 

\begin{example}\label{example:smoothing:euclidean}
 The special case $\scaler(x) = \eps$ recovers uniform convolutional smoothing with radius $\epsilon > 0$. 
\end{example}

\begin{example}\label{example:smoothing:existenceofgeometricsetting}
 Suppose that $\Inside, \Outside \subset \bbR^{n}$ are open with $\overline{\Inside} \subset \Outside$.
 We are interested in mollification that leaves the differential form invariant outside $\Outside$ and has a convolution radius at least $\epsilon > 0$ over $\Inside$.
 By the smooth version of Urysohn's lemma, 
 there exists a smooth real-valued function mapping onto $[0,1]$,
 identical to $1$ over $\Inside$ and identical to $0$ outside of $\Outside$. 
 By scaling this function with $\eps > 0$ small enough, we get the desired function. 
\end{example}

\section{Smoothed projections}\label{sec:smoothedprojections}

We are now in the position to define the smoothed projections over compact manifolds.
The smoothed projection is composed of three stages. 
In the first stage, we use localized smoothing operators over manifolds.
In the second stage, we apply the canonical interpolant to map onto the finite element space.
These stages provide a uniformly bounded commuting interpolant, 
which is not yet idempotent though. 
The third stage corrects the perturbation on the finite element space,
thus finally giving the idempotence. 

\subsection{Preparations}
Let $\Manifold$ be a smooth manifold with Riemannian metric $\metric$.
A \emph{uniformly bi-Lipschitz atlas} of $\Manifold$ is a collection $\calA = \{\Psi_{i}\}_{i \in I}$ of coordinate charts
\begin{align*}
 \Psi_{i} : U_{i} \subseteq \Manifold \rightarrow \hat U_{i} \subseteq \bbR^{n}
\end{align*}
such that for some constants $C_{\Psi} > 0$ and all $ i \in I $ we have 
\begin{subequations} \label{math:quasiisometrybounds}
\begin{gather}
 C_{\Psi}^{-1}
 d_{\metric}(x,y) 
 \leq 
 \left| \Psi_{i}(x) - \Psi_{i}(y) \right| 
 \leq 
 C_{\Psi} d_{\metric}(x,y)
,
 \quad 
 x, y \in U_{i}
 .
\end{gather}
\end{subequations}
The coordinate charts thus have bounded Jacobians, 
\begin{align*}
 \|\Jacobian\Psi_{i}     \|_{\Lebesgue^{\infty}(     U_{i})} \leq C_{\Psi},
 \quad
 \|\Jacobian\Psi_{i}^{-1}\|_{\Lebesgue^{\infty}(\hat U_{i})} \leq C_{\Psi}, 
\end{align*}
and map balls into balls,
\begin{gather*}
 \Psi_{i}\left( \Ball_{\delta}(x) \cap U_{i} \right) 
 \subseteq 
 \Ball_{C_{\Psi} \delta }(\Psi_{i}(x)) \cap \hat U_{i}, \quad x \in U_{i}, 
 \\
 \Psi_{i}^{-1}\left( \Ball_{\delta}(\hat x) \cap \hat U_{i} \right) 
 \subseteq 
 \Ball_{C_{\Psi} \delta }(\Psi_{i}^{-1}(\hat x)) \cap U_{i}, \quad \hat x \in \hat U_{i}, 
\end{gather*}
Intuitively, the coordinate charts in such an atlas do not distort the metric too much. 

\begin{theorem}\label{theorem:existenceofquasiisometricatlas}
 Every compact Riemannian manifold has a uniformly bi-Lipschitz atlas
 whose local coordinate charts map onto precompact sets. 
\end{theorem}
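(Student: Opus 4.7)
The plan is to build coordinate charts via the exponential map at each point of $\Manifold$, exploiting compactness to make the metric distortion in normal coordinates uniform in the base point.

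First I would invoke two standard facts for compact Riemannian manifolds: both the injectivity radius and the convexity radius of $(\Manifold,\metric)$ are bounded below by some $r_{\ast} > 0$. Consequently, for every $p \in \Manifold$ the exponential map $\exp_{p}$ is a diffeomorphism from the Euclidean ball $\Ball_{r_{\ast}}(0) \subseteq T_{p}\Manifold$ onto a geodesically convex neighborhood of $p$. Identifying each $T_{p}\Manifold$ with $\bbR^{n}$ isometrically via an orthonormal frame yields candidate charts $\Psi_{p} := \exp_{p}^{-1}$ defined on $\Ball_{r}(p,\metric)$ for any $r < r_{\ast}$.

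Next I would control the metric in those coordinates. In normal coordinates centered at $p$ one has $\metric_{ij}(0) = \delta_{ij}$, and since $\metric$ is smooth and $\Manifold$ compact, the coefficients $\metric_{ij}(x)$ depend continuously on both $p$ and $x$, uniformly in $p$. Thus, for any fixed $\epsilon \in (0,1/2)$, I can pick a single $r_{0} \in (0, r_{\ast})$ \emph{independent} of $p$ so that the eigenvalues of $\metric_{ij}(x)$ all lie in $[1-\epsilon, 1+\epsilon]$ for every $x \in \Ball_{r_{0}}(0)$ and every $p \in \Manifold$. This is the step where compactness of $\Manifold$ enters essentially.

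The third step is to convert this pointwise comparison of metrics into bi-Lipschitz distance bounds. Within the convex geodesic ball $\Ball_{r_{0}}(p,\metric)$, the distance $d_{\metric}(x,y)$ is realized as the infimum of $\metric$-lengths of curves that remain inside the ball. Comparing $\metric$-length with Euclidean length on the chart via the pointwise metric bound then yields
\[
    (1-\epsilon)^{\frac{1}{2}} \left| \Psi_{p}(x) - \Psi_{p}(y) \right|
    \leq
    d_{\metric}(x,y)
    \leq
    (1+\epsilon)^{\frac{1}{2}} \left| \Psi_{p}(x) - \Psi_{p}(y) \right|,
\]
the upper bound using the straight Euclidean segment in the chart (which lies in $\Ball_{r_{0}}(0)$ by convexity) and the lower bound any connecting curve inside the convex neighborhood. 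Setting $C_{\Psi} := (1-\epsilon)^{-1/2}$ supplies a uniform constant. Compactness then yields a finite subcover of $\{\Ball_{r_{0}}(p,\metric)\}_{p \in \Manifold}$, and the associated charts give a uniformly bi-Lipschitz atlas with images in the precompact ball $\Ball_{r_{0}}(0) \subseteq \bbR^{n}$.

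The main obstacle is arranging that both $r_{0}$ and the bi-Lipschitz constant are uniform across all base points: this reduces to uniform smoothness estimates for $\metric_{ij}$ in normal coordinates, which in turn reduce to compactness of $\Manifold$ and smoothness of $\metric$. Once convexity radius, normal coordinates, and the pointwise metric comparison are in place, the remaining steps are routine.
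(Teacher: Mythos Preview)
Your argument is correct, but it takes a genuinely different route from the paper. The paper does not use the exponential map, normal coordinates, or the convexity radius at all. Instead it starts from an \emph{arbitrary} finite smooth atlas $\{\Psi_i : U_i \to \hat U_i\}$ with precompact $U_i$, invokes Lebesgue's number lemma to find $\delta>0$ so that the shrunken domains $U_{i,\delta} = \{x \in U_i : \dist(x,\partial U_i) > \delta\}$ still cover $\Manifold$, and then observes that each $\Psi_i$ and its inverse have uniformly bounded derivatives on the compact set $\overline{U_{i,\delta}} \subset U_i$; finiteness of the atlas then gives a single constant $C_\Psi$.

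The trade-offs: the paper's argument is more elementary and flexible, since it requires no Riemannian machinery beyond the geodesic distance and works starting from any atlas one already has in hand. Your approach, by contrast, produces explicit charts (geodesic normal coordinates) with a bi-Lipschitz constant that can be taken arbitrarily close to $1$; this is a sharper quantitative conclusion and could be advantageous if later estimates were sensitive to the size of $C_\Psi$. The cost is that you must invoke the positivity of the injectivity and convexity radii and the uniform control of the metric coefficients in normal coordinates, which are standard but nontrivial facts about compact Riemannian manifolds.
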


\begin{proof}
 Let $\Manifold$ be a compact Riemannian manifold. 
 There exists a finite number of diffeomorphisms $\Psi_{i} : U_{i} \rightarrow \hat U_{i}$
 where $U_{i} \subseteq \Manifold$ and $\hat U_{i} \subseteq \bbR^{n}$ are precompact open sets 
 and the collection $U_{i}$ is an open cover of $\Manifold$. 
 By Lebesgue's number lemma, there exists $\delta > 0$ such that every open ball of radius $2\delta$ is a subset of some $U_{i}$.
 Hence every open ball of radius $\delta$ is a subset of one of the sets 
 \begin{align*}
  U_{i,\delta} := \left\{\; x \in U_{i} \suchthat* \dist( x, \partial U_{i}) > \delta \;\right\}
  .
 \end{align*}
 We define $\hat U_{i,\delta} := \Psi_{i}( U_{i} )$. 
 By construction, 
 the restricted mappings $\Psi_{i} : U_{i,\delta} \rightarrow \Psi_{i}(U_{i,\delta})$ constitute an atlas of $\Manifold$.
 Each restriction $\Psi_{i} : U_{i,\delta} \rightarrow \hat U_{i,\delta}$
 and their inverses have uniformly bounded total derivatives.
 Since the atlas is finite, it is uniformly bi-Lipschitz. 
\end{proof}

Suppose that we have fixed a uniformly bi-Lipschitz atlas $\calA = \{\Psi_{i}\}_{i \in I}$ of coordinate charts $\Psi_{i} : U_{i} \subseteq \Manifold \rightarrow \hat U_{i} \subseteq \bbR^{n}$. 
We fix smooth non-negative functions $\scaler_{i} : U_{i} \rightarrow \bbR$,
and we write $\hat\scaler_{i} = \scaler_{i} \circ \Psi_{i}^{-1}$ for their transformations onto the manifold. 

We call the collection $\{ \scaler_{i} \}_{i \in I}$ an \emph{$\epsilon$-bump system relative to $\calA$} 
if the following conditions are satisfied for all $i \in I$:
\begin{enumerate}
 \item $\supp\scaler_{i} \subseteq U_{i}$,
 \item $\max \scaler_{i} = \eps$,
 \item $\bigcup_{\hat x \in \hat U_{i}} \Ball_{\hat\scaler_{i}(\hat x)}(\hat x) \subseteq \hat U_{i}$ 
 \item $\interior\left\{\; x \in \supp\scaler_{i} \suchthat* \scaler(x) = \eps \;\right\}$ is an open cover of $\Manifold$.
\end{enumerate}
We say that $\{ \scaler_{i} \}_{i \in I}$ is $L$-Lipschitz if for all $i \in I$:
\begin{align*}
 |\nabla\scaler_{i}| \leq L.
\end{align*}

\begin{theorem}\label{theorem:bumpsystem}
    Let $\Manifold$ be a Riemannian manifold and let $\calA$ be a finite bi-Lipschitz atlas.
    For any $\epsilon > 0$ small enough, there exists an $\epsilon$-bump system.
    Given $L > 0$, there exists $\epsilon > 0$ small enough 
    such that there exists an $L$-Lipschitz $\epsilon$-bump system. 
\end{theorem}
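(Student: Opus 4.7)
The plan is to construct each $\scaler_i$ chart by chart via smooth Urysohn's lemma, after first shrinking the cover so that there is uniform room for the bump to taper smoothly to zero before reaching $\partial \hat U_i$.

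First, using compactness of $\Manifold$ and finiteness of $\calA = \{\Psi_i\}_{i \in I}$, I would produce an open refinement $\{V_i\}_{i \in I}$ with each $\overline{V_i}$ compactly contained in $U_i$ and still $\bigcup_{i \in I} V_i = \Manifold$. Each image $\Psi_i(\overline{V_i})$ is then a compact subset of $\hat U_i$, so
\[
    \eps_0 := \min_{i \in I} \dist\left( \Psi_i(\overline{V_i}),\, \partial \hat U_i \right) > 0.
\]
Fix any $\eps \in (0,\eps_0)$. Invoking smooth Urysohn's lemma on $\bbR^n$ (in the spirit of Example~\ref{example:smoothing:existenceofgeometricsetting}), I would construct for each $i$ a smooth $\hat\scaler_i : \hat U_i \to [0,\eps]$ that is identically $\eps$ on $\Psi_i(\overline{V_i})$ and vanishes on the closed set $\{\hat x \in \hat U_i : \dist(\hat x, \partial\hat U_i) \leq \eps\}$ (these two sets are disjoint by the choice $\eps < \eps_0$), and then set $\scaler_i := \hat\scaler_i \circ \Psi_i$.

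Conditions (1)--(4) are then verified directly: (1) holds because $\supp \hat\scaler_i$ sits inside $\{\hat x : \dist(\hat x,\partial \hat U_i) > \eps\}$, which is compactly contained in $\hat U_i$; (2) is built in; (3) is the crucial condition and holds because any $\hat x \in \hat U_i$ with $\hat\scaler_i(\hat x) > 0$ satisfies $\dist(\hat x, \partial \hat U_i) > \eps \geq \hat\scaler_i(\hat x)$, so the Euclidean ball of radius $\hat\scaler_i(\hat x)$ around $\hat x$ lies inside $\hat U_i$; and (4) holds because each $V_i$ is an open subset of $\{x : \scaler_i(x) = \eps\}$ and the $V_i$ cover $\Manifold$.

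For the $L$-Lipschitz refinement, I would realize $\hat\scaler_i$ explicitly in the form $\hat\scaler_i(\hat x) = \eps \cdot \rho(\tilde d_i(\hat x)/\eps_0)$, where $\rho : \bbR \to [0,1]$ is a fixed smooth nondecreasing profile that vanishes on $(-\infty,1/2]$ and equals $1$ on $[1,\infty)$, and $\tilde d_i$ is a $1$-Lipschitz smooth approximation of the Euclidean distance $\hat x \mapsto \dist(\hat x,\partial \hat U_i)$. Then $|\nabla\hat\scaler_i| \leq (\eps/\eps_0)\|\rho'\|_{\Lebesgue^\infty(\bbR)}$, and the uniform bi-Lipschitz bound on $\Psi_i$ yields $|\nabla\scaler_i|_\metric \leq C_\Psi \|\rho'\|_{\Lebesgue^\infty(\bbR)} \eps/\eps_0$; shrinking $\eps$ below a suitable multiple of $L\eps_0/(C_\Psi\|\rho'\|_{\Lebesgue^\infty(\bbR)})$ enforces the $L$-Lipschitz bound. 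The only real obstacle is organizational: tracking that a single positive $\eps_0$ works uniformly across the atlas (immediate from compactness of $\Manifold$ and finiteness of $\calA$) and that the Euclidean-to-manifold gradient comparison loses only the controlled factor $C_\Psi$ afforded by the uniform bi-Lipschitz property. There is no deeper topological or analytic difficulty.
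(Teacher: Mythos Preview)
Your proposal is correct and follows essentially the same approach as the paper: shrink the cover (you use a compact refinement $\{V_i\}$, the paper uses Lebesgue's number lemma to the same effect), build smooth cutoffs supported away from $\partial\hat U_i$ that plateau at height $\eps$ on the smaller cover, and obtain the $L$-Lipschitz bound by scaling. The paper realizes the cutoff concretely as a mollified indicator $\frac{\delta}{4}\,\chi_{\hat U_{i,\delta}}\star\molli_{\delta/2}$ rather than invoking abstract Urysohn plus an explicit profile, but this is a cosmetic difference; the only small care point in your version is to absorb the approximation error between $\tilde d_i$ and the true distance into the thresholds of $\rho$ so that conditions (1)--(4) survive exactly.
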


\begin{proof}
    We let $\calA = \{\Psi_{i}\}_{i \in I}$ be the collection of coordinate charts $\Psi_{i} : U_{i} \subseteq \Manifold \rightarrow \hat U_{i} \subseteq \bbR^{n}$. 
    For any $i \in I$ and $\delta > 0$ we define the closed sets 
    \begin{align*}
        \hat U_{i,\delta} = \left\{\; \hat x \in \hat U_{i} \suchthat* d( \hat x, \partial \hat U_{i} ) \geq \delta \;\right\}.
    \end{align*}
    We let $\chi_{i,\delta} : \hat U_{i} \rightarrow \bbR$ be the indicator function of $\hat U_{i,\delta} \subset \hat U_{i}$. 
    Let $\hat \phi_{i,\delta} := \frac{\delta}{4} \chi_{i,\delta} \star \molli_{\delta/2}$ denote the convolutional mollification multiplied by the factor $\delta/4$.
    We define $\phi_{i,\delta} := \hat\phi_{i,\delta} \circ \Psi_{i}$.
    One verifies that  
    \begin{gather*}
        \supp \phi_{i,\delta} \subseteq \Psi_{i}^{-1}( \hat U_{i,3\delta/2} ) \subseteq U_{i},
        \quad 
        \max \phi_{i,\delta} = \delta/4, 
        \quad 
        \max 
        |\nabla\hat \phi_{i,\delta}| 
        \leq 
        3 \max |\nabla\molli|
        .
\end{gather*}
    In particular, 
    \begin{gather*}
        \interior\left\{\; \hat x \in \supp\hat \phi_{i,\delta} \suchthat* \hat \phi_{i,\delta}(\hat x) = \delta/4 \;\right\} = \hat U_{i,3\delta/2},
        \qquad 
        \forall \hat x \in \supp \hat \phi_{i,\delta} : \Ball_{\delta/4}(\hat x) \subseteq \hat U_{i}.
    \end{gather*}
    We now show that for $\delta > 0$ sufficiently small,
    the sets $\Psi_{i}^{-1}( \hat U_{i,3\delta/2} )$ together constitute an open cover of $\Manifold$. 
    Lebesgue's number lemma implies that there exists $\theta > 0$
    such that for every $x \in \Manifold$ there exists $i \in I$
    with $\Ball_{\theta}(x) \subseteq U_{i}$.
    Note that $x$ then has distance $\theta$ from the boundary of $U_{i}$,
    and hence $\Psi_{i}(x)$ has at least distance $C_{\Psi}^{-1} \theta$ from the boundary of $\hat U_{i}$. 
    If $\delta$ is so small that $3\delta/2 < C_{\Psi}^{-1}\theta$, then $\Psi_{i}(x) \in \hat U_{i,3\delta/2}$. 
    It follows that the sets $\Psi_{i}^{-1}( \hat U_{i,3\delta/2} )$ together constitute an open cover of $\Manifold$
    provided that $\delta < 2C_{\Psi}^{-1}\theta/3$. 
    We choose $\epsilon = \delta/4$.
    Finally, the last statement now follows by multiplying each $\phi_{i}$ with a sufficiently small constant. 
\end{proof}

In light of those results, we henceforth assume for the remainder of this section that 
\begin{align*}
    \calA = \{ \Psi_{i} : \hat U_{i} \rightarrow U_{i} \}_{i \in I}
\end{align*}
is a uniformly bi-Lipschitz atlas with constants $C_{\Psi} > 0$ as in~\eqref{math:quasiisometrybounds}.
Furthermore, for any $\epsilon > 0$ we can assume that $\{ \scaler_{i} \}_{i \in I}$ is an $\epsilon$-bump system relative to that atlas with some $\epsilon > 0$.
We assume that $\epsilon > 0$ is so small that $\{ \scaler_{i} \}_{i \in I}$ is $L$-Lipschitz with $L := \diam_{\metric}(\Manifold)^{-1}/3$.

\subsection{Commuting quasi-interpolation}
Let us fix a smooth triangulation $\Mesh$ of the manifold $\Manifold$. 
We let $\meshfunc$ be the smooth meshsize function, as in Theorem~\ref{theorem:meshfunction}. 
We abbreviate  
\begin{align*}
\hat \meshfunc_{i} 
 := 
\meshfunc \circ \Psi_{i}^{-1} 
 .
\end{align*}
One easily observes that  
\begin{gather*}
 \| \nabla \hat \meshfunc_{i} \|_{ \Lebesgue^{\infty}(\hat U_{i},\metric) } 
 \leq 
 \| \Jacobian \Psi_{i}^{-1} \|_{ \Lebesgue^{\infty}(\hat U_{i},\metric) } 
 \| \nabla \meshfunc \|_{ \Lebesgue^{\infty}(U_{i},\metric) } 
 ,
 \quad 
 \| \hat \meshfunc_{i} \|_{ \Lebesgue^{\infty}(\hat U_{i},\metric) } \leq \Cmsf \max_{T \in \calT} h_{T}
 .
\end{gather*}
We define localized smoothing operators $\hat \Regu^{k}_{i}$ by transferring the regularization operator along the coordinate charts.
If $L > 0$ and $\epsilon > 0$ are small enough, 
then Lemma~\ref{lemma:smoothing:lpbound} shows that we have a well-defined operator 
\begin{gather}\label{math:localsmoothing}
 \hat \Regu_{i}^{k} :
 \Lebesgue^{1}_{\loc}\Lambda^{k}(\Manifold) \rightarrow \Lebesgue^{1}_{\loc}\Lambda^{k}(\Manifold),
 \quad 
 {u} \mapsto 
\begin{cases}
  {u}_{|x}
  & \text{ if } x \notin U_{i},
  \\
  \Psi^{\ast} \Regu_{\hat\scaler_{i}\hat\meshfunc_{i}}^{k} \Psi^{-\ast} {u}_{|x}
  & \text{ if } x \in U_{i}. 
 \end{cases}
\end{gather}
Notice that the smoothing radius in the coordinate chart is controlled 
by the bump function $\hat\scaler_{i}$ as well as by the meshsize function $\hat\meshfunc_{i}$.

We define the global commuting smoothing operators 
as the composition of the smoothing operators associated with our selected coordinate charts:
\begin{align*}
 \Regu^{k} := \Regu^{k}_{\NumberOfCharts} \Regu^{k}_{\NumberOfCharts-1} \cdots \Regu^{k}_{2} \Regu^{k}_{1}
 .
\end{align*}
Again, this operator preserves the class of locally integrable differential forms. 
The principle idea is that due to the composition, we smooth around each point of the manifold 
\emph{sufficiently enough} at least once to guarantee uniform bounds, depending on the local meshsize. 
The main properties of this mapping are summarized as follows.

\begin{theorem}\label{theorem:globalsmoothing}
    We have a linear mapping 
    \begin{align*}
        \Regu^{k} : \Lebesgue^{p}\Lambda^{k}(\Manifold,\metric) \rightarrow \Cont^{\infty}\Lambda^{k}(\Manifold), \quad p \in [1,\infty].
    \end{align*}
    If ${u} \in \Sobdiff^{1,1}_{\loc}\Lambda^{k}(\Manifold,\metric)$, then 
    \begin{align*}
        \cartan \Regu^{k} {u} = \Regu^{k+1} \cartan {u}.
    \end{align*}
    If $p \in [1,\infty]$ with ${u} \in \Lebesgue^{p}\Lambda^{k}(\Manifold,\metric)$,
    and if $\epsilon > 0$ is small enough, 
    then for every $T \in \Mesh$ we have 
    \begin{align*}
        \left\| \Regu^{k}{u} \right\|_{\Cont^{}\Lambda^{k}(T)}
        \leq 
        C
        ( L + \epsilon )^{-\frac n p} 
        h_T^{-\frac n p}
        \| {u} \|_{\Lebesgue^{p}(T^{\ast})}
    \end{align*}
    for some constant $C$ that only depends on $\Manifold$ and the regularity of $\calT$. 
\end{theorem}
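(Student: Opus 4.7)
The plan is to prove the three assertions in turn by pulling each local operator $\hat\Regu_i^k$ back to its Euclidean chart and invoking the lemmas of Section~\ref{sec:smoothing}.

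\emph{Smoothness and mapping property.} Fix $x \in \Manifold$. By condition (iv) of an $\epsilon$-bump system, there is some $i = i(x)$ such that $\scaler_i \equiv \epsilon$ on an open neighborhood $V$ of $x$; on $\Psi_i(V)$ the Euclidean smoothing radius $\hat\scaler_i \hat\meshfunc_i$ is bounded below by $\epsilon\,\Cmsf^{-1}\min_{T\ni x} h_T > 0$. Lemma~\ref{lemma:smoothing:convolutionsmoothing} (and its local smoothness conclusion) shows that after applying $\hat\Regu_i^k$ the form is smooth on $V$, regardless of the mere local integrability of what went into it. For every index $j \neq i$, the operator $\hat\Regu_j^k$ either averages a smooth function with a smooth weight (Lemma~\ref{lemma:smoothing:retainsregularity}) or leaves the form unchanged at the points where $\scaler_j = 0$ (Lemma~\ref{lemma:smoothing:invariantoutside}); in both cases local smoothness is preserved. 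Composing over $i = 1,\dots,\NumberOfCharts$ yields smoothness of $\Regu^k u$ everywhere on $\Manifold$. That $\Regu^k$ sends $\Lebesgue^p\Lambda^k$ into $\Lebesgue^p\Lambda^k$ follows by iterating Lemma~\ref{lemma:smoothing:lpbound} using the hypothesis $|\nabla\scaler_i|\le L$.

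\emph{Commutativity with $\cartan$.} Each Euclidean operator $\Regu_{\hat\scaler_i\hat\meshfunc_i}^k$ commutes with $\cartan$ by Lemma~\ref{lemma:smoothing:commutativity}, and the exterior derivative commutes with pullbacks, so $\hat\Regu_i^k \cartan = \cartan \hat\Regu_i^k$ on $\Sobdiff^{1,1}_{\loc}\Lambda^{k}(\Manifold)$. Because $\Regu^k$ and $\Regu^{k+1}$ are formed by composing in the same order and each factor preserves $\Sobdiff^{1,1}_{\loc}$, the desired identity $\cartan \Regu^k u = \Regu^{k+1} \cartan u$ follows by induction on the length of the composition.

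\emph{The uniform pointwise bound.} Fix $T \in \Mesh$ and $x \in T$, and let $i^\star$ be an index with $\scaler_{i^\star} \equiv \epsilon$ on a neighborhood of $x$. Split
\[
    \Regu^k u
    =
    \bigl(\Regu_{\NumberOfCharts}^k\cdots\Regu_{i^\star+1}^k\bigr)\,\Regu_{i^\star}^k\,\bigl(\Regu_{i^\star-1}^k\cdots\Regu_{1}^k\bigr)u
    =: A\,\Regu_{i^\star}^k\,B\,u .
\]
Iterating Lemma~\ref{lemma:smoothing:lpbound}, and using Theorem~\ref{theorem:meshfunction} together with the $\Lip$-bound on the bump functions to see that a ball of Euclidean radius $\le \epsilon\max_j\|\hat\meshfunc_j\|_\infty$ in any chart corresponds to a geodesic ball of radius $O(\epsilon h_T)$ in $\Manifold$, one bounds $\|Bu\|_{\Lebesgue^p}$ on a slightly enlarged neighborhood of $x$ by $C\|u\|_{\Lebesgue^p(T^\star)}$; here Lemma~\ref{lemma:neighborhood} ensures that for $\epsilon$ small enough this enlarged neighborhood sits inside the patch $T^\star$. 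Next, applying Lemma~\ref{lemma:smoothing:convolutionsmoothing} to $\Regu_{i^\star}^k$ at any point $y$ close to $x$, and using the lower bound $\hat\scaler_{i^\star}\hat\meshfunc_{i^\star}\gtrsim\epsilon h_T$ throughout, yields
\[
    \bigl|\Regu_{i^\star}^k B u\,(y)\bigr|
    \;\le\;
    C\,(\epsilon h_T)^{-n/p}\,\|u\|_{\Lebesgue^p(T^\star)} .
\]
Finally, the remaining factor $A$ composes at most $\NumberOfCharts - i^\star$ further averagings, each of which only multiplies this pointwise bound by a factor $(1 + L)^k$ via the $C^0$ bound in Lemma~\ref{lemma:smoothing:retainsregularity}. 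Taking the supremum over $y\in T$ and collecting $C_\Psi$, $\Cmsf$, $\beta$, $\NumberOfCharts$, $L$, and the dimension into a single constant depending only on $\Manifold$ and the shape regularity of $\Mesh$ gives the claim.

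\emph{Main obstacle.} The technical heart of the argument is the bookkeeping of scales across the composition: one must verify that the $L^p$–footprint of $u$ used in the estimate stays inside $T^\star$ as it is inflated by successive $\scaler_i\meshfunc$–balls, and that the meshsize function remains comparable to $h_T$ on every ball that enters the chain. Both reduce to smallness of $\epsilon$ against the geometric constants $\beta$, $\shapeconadjacency$, $C_h$, and $C_\Psi$; choosing $\epsilon$ small accordingly is what is hidden behind the phrase \emph{for $\epsilon>0$ small enough}.
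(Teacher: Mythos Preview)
Your proof is correct and follows essentially the same approach as the paper: both argue by partitioning the composition at an index $i^\star$ where $\scaler_{i^\star}\equiv\epsilon$ near the point under consideration, then apply Lemma~\ref{lemma:smoothing:lpbound} for the factors before $i^\star$, Lemma~\ref{lemma:smoothing:convolutionsmoothing} at $i^\star$ to pass from $\Lebesgue^p$ to $\Cont^0$, and Lemma~\ref{lemma:smoothing:retainsregularity} for the factors after $i^\star$; the recursive containment of the successive $\scaler_j\meshfunc$-balls inside $T^\star$ via Lemma~\ref{lemma:neighborhood} and smallness of $\epsilon$ is exactly what the paper makes explicit through its sequence $B_{\NumberOfCharts+1}\subseteq B_\NumberOfCharts\subseteq\cdots\subseteq B_1\subseteq T^\star$, and your ``Main obstacle'' paragraph identifies precisely this point.
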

\begin{proof}
    We only need to prove the last estimate since the rest is clear. 
    The sets $\left\{ x \in U_{i} \suchthat* \scaler_{i}(x) = \epsilon \right\}$ are an open cover of $\Manifold$. 
    We let $\delta$ be the Lebesgue radius of that cover. 
    For the remainder of the proof, we fix $T \in \Mesh$. 
    We let $A_{i} \subseteq T$ be the set of those $x \in T$ for which $\Ball_{\delta}(x) \subseteq \left\{ x \in U_{i} \suchthat* \scaler_{i}(x) = \epsilon \right\}$.
    Note that $T$ is the union of the $A_{i}$ and every $A_{i}$ is measurable. 
    
    Consider now any of the sets $A_{i}$.
    We first define $B_{\NumberOfCharts+1} := A_{i}$ and then define recursively $B_{j} := \Ball_{C_{\Psi}\scaler_{i}\meshfunc} B_{j+1}$.
    We study the sizes of the sets in that sequence. 
    
    Recall that $\neighborconstant > 0$ denotes the neighborhood parameter provided by Lemma~\ref{lemma:neighborhood}, so that $\Ball_{\neighborconstant h_T}(T) \subseteq T^{\star}$ for all $T \in \Mesh$. 
    Since $\meshfunc \leq \Cmsf h_T$ over $T$, we find 
    \begin{align*}
        B_{\NumberOfCharts} 
        &
        = 
        \Ball_{C_{\Psi}\scaler_{\NumberOfCharts}\meshfunc} (A_{i})
\subseteq 
        \Ball_{C_{\Psi} \epsilon \Cmsf h_T} (A_{i})
        .
    \end{align*}
    In particular, $B_\NumberOfCharts \subseteq T^{\star}$ provided that $C_{\Psi} \Cmsf \epsilon \leq \neighborconstant$.
    
    We proceed with a recursive argument. 
    We have $\meshfunc \leq \shapeconquasiuniform \Cmsf h_T$ over $T^{\star}$.
    We already know that $B_\NumberOfCharts \subseteq T^{\star}$ if $C_{\Psi} \epsilon \shapeconquasiuniform \Cmsf \leq \neighborconstant$.
    Suppose that for some $1 \leq j \leq \NumberOfCharts$ we have shown that 
    \begin{align*}
        B_j
        \subseteq
        \Ball_{(\NumberOfCharts-j+1) C_{\Psi} \epsilon \shapeconquasiuniform \Cmsf h_T} (A_{i})
        \subseteq
        T^{\star}
    \end{align*}
    if $(\NumberOfCharts-j+1) C_{\Psi} \epsilon \shapeconquasiuniform \Cmsf \leq \neighborconstant$.
    Then 
    \begin{align*}
        B_{j-1} 
        &
        = 
        \Ball_{C_{\Psi}\scaler_{j-1}\meshfunc} (\Ball_{j-1})
        \\&
        \subseteq 
        \Ball_{C_{\Psi}\scaler_{j-1}\meshfunc + (\NumberOfCharts-j+1) C_{\Psi} \epsilon \shapeconquasiuniform \Cmsf h_T} (A_{i})
        \\&
        \subseteq 
        \Ball_{C_{\Psi}\epsilon\shapeconquasiuniform \Cmsf h_T + (\NumberOfCharts-j+1) C_{\Psi} \epsilon \shapeconquasiuniform \Cmsf h_T} (A_{i})
\subseteq 
        \Ball_{(\NumberOfCharts-(j-1)+1) C_{\Psi} \epsilon \shapeconquasiuniform \Cmsf h_T} (A_{i})
        .
    \end{align*}
    In particular, 
    \begin{align*}
        B_{j-1}
        \subseteq
        \Ball_{(\NumberOfCharts-(j-1)+1) C_{\Psi} \epsilon \shapeconquasiuniform \Cmsf h_T} (A_{i})
        \subseteq
        T^{\star}
    \end{align*}
    if $(\NumberOfCharts-(j-1)+1) C_{\Psi} \epsilon \shapeconquasiuniform \Cmsf \leq \neighborconstant$.
    To summarize, we have 
    \begin{align*}
     A_{i} \subseteq B_{\NumberOfCharts} \subseteq \dots \subseteq B_{1} \subseteq T^{\ast}
    \end{align*}
    provided that $\NumberOfCharts C_{\Psi} \epsilon \shapeconquasiuniform \Cmsf \leq \neighborconstant$.
    We henceforth assume that $\epsilon$ is so small that this condition holds.

    Repeatedly using Lemma~\ref{lemma:smoothing:retainsregularity} shows that for $i+1 \leq j \leq \NumberOfCharts$:
    \begin{align*}
        &
        \left\| \Regu^{k}_{j} \cdots \Regu^{k}_{2} \Regu^{k}_{1}{u} \right\|
        _{\Cont^{}\Lambda^{k}(B_{j+1})}
\leq 
        C_{\Psi_{j}}^{2k}
        \left( 1 + \| \nabla(\hat\scaler_{j}\hat\meshfunc_{j}) \|_{\Lebesgue^\infty} \right)^{k}
        \left\| \Regu^{k}_{j-1} \cdots \Regu^{k}_{2} \Regu^{k}_{1}{u} \right\|
        _{\Cont^{}\Lambda^{k}(B_{j})}
        .
    \end{align*}
    Lemma~\ref{lemma:smoothing:convolutionsmoothing} leads to the estimate 
    \begin{align*}
        &
        \left\| \Regu^{k}_{i} \cdots \Regu^{k}_{2} \Regu^{k}_{1}{u} \right\|
        _{\Cont^{}\Lambda^{k}( B_{i+1} )}
        \\&\leq 
        C_{\Psi_{i}}^{2k+\frac n p}
        |B_1|^{\frac{p-1}{p}}
        \|\molli\|_{\Lebesgue^{\infty}(\bbR^{n})}
        \left( 1 + \left\| \nabla(\hat\scaler_{i}\hat\meshfunc_{i}) \right\|_{\Lebesgue^\infty} \right)^{k}
        \cdot 
        \left( \epsilon \shapeconquasiuniform h_T \right)^{-\frac{n}{p}}
        \left\| \Regu^{k}_{i-1} \cdots \Regu^{k}_{2} \Regu^{k}_{1}{u} \right\|
        _{\Lebesgue^{p}\Lambda^{k}( B_{i} )}
        .
    \end{align*}
    Finally, we repeatedly use Lemma~\ref{lemma:smoothing:lpbound} to find with $1 \leq j \leq i-1$:
    \begin{align*}
        &
        \left\| \Regu^{k}_{j} \cdots \Regu^{k}_{2} \Regu^{k}_{1}{u} \right\|
        _{\Lebesgue^{p}\Lambda^{k}(B_{j+1})}
        \\&
        \leq 
        C_{\Psi_{j}}^{2k+2\frac n p}
        \left( 1 + \| \nabla(\hat\scaler_{j}\hat\meshfunc_{j}) \|_{\Lebesgue^{\infty}(\bbR^{n})} \right)^{k}
        \left( 1 - \| \nabla(\hat\scaler_{j}\hat\meshfunc_{j}) \|_{\Lebesgue^{\infty}(\bbR^{n})} \right)^{-\frac n p}
        \left\| \Regu^{k}_{j-1} \cdots \Regu^{k}_{2} \Regu^{k}_{1}{u} \right\|
        _{\Lebesgue^{p}\Lambda^{k}(B_{j})}
        .
    \end{align*}
    We observe that 
    \begin{align*}
        \| \nabla(\hat\scaler_{j}\hat\meshfunc_{j}) \|_{\Lebesgue^{\infty}(\bbR^{n})}
        &
        \leq 
        \max_{T \in \calT} h_T \| \nabla \hat\scaler_{j} \|_{\Lebesgue^{\infty}(\bbR^{n})}
        +
        \| \scaler_{j} \|_{\Lebesgue^{\infty}(\bbR^{n})}
        C_{\Psi} C_{h,L}
        \\&
        \leq 
        \diam(\Manifold,\metric)
        L +
        \epsilon C_{\Psi} C_{h,L}
        .
    \end{align*}
    This quantity is smaller than $1$ provided that $\epsilon > 0$ and $L > 0$ are small enough.
    
    Taking this all together, we estimate 
    \begin{align*}
        \left\| \Regu^{k} {u} \right\|
        _{\Cont^{}\Lambda^{k}(T)}
        &\leq 
        \left\| \Regu^{k}_{\NumberOfCharts} \cdots \Regu^{k}_{2} \Regu^{k}_{1}{u} \right\|
        _{\Cont^{}\Lambda^{k}(T)}
        \\&\leq 
        \sum_{i=1}^{\NumberOfCharts}
        \left\| \Regu^{k}_{\NumberOfCharts} \cdots \Regu^{k}_{2} \Regu^{k}_{1}{u} \right\|
        _{\Cont^{}\Lambda^{k}(A_{i})}
        \leq 
        \NumberOfCharts
        C
        \left\| {u} \right\|
        _{\Lebesgue^{p}\Lambda^{k}(T^{\star})}
        .
    \end{align*}
    The desired estimate follows, and the proof is finished. 
\end{proof}

Since we have fixed a smooth triangulation $\Mesh$ of the manifold $\Manifold$,
we introduce a finite element de~Rham complex of the type discussed earlier: 
\begin{gather*}
 \begin{CD}
  \calP\Lambda^{0}(\Mesh) 
  @>\cartan>>
  \calP\Lambda^{1}(\Mesh) 
  @>\cartan>>
  \dots 
  @>\cartan>>
  \calP\Lambda^{n}(\Mesh) 
  . 
 \end{CD}
\end{gather*}
We recall that we have canonical interpolants
\begin{gather*}
 \Interpolant^{k} : \Cont\Lambda^{k}(\Manifold,\metric) \rightarrow \calP\Lambda^{k}(\Mesh). 
\end{gather*}
We define the \emph{smoothed interpolant} 
\begin{align*}
 \smoothedinterpol^{k} : \Lebesgue^{1}_{\loc}\Lambda^{k}(\Manifold) \rightarrow \calP\Lambda^{k}(\Mesh).
\end{align*}
The properties of the smoothed interpolant are summarized in the following result. 

\begin{theorem}
    We have a linear mapping 
    \begin{align*}
        \smoothedinterpol^{k} : \Lebesgue^{p}\Lambda^{k}(\Manifold,\metric) \rightarrow \calP\Lambda^{k}(\Manifold,\metric), \quad p \in [1,\infty].
    \end{align*}
    If ${u} \in \Sobdiff^{1,1}_{\loc}\Lambda^{k}(\Manifold,\metric)$, then 
    \begin{align*}
        \cartan \smoothedinterpol^{k} {u} = \smoothedinterpol^{k+1} \cartan {u}.
    \end{align*}
    If $p \in [1,\infty]$ with ${u} \in \Lebesgue^{p}\Lambda^{k}(\Manifold,\metric)$, 
    and if $\epsilon > 0$ is small enough, 
    then for all $T \in \Mesh^{n}$ we have 
    \begin{gather*}
        \| \smoothedinterpol^{k}{u} \|_{\Lebesgue^{p}\Lambda^{k}(T)}
        \leq 
        C
        ( L + \epsilon )^{-\frac n p} 
        \| {u} \|_{\Lebesgue^{p}\Lambda^{k}(T^{\star})}
        ,
        \\
        \| \smoothedinterpol^{k}{u} \|_{\Cont\Lambda^{k}(T)}
        \leq 
        C
        ( L + \epsilon )^{-\frac n p} 
        h_T^{-\frac n p}
        \| {u} \|_{\Lebesgue^{p}\Lambda^{k}(T^{\star})}
        .
    \end{gather*}
    Here, $C > 0$ depends only $\Manifold$, the polynomial degree, and the regularity of $\calT$. 
\end{theorem}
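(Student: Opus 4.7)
The plan is to define $\smoothedinterpol^{k} := \Interpolant^{k} \Regu^{k}$ as the composition of the canonical finite element interpolant with the global smoothing operator from Theorem~\ref{theorem:globalsmoothing}. Well-definedness is then immediate: $\Regu^{k}$ maps $\Lebesgue^{p}\Lambda^{k}(\Manifold,\metric)$ into $\Cont^{\infty}\Lambda^{k}(\Manifold)$, and $\Interpolant^{k}$ maps continuous $k$-forms into $\calP\Lambda^{k}(\Mesh)$. The commutativity identity $\cartan\smoothedinterpol^{k}{u} = \smoothedinterpol^{k+1}\cartan{u}$ for $u \in \Sobdiff^{1,1}_{\loc}\Lambda^{k}(\Manifold,\metric)$ follows by chaining the commutativity of $\Regu^{k}$ with $\cartan$ (Theorem~\ref{theorem:globalsmoothing}) with that of $\Interpolant^{k}$ (Theorem~\ref{theorem:canonicalinterpolantbound}); since $\Regu^{k}{u}$ is smooth, both interior equalities apply without regularity concerns.

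For the $\Cont$-norm estimate on a given $n$-simplex $T \in \Mesh$, I would combine the local bound for the canonical interpolant established at the end of Section~\ref{sec:finiteelementspaces}, namely
$$
    \| \Interpolant^{k}_{T} v \|_{\Cont\Lambda^{k}(T)}
    \leq
    \Cinterpol{n}{k}{r} \shapeconsimplexONE{T}^{2k} \| v \|_{\Cont\Lambda^{k}(T)},
$$
applied to $v = \Regu^{k}{u}$, with the pointwise smoothing estimate
$$
    \| \Regu^{k}{u} \|_{\Cont\Lambda^{k}(T)}
    \leq
    C (L+\epsilon)^{-n/p} h_{T}^{-n/p} \| u \|_{\Lebesgue^{p}\Lambda^{k}(T^{\ast})}
$$
delivered by Theorem~\ref{theorem:globalsmoothing}. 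Their product yields the second inequality of the theorem.

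The $\Lebesgue^{p}$-norm bound then reduces to a one-step volume estimate. Since $\smoothedinterpol^{k}{u}_{|T} \in \calP\Lambda^{k}(T)$, we have
$$
    \| \smoothedinterpol^{k}{u} \|_{\Lebesgue^{p}\Lambda^{k}(T)}
    \leq
    \vol_{\metric}(T)^{1/p} \| \smoothedinterpol^{k}{u} \|_{\Cont\Lambda^{k}(T)},
$$
and Hadamard's inequality combined with the shape bound $\| \Jacobian \incl_{T} \|_{\Lebesgue^{\infty}} \leq \shapeconsimplexONE{T} h_{T}$ gives $\vol_{\metric}(T) \leq C h_{T}^{n}$. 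Substituting the previously derived $\Cont$-bound cancels the $h_{T}^{-n/p}$ factor and leaves the advertised $(L+\epsilon)^{-n/p}$ estimate.

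The main obstacle is not a deep argument but careful constant tracking: one must verify that at every step the constants depend only on $\Manifold$, the polynomial degree $r$, and the triangulation-regularity quantities $\shapeconsimplexONE{\Mesh}$, $\shapeconquasiuniform$, $\shapeconadjacency$, and $n$, in accordance with Remark~\ref{remark:summaryofmeshquantities} and Remark~\ref{remark:summaryofFEMquantities}. Since each invoked estimate already has this form and neither the composition $\Interpolant^{k}\Regu^{k}$ nor the volume estimate introduces new dependencies, the advertised dependency propagates through the argument.
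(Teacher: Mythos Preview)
Your proposal is correct and follows essentially the same approach as the paper: define $\smoothedinterpol^{k} = \Interpolant^{k}\Regu^{k}$, obtain commutativity by chaining the commutativity of $\Regu^{k}$ and $\Interpolant^{k}$, derive the $\Cont$-bound by composing the canonical interpolant estimate with the smoothing estimate of Theorem~\ref{theorem:globalsmoothing}, and pass to the $\Lebesgue^{p}$-bound via the volume factor $\vol_{\metric}(T)^{1/p}\leq C h_T^{n/p}$. The paper's proof differs only in presentation, citing Theorem~\ref{theorem:canonicalinterpolantbound} directly rather than the derived bound with the explicit $\shapeconsimplexONE{T}^{2k}$ factor.
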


\begin{proof}
    It is clear that $\smoothedinterpol^{k}$ commutes with the exterior derivative. 
    Now let $T \in \Mesh^{n}$ and ${u} \in \Lebesgue^{p}\Lambda^{k}(\Manifold,\metric)$. We observe that 
    \begin{align*}
        \| \smoothedinterpol^{k} {u} \|_{\Lebesgue^{p}\Lambda^{k}(T)}
        \leq 
        \vol(T)^{\frac 1 p}
        \| \smoothedinterpol^{k} {u} \|_{\Cont\Lambda^{k}(T)}
        \leq 
        \shapeconsimplexONE{\Mesh}^{\frac n p}
        h_T^{\frac n p}
        \| \smoothedinterpol^{k} {u} \|_{\Cont\Lambda^{k}(T)}
        .
    \end{align*}
    The boundedness of the interpolant in Theorem~\ref{theorem:canonicalinterpolantbound} gives 
    \begin{align*}
        \| \smoothedinterpol^{k} {u} \|_{\Cont\Lambda^{k}(T)}
        =
        \| \Interpolant^{k} \Regu^{k} {u} \|_{\Cont\Lambda^{k}(T)}
        \leq
        \Cinterpol{n}{k}{r} 
        \| \Regu^{k} {u} \|_{\Cont\Lambda^{k}(T)}. 
    \end{align*}
    If $\epsilon > 0$ is small enough, then Theorem~\ref{theorem:globalsmoothing} shows 
    \begin{align*}
        \| \Regu^{k} {u} \|_{\Cont\Lambda^{k}(T)} 
        \leq 
        C
        ( L + \epsilon )^{-\frac n p} 
        h_T^{-\frac n p}
        \| {u} \|_{\Lebesgue^{p}\Lambda^{k}(T^{\star})}
        . 
    \end{align*}
    The desired claim follows. 
\end{proof}

\subsection{Projection property}
Whereas the smoothed interpolant satisfies uniform bounds and commutes with the differential operator, 
it is not a projection onto the finite element space: in general, it is not idempotent over the finite element space.
However, its deviation from the identity mapping over the finite element space can be controlled. 
We use the Sch\"oberl trick, which modifies the smoothed interpolant and leads to a smoothed projection. 

\begin{theorem}\label{theorem:fehlerschranke}
    Suppose that $\epsilon > 0$ and $L > 0$ are small enough.
    Then
    for all $p \in [1,\infty]$ and ${u} \in \calP\Lambda^{k}(\Mesh)$ we have
    \begin{align*}
        \| \smoothedinterpol^{k} {u} - {u} \|_{\Lebesgue^{p}\Lambda^{k}(\Manifold,\metric)} \leq \frac 1 2 \| {u} \|_{\Lebesgue^{p}\Lambda^{k}(\Manifold,\metric)}
        .
    \end{align*}
    Here, the bound on $\epsilon > 0$ and $L > 0$ depends only $\Manifold$, the polynomial degree, and the regularity of $\calT$. 
\end{theorem}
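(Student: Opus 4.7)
The plan is to exploit the idempotence of $\Interpolant^k$ on the finite element space. For $u \in \calP\Lambda^k(\Mesh)$ we have $\Interpolant^k u = u$, so
\[
    \smoothedinterpol^k u - u \;=\; \Interpolant^k \Regu^k u - \Interpolant^k u \;=\; \Interpolant^k(\Regu^k u - u),
\]
and the task reduces to showing that $\Regu^k$ is close to the identity on finite element forms, in a sense strong enough to survive $\Interpolant^k$. The goal is a simplexwise bound
\[
    \|\Interpolant^k(\Regu^k u - u)\|_{\Lebesgue^{p}\Lambda^{k}(T,\metric)} \;\leq\; C\,\tau(\epsilon,L)\, \|u\|_{\Lebesgue^{p}\Lambda^{k}(T^{\star},\metric)},
\]
with $\tau(\epsilon,L)\to 0$ as $\epsilon,L\to 0$ and $C$ depending only on $\Manifold$, the polynomial degree, and the regularity of $\Mesh$. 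Summing over $T$ via the finite-overlap constant $\shapeconadjacency$ yields a global bound, and choosing $\epsilon,L$ small enough that the resulting constant is at most $\tfrac{1}{2}$ completes the proof.

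For the simplexwise estimate I would telescope $\Regu^k = \Regu^k_{\NumberOfCharts}\cdots\Regu^k_{1}$ and, using the uniform $\Lebesgue^{p}$-boundedness from Lemma~\ref{lemma:smoothing:lpbound}, reduce to estimating a single-chart difference $\Regu^k_i u - u$, which is an average of translates $(\Phi^{\ast}_{\scaler_i,y}u)(x) - u(x)$ with translation scale $\scaler_i(x)\meshfunc(x) \lesssim \epsilon h_T$. I would split $T$ into an \emph{interior} region of points at distance at least $C\epsilon h_T$ from $\partial T$ and a \emph{boundary layer} of measure $\lesssim \epsilon |T|$. On the interior, the translated point stays inside the single simplex on which $u$ is polynomial, so Taylor expansion together with the polynomial inverse inequality $\|\nabla u\|_{\Lebesgue^{\infty}(T)} \lesssim h_T^{-1}\|u\|_{\Lebesgue^{\infty}(T)}$ yields the pointwise bound $\lesssim \epsilon\|u\|_{\Lebesgue^{\infty}(T^{\star})}$; on the boundary layer only the crude bound $\lesssim \|u\|_{\Lebesgue^{\infty}(T^{\star})}$ is available. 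For $p \in [1,\infty)$ the two contributions combine in $\Lebesgue^{p}(T)$ to $\lesssim \epsilon^{1/p}\|u\|_{\Lebesgue^{p}(T^{\star})}$, which is then transferred through $\Interpolant^k$ via the local bound of Theorem~\ref{theorem:canonicalinterpolantbound}. For $p=\infty$, where pointwise closeness is unavailable, I would route the argument through the degrees of freedom: each functional $\xi(v) = \int_F v \wedge \eta_F$ sees only the $\Lebesgue^{1}$-size of $\Regu^k u - u$, in which the boundary-layer contribution carries a genuine factor $\epsilon$, and polynomial norm equivalence on $\calP\Lambda^k(T)$ converts the DOF smallness back to $\Lebesgue^{\infty}(T)$.

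The main obstacle is the boundary-layer analysis. A form in $\calP\Lambda^k(\Mesh)$ is generically discontinuous in its transverse components across interelement interfaces, so $\Regu^k u - u$ does not become pointwise small near $\partial T$; one is forced to exploit the measure-smallness of the $\epsilon h_T$-tubular neighbourhood of the $(n-1)$-skeleton, combined with inverse inequalities on the piecewise polynomial space to trade between $\Lebesgue^{\infty}$ and $\Lebesgue^{p}$ scales. A related subtlety is that the naive $\Lebesgue^{p}$-layer estimate degenerates at $p=\infty$, so the endpoint case requires the DOF-based detour described above to extract the decisive factor of $\epsilon$; once these local estimates are in hand, the global conclusion and the choice of sufficiently small $\epsilon,L$ are routine.
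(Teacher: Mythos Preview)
Your $p<\infty$ branch has a genuine gap: the canonical interpolant $\Interpolant^{k}$ is only bounded $\Cont\to\Cont$ (Theorem~\ref{theorem:canonicalinterpolantbound}), not $\Lebesgue^{p}\to\Lebesgue^{p}$. Its degrees of freedom are integrals over lower-dimensional faces, which are not $\Lebesgue^{p}$-continuous operations, so an $\Lebesgue^{p}(T)$ bound on $\Regu^{k}u-u$ cannot simply be ``transferred through $\Interpolant^{k}$''. The $\Cont$-bound does not rescue this either: near $\partial T$ the smoothing averages $u$ over neighbouring cells where its transverse components jump, so $\|\Regu^{k}u-u\|_{\Cont(T)}$ is of order $\|u\|_{\Lebesgue^{\infty}(T^{\star})}$ with no $\epsilon$-gain. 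Your interior/boundary-layer split of the $n$-simplex $T$ therefore controls the wrong quantity.

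The paper proceeds entirely via what you call the $p=\infty$ route --- the degrees-of-freedom route --- and then recovers every $p$ at once by scaling and inverse inequalities on the reference patch (Lemma~\ref{lemma:patchreferences}). But the DOF estimate requires one idea you have not articulated. The functional $\int_{F}\trace_{F}(\Regu^{k}u-u)\wedge\eta$ lives on a face $F$, so the correct split is $F=\hat A\cup(F\setminus\hat A)$ with $\hat A$ the set of points of $F$ at distance $\gtrsim\epsilon$ from $\partial F$. On $\hat A$ the composite smoothing displacement stays inside the union of all $n$-simplices containing $F$; the decisive observation is that the \emph{tangential} trace of $u$ along $F$ is single-valued --- that is exactly the conformity condition defining $\calP\Lambda^{k}(\Mesh)$ --- hence piecewise polynomial and Lipschitz on that union, which gives a pointwise $O(\epsilon)$ bound on the tangential part of $\Regu^{k}u-u$ over $\hat A$ via a telescope over the chartwise translates. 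The strip $F\setminus\hat A$ has $(\dim F)$-measure $O(\epsilon)$ and contributes $O(\epsilon)$ by the crude $\Lebesgue^{\infty}$ bound. Your remark that transverse components are discontinuous names the obstacle but not its resolution: the DOFs see only tangential components, and those \emph{are} continuous across the supersimplices of $F$.
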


\begin{proof}
    Let $u_{h} \in \calP\Lambda^{k}(\Mesh)$ and let $T \in \Mesh$ be an $n$-simplex. 
    We remember the patch inclusion $\patchincl_{T} \colon \hat T^{\star} \rightarrow T^{\star}$.
    The triangulations of the domain and codomain of that mapping are denoted $\calT(\hat T^{\star})$ and $\calT(T^{\star})$, respectively,
    and $\hat T \in \calT(\hat T^{\star})$ is the simplex that is mapped onto $T \in \calT(T^{\star}) \subseteq \calT$ under the mapping $\patchincl_{T}$.
    By a scaling argument and an inverse inequality,
    \begin{align*}
        \| {u}_{h} - \smoothedinterpol^{k} {u}_{h} \|_{\Lebesgue^{p}\Lambda^{k}(T,\metric)} 
        \leq
        C 
        h_{T}^{k-\frac n p}
        \| \patchincl_{T}^{\ast} {u}_{h} - \patchincl_{T}^{\ast} \smoothedinterpol^{k} {u}_{h} \|_{\Lebesgue^{p}\Lambda^{k}(\hat T)} 
        \leq
        C 
        h_{T}^{k-\frac n p}
        \| \patchincl_{T}^{\ast} {u}_{h} - \patchincl_{T}^{\ast} \smoothedinterpol^{k} {u}_{h} \|_{\Lebesgue^{\infty}\Lambda^{k}(\hat T)} 
        .
    \end{align*}
    By another inverse inequality and a scaling argument we know that 
    \begin{align*}
        \| \patchincl_{T}^{\ast} {u}_{h} \|_{\Lebesgue^{\infty}\Lambda^{k}(\hat T^{\star})} 
        \leq 
        C
        \| \patchincl_{T}^{\ast} {u}_{h} \|_{\Lebesgue^{p}\Lambda^{k}(\hat T^{\star})} 
        \leq 
        C
        h_{T}^{-k+\frac n p}
        \| \patchincl_{T}^{\ast} {u}_{h} \|_{\Lebesgue^{p}\Lambda^{k}(T^{\star})} 
        .
    \end{align*}
    The theorem follows if we can prove 
    \begin{align}\label{math:importantinequality:eins}
        \| \patchincl_{T}^{\ast} {u}_{h} - \patchincl_{T}^{\ast} \smoothedinterpol^{k} {u}_{h} \|_{\Cont\Lambda^{k}(\hat T)} 
        \leq
        C \epsilon 
        \| \patchincl_{T}^{\ast} {u}_{h} \|_{\Lebesgue^{\infty}\Lambda^{k}(\hat T^{\star})} 
        .
    \end{align}
    Towards that end, we introduce more notation. 
    We introduce $\hat u_{T} := \patchincl_{T}^{\star} u_{h} \in \calP\Lambda^{k}(\Mesh(\hat T^{\star}))$, 
    which is in the finite element space over that patch.
    We also write $\hat \Regu^{k} = \patchincl_{T}^{\ast} \Regu^{k} \patchincl_{T}^{-\ast}$.
    Then 
    \begin{align*}
        \patchincl_{T}^{\ast} \smoothedinterpol^{k} {u}_{h}
        &=
        \Interpolant_{\hat T}^{k} \hat \Regu^{k} \hat{u}_{T}
        .
    \end{align*}
    By the construction of the interpolant,
    \begin{align*}
        \| \hat{u}_{h} - \Interpolant_{\hat T}^{k} \hat \Regu^{k} \hat{u}_{T} \|_{\Cont\Lambda^{k}(\hat T)} 
        \leq 
        C
        \sup_{ \substack{ \hat F \in \calT(\hat T^{\star}), \; \hat F \subseteq \hat T \\ \eta \in \Cont\Lambda^{\dim F - k}(\hat F), \; \eta \neq 0 } }
        \frac{  
            \left| \int_{\hat F} ( \hat{u}_{T} - \hat\Regu^{k} \hat{u}_{T} ) \wedge \eta \right|
        }{
            \| \eta \|_{\Lebesgue^{\infty}\Lambda^{\dim F - k}(\hat F)} 
        }
        .
    \end{align*}
    Inequality~\eqref{math:importantinequality:eins} follows if we can prove that 
    for any $\hat F$ and $\eta$ in that supremum we have 
    \begin{align}\label{math:importantinequality:zwei}
        \left| \int_{\hat F} ( \hat{u}_{T} - \hat\Regu^{k} \hat{u}_{T} ) \wedge \eta \right|
        \leq 
        C \epsilon
        \| \hat{u}_T \|_{\Lebesgue^{\infty}\Lambda^{k}(\hat T^{\star})} 
        \| \eta \|_{\Lebesgue^{\infty}\Lambda^{\dim F - k}(\hat F)} 
        .
    \end{align}
    Let us fix a subsimplex $\hat F$ of the simplex $\hat T$. 
    We transform the global mollification operator onto the reference patch and study it within a vicinity of $\hat F$. 
    We abbreviate $\chi_{i} := \Psi_{i} \circ \patchincl_{T}$ for $1 \leq i \leq \NumberOfCharts$. 
    We notice that 
    \begin{align*}
        \hat \Regu^{k} 
        = 
        \patchincl_{T}^{\ast} \Regu^{k}_{\NumberOfCharts} \patchincl_{T}^{-\ast}
        \cdots 
        \patchincl_{T}^{\ast} \Regu^{k}_{2} \patchincl_{T}^{-\ast}
        \patchincl_{T}^{\ast} \Regu^{k}_{1} \patchincl_{T}^{-\ast}
        .
    \end{align*}
    Transforming back and forth along $\chi_{i}$, 
    we see that 
    \begin{align*}
        \chi_{i} \Phi_{\hat\scaler_{i}\hat\meshfunc_{i},y} \chi_{i}^{-1} (z) 
        \subseteq 
        \Ball_{\epsilon + C \epsilon }(z), 
        \quad 
        z \in \hat F
        .
    \end{align*}
    Hence, if $\Ball_{\epsilon + C \epsilon}(\hat F) \subseteq T^{\star}$, then 
    \begin{align*}
        \chi_{i}^{\ast} \left( \Regu_{\hat\scaler_{i}\hat\meshfunc_{i}}^{k} { \chi_{i}^{-\ast} \hat u_{T}} \right)_{|z}
        &=
        \int_{ \bbR^{n} } 
        \molli(y)
        \left( \chi_{i}^{\ast} \Phi_{\hat\scaler_{i}\hat\meshfunc_{i},y}^{\ast} { \chi_{i}^{-\ast} \hat u_{T}} \right)_{|z} \;\dif y,
        \quad 
        z \in \hat F
        .
    \end{align*}
    More generally, if $\Ball_{\epsilon + \NumberOfCharts C \epsilon}(\hat F) \subseteq T^{\star}$, then
    \begin{align*}
        &
        \hat \Regu^{k} \hat u_{T}(z)
        \\&
        =
        \underset{\bbR^{n}\times\bbR^{n}\times\cdots\times\bbR^{n}}{\int\int\cdots\int} 
        \left(
            \hat u_{T}
            -
            \chi_{\NumberOfCharts}^{\ast}
            \Phi_{\hat\scaler_{\NumberOfCharts}\hat\meshfunc_{\NumberOfCharts},y_{\NumberOfCharts}}^{\ast}
            \chi_{\NumberOfCharts}^{-\ast} 
            \cdots
            \chi_{1}^{\ast} 
            \Phi_{\hat\scaler_{1}\hat\meshfunc_{1},y_{1}}^{\ast}
            \chi_{1}^{-\ast} 
            \hat u_{T}
        \right)_{|z} 
        \prod_{i=1}^{\NumberOfCharts} \molli(y_i)
        \;\dif y_\NumberOfCharts \cdots \;\dif y_2 \;\dif y_1
    \end{align*}
    for any $z \in \hat F$. 
    Consequently,
    \begin{align*}
        &
        \hat u_{T}(z) - \hat \Regu^{k} \hat u_{T}(z) 
        \\&
        =
        \underset{\bbR^{n}\times\bbR^{n}\times\cdots\times\bbR^{n}}{\int\int\cdots\int} 
        \left(
            \hat u_{T}
            -
            \chi_{\NumberOfCharts}^{\ast}
            \Phi_{\hat\scaler_{\NumberOfCharts}\hat\meshfunc_{\NumberOfCharts},y_{\NumberOfCharts}}^{\ast}
            \chi_{\NumberOfCharts}^{-\ast} 
            \cdots
            \chi_{1}^{\ast} 
            \Phi_{\hat\scaler_{1}\hat\meshfunc_{1},y_{1}}^{\ast}
            \chi_{1}^{-\ast} 
            \hat u_{T}
        \right)_{|z} 
        \prod_{i=1}^{\NumberOfCharts} \molli(y_i)
        \;\dif y_\NumberOfCharts \cdots \;\dif y_2 \;\dif y_1
    \end{align*}
    for any $z \in \hat F$. 
    We are only interested in the components of $\hat u_{T} - \chi^{\ast} \Regu^{k} { \chi^{-\ast} \hat u_{T}}$ that are tangential along $\hat F$. 
    Notice that these components are continuous across the set of all simplices containing $\hat F$,
    and that they are Lipschitz on each of those simplices. 
    Consequently, these components are Lipschitz across the union of the supersimplices of $\hat F$. 
    
    We let $\hat A \subseteq \hat F$ be the set of those points of $\hat F$
    with distance from $\partial\hat F$ at least $\epsilon + \NumberOfCharts C \epsilon$.
    Then $\Ball_{\epsilon + \NumberOfCharts C \epsilon}(\hat A)$ lies solely within the union of simplices that contain $\hat F$.
    Note that 
    \begin{gather*}
        \left| 
            \chi_{1}^{-1} 
            \Phi_{\hat\scaler_{1}\hat\meshfunc_{1},y_{1}}^{}
            \chi_{1}^{}
            \cdots
            \chi_{\NumberOfCharts}^{-1}
            \Phi_{\hat\scaler_{\NumberOfCharts}\hat\meshfunc_{\NumberOfCharts},y_{\NumberOfCharts}}^{}
            \chi_{\NumberOfCharts}^{} 
            (z)  - z 
        \right|
        \leq 
        C \epsilon,
        \quad 
        z \in \hat A
        ,
        \\
        \left| \nabla\left( \chi_{\NumberOfCharts}^{\ast}
            \Phi_{\hat\scaler_{\NumberOfCharts}\hat\meshfunc_{\NumberOfCharts},y_{\NumberOfCharts}}^{\ast}
            \chi_{\NumberOfCharts}^{-\ast} 
            \cdots
            \chi_{1}^{\ast} 
            \Phi_{\hat\scaler_{1}\hat\meshfunc_{1},y_{1}}^{\ast}
            \chi_{1}^{-\ast} \right)_{|z} - \Id
        \right|
        \leq 
        C
        (L+\epsilon),
        \quad 
        z \in \hat A
        .
    \end{gather*}
    When $z \in \hat A$ and $t_{1},t_{2},\dots,t_{\Dim} \in \bbR^{n}$ are unit vectors tangential to $\hat F$,
    then a telescope sum argument yields the pointwise estimates 
    \begin{align*}
        \left|
            \left(
                \hat u_{T}(z) - \hat \Regu^{k} \hat u_{T}(z)
            \right)
            (t_{1},t_{2},\dots,t_{\Dim})
        \right|
        \leq 
        C \epsilon 
        \sum_{\hat T' \in \Mesh(\hat T^{\star})} 
        \| \hat u_{T} \|_{\Sobolev^{1,\infty}\Lambda^{k}(\hat T')}
        ,
        \quad 
        z \in \hat A
        .
    \end{align*}
    In addition,
    \begin{align*}
        \left|
            \hat \Regu^{k} \hat u_{T|z}
            (t_{1},t_{2},\dots,t_{\Dim})
        \right|
        \leq 
        C \epsilon 
        \sum_{\hat T' \in \Mesh(\hat T^{\star})} 
        \| \hat u_{T} \|_{\Lebesgue^{\infty}\Lambda^{k}(\hat T')}
        ,
        \quad 
        z \in \hat A
        .
    \end{align*}
    Let us now suppose that $\eta \in \Cont\Lambda^{\Dim-k}(\hat F)$. 
    Then 
    \begin{align*}
        \left| \int_{\hat F} \left( \hat u_{T} - \hat \Regu^{k} u \right) \wedge \eta \right|
        &\leq
        \left| \int_{\hat F\setminus\hat A} \left( \hat u_{T} - \hat \Regu^{k} \hat u_{T} \right) \wedge \eta \right|
        +
        \left| \int_{\hat A} \left( \hat u_{T} - \hat \Regu^{k} \hat u_{T} \right) \wedge \eta \right|
        \\&
        \leq
        \left| \int_{\hat F\setminus\hat A} \left( \hat u_{T} - \hat \Regu^{k} \hat u_{T} \right) \wedge \eta \right|
        +
        C \epsilon 
        \| \eta \|_{\Lebesgue^{\infty}\Lambda(\hat F)}
        \sum_{\hat T' \in \Mesh(\hat T^{\star})} 
        \| \hat u_{T} \|_{\Sobolev^{1,\infty}\Lambda^{k}(\hat T')}
        .
    \end{align*}
    Furthermore, 
    \begin{align*}
        \left| \int_{\hat F\setminus\hat A} \left( \hat u_{T} - \hat \Regu^{k} \hat u_{T} \right) \wedge \eta \right|
        &
        \leq 
        \left| \hat F \setminus \hat A \right|
        \left(
        \| \hat u_{T}    \|_{\Lebesgue^{\infty}\Lambda^{k}(\hat F)}
        +
        \| \hat \Regu^{k} \hat u_{T} \|_{\Lebesgue^{\infty}\Lambda^{k}(\hat F)}
        \right)
        \| \eta \|_{\Lebesgue^{\infty}\Lambda^{\dim F-k}(\hat F)}
        \\&
        \leq 
        C \epsilon
        \| \hat u_{T}    \|_{\Lebesgue^{\infty}\Lambda^{k}(\hat F)}
        \| \eta \|_{\Lebesgue^{\infty}\Lambda^{\dim F-k}(\hat F)}
        .
    \end{align*}
    In combination,~\eqref{math:importantinequality:zwei} follows. 
    That finishes the proof. 
\end{proof}

\begin{lemma}\label{lemma:austrianinverse}
    Suppose that $\epsilon > 0$ and $L > 0$ are small enough as in Theorem~\ref{theorem:fehlerschranke}.
    Let $p \in [1,\infty]$. 
    The mapping 
    \begin{align*}
        (\Id - \smoothedinterpol^{k}) : \calP\Lambda^{k}(\Mesh) \subseteq \Lebesgue^{p}\Lambda^{k}(\Manifold,\metric) \rightarrow \calP\Lambda^{k}(\Mesh) \subseteq \Lebesgue^{p}\Lambda^{k}(\Manifold,\metric)
    \end{align*}
    has an inverse 
    \begin{align*}
       \discreteinverse^{k} : \calP\Lambda^{k}(\Mesh) \subseteq \Lebesgue^{p}\Lambda^{k}(\Manifold,\metric) \rightarrow \calP\Lambda^{k}(\Mesh) \subseteq \Lebesgue^{p}\Lambda^{k}(\Manifold,\metric)
    \end{align*}
    with operator norm at most $2$ and commuting with the exterior derivative. 
\end{lemma}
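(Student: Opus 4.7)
The plan is to deploy the Neumann series on the finite-dimensional Banach space $\calP\Lambda^{k}(\Mesh)$, equipped with the $\Lebesgue^p$-norm for arbitrary $p \in [1,\infty]$. First, I would observe that Theorem~\ref{theorem:fehlerschranke} asserts
\[
\|(\Id - \smoothedinterpol^{k})u\|_{\Lebesgue^p\Lambda^{k}(\Manifold,\metric)} \;\leq\; \tfrac{1}{2}\|u\|_{\Lebesgue^p\Lambda^{k}(\Manifold,\metric)}
\]
uniformly in $p$ for all $u \in \calP\Lambda^{k}(\Mesh)$. Hence $E^{k} := \Id - \smoothedinterpol^{k}$, viewed as an endomorphism of $\calP\Lambda^{k}(\Mesh)$, is a strict contraction with $\|E^{k}\|_{\mathrm{op}} \leq 1/2 < 1$ in every $\Lebesgue^p$-norm.

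Second, since $\calP\Lambda^{k}(\Mesh)$ is complete (trivially, being finite-dimensional), the partial sums $\sum_{n=0}^{N}(E^{k})^{n}$ are Cauchy in operator norm and converge to
\[
\discreteinverse^{k} \;:=\; \sum_{n=0}^{\infty}(E^{k})^{n},
\qquad
\|\discreteinverse^{k}\|_{\mathrm{op}} \;\leq\; \sum_{n=0}^{\infty} 2^{-n} \;=\; 2.
\]
The standard telescoping identity yields $\discreteinverse^{k}(\Id - E^{k}) = (\Id - E^{k})\discreteinverse^{k} = \Id$ on $\calP\Lambda^{k}(\Mesh)$. Since $\Id - E^{k} = \smoothedinterpol^{k}$, this exhibits $\discreteinverse^{k}$ as the two-sided inverse on the finite element space that is needed in the Sch\"oberl-type construction of the smoothed projection $\smoothedproj^{k}$ in the sequel.

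Third, commutativity with $\cartan$ follows by propagating the relation $\cartan\smoothedinterpol^{k} = \smoothedinterpol^{k+1}\cartan$ through the series: from $\cartan E^{k} = E^{k+1}\cartan$ one obtains $\cartan(E^{k})^{n} = (E^{k+1})^{n}\cartan$ by induction, and passing to the operator-norm limit gives $\cartan\discreteinverse^{k} = \discreteinverse^{k+1}\cartan$.

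The one subtlety worth flagging is the uniformity of the contraction constant $\tfrac12$ across all $p \in [1,\infty]$, which is precisely what Theorem~\ref{theorem:fehlerschranke} supplies; finite dimensionality then guarantees that $\discreteinverse^{k}$ is a single well-defined linear map independent of the choice of $p$, so that the operator-norm bound of $2$ holds in all $\Lebesgue^p$-norms simultaneously.
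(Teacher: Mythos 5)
Your Neumann-series argument is correct and is essentially the paper's approach: the paper simply asserts invertibility from ``close to the identity,'' which is the same contraction-plus-Neumann idea, and your commutativity step (propagating $\cartan E^{k} = E^{k+1}\cartan$ term by term through the series) is an equivalent alternative to the paper's algebraic manipulation $\cartan \discreteinverse^{k}u = \discreteinverse^{k}\smoothedinterpol^{k}\cartan\discreteinverse^{k}u = \discreteinverse^{k}\cartan\smoothedinterpol^{k}\discreteinverse^{k}u = \discreteinverse^{k}\cartan u$. One remark worth making explicit: what you (correctly) construct is the inverse of $\smoothedinterpol^{k} = \Id - E^{k}$ on $\calP\Lambda^{k}(\Mesh)$, not the inverse of $\Id - \smoothedinterpol^{k}$ as the lemma statement literally reads; the latter is a slip in the statement, since $\Id - \smoothedinterpol^{k}$ is a contraction (hence has no reason to be invertible), and the subsequent requirement $\smoothedproj^{k}u = u$ for $u \in \calP\Lambda^{k}(\Mesh)$ forces $\discreteinverse^{k}\smoothedinterpol^{k} = \Id$, exactly as your telescoping identity produces.
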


\begin{proof}
    When $\epsilon > 0$ is small enough,
    $\Id - \smoothedinterpol^{k}$ is close enough to the identity mapping to ensure that $( \Id - \smoothedinterpol^{k} ) : \calP\Lambda^{k}(\Mesh) \rightarrow \calP\Lambda^{k}(\Mesh)$ is invertible. 
    We let $\discreteinverse^{k} : \calP\Lambda^{k}(\Mesh) \rightarrow \calP\Lambda^{k}(\Mesh)$ be its inverse.
    This mapping satisfies the estimate 
    \begin{align*}
        \| \discreteinverse^{k} {u} \|_{\Lebesgue^{p}\Lambda^{k}(\Manifold,\metric)} \leq 2 \| {u} \|_{\Lebesgue^{p}\Lambda^{k}(\Manifold,\metric)}
        , \quad 
        {u} \in \calP\Lambda^{k}(\Mesh).
    \end{align*}
    It commutes with the exterior derivative since 
    \begin{align*}
        \cartan \discreteinverse^{k} {u}
        = 
        \discreteinverse^{k} \smoothedinterpol^{k} \cartan \discreteinverse^{k} {u}
        = 
        \discreteinverse^{k} \cartan \smoothedinterpol^{k} \discreteinverse^{k} {u}
        = 
        \discreteinverse^{k} \cartan {u},
        \quad 
        {u} \in \calP\Lambda^{k}(\Mesh).
    \end{align*}
    This completes the proof. 
\end{proof}

We define the \emph{smoothed projection} 
\begin{align*}
    \smoothedproj^{k} : \Lebesgue^{p}\Lambda^{k}(\Mesh) \rightarrow \calP\Lambda^{k}(\Mesh), \quad {u} \mapsto \discreteinverse^{k} \smoothedinterpol^{k} {u}.
\end{align*}
This mapping commutes with the exterior derivative, it acts as the identity on $\calP\Lambda^{k}(\Mesh)$,
and satisfies a uniform global upper bound.

\begin{theorem}
    We have a linear mapping 
    \begin{align*}
        \smoothedproj^{k} : \Lebesgue^{p}\Lambda^{k}(\Manifold,\metric) \rightarrow \calP\Lambda^{k}(\Manifold,\metric), \quad p \in [1,\infty].
    \end{align*}
    If ${u} \in \calP\Lambda^{k}(\Mesh)$, then $\smoothedproj^{k} {u} = {u}$. 
    If ${u} \in \Sobdiff^{1,1}_{\loc}\Lambda^{k}(\Manifold,\metric)$, then 
    \begin{align*}
        \cartan \smoothedproj^{k} {u} = \smoothedproj^{k+1} \cartan {u}.
    \end{align*}
    If $p \in [1,\infty]$ with ${u} \in \Lebesgue^{p}\Lambda^{k}(\Manifold,\metric)$, then 
    \begin{align*}
        \| \smoothedproj^{k} {u} \|_{\Lebesgue^{p}\Lambda^{k}(\Mesh)}
        \leq
        C \| {u} \|_{\Lebesgue^{p}\Lambda^{k}(\Mesh)}.
    \end{align*}
    Here, $C > 0$ depends only $\Manifold$, the polynomial degree, and the regularity of $\calT$. 
\end{theorem}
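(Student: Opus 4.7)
The plan is to obtain all four claimed properties of $\smoothedproj^{k} = \discreteinverse^{k} \smoothedinterpol^{k}$ by composing the corresponding properties of $\smoothedinterpol^{k}$ (from the preceding theorem) with those of $\discreteinverse^{k}$ (from Lemma~\ref{lemma:austrianinverse}). The mapping property is immediate: the preceding theorem sends $\Lebesgue^{p}\Lambda^{k}(\Manifold,\metric)$ into $\calP\Lambda^{k}(\Mesh)$, and $\discreteinverse^{k}$ is a linear endomorphism of that finite-dimensional space. The projection property also requires no work: since $\discreteinverse^{k}$ was constructed so as to invert $\smoothedinterpol^{k}$ on $\calP\Lambda^{k}(\Mesh)$, any ${u}$ already in the finite element space satisfies $\smoothedproj^{k}{u} = \discreteinverse^{k} \smoothedinterpol^{k} {u} = {u}$. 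Commutativity with $\cartan$ follows from the short chain
\[
\cartan \smoothedproj^{k} {u}
= \cartan \discreteinverse^{k} \smoothedinterpol^{k} {u}
= \discreteinverse^{k+1} \cartan \smoothedinterpol^{k} {u}
= \discreteinverse^{k+1} \smoothedinterpol^{k+1} \cartan {u}
= \smoothedproj^{k+1} \cartan {u},
\]
which invokes only the already established $\cartan$-commutativity of each factor.

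The one step that goes beyond pure composition is the global $\Lebesgue^{p}$ bound on $\smoothedinterpol^{k}$, which I would obtain by a local-to-global assembly. Starting from the local estimate
\[
\| \smoothedinterpol^{k}{u} \|_{\Lebesgue^{p}\Lambda^{k}(T)}
\leq C_{0} \| {u} \|_{\Lebesgue^{p}\Lambda^{k}(T^{\star})}
\]
provided by the preceding theorem with $C_{0}$ uniform over $T \in \Mesh^{n}$, I would raise this to the $p$-th power and sum over $n$-simplices, using that each simplex belongs to at most $\shapeconadjacency$ patches $T^{\star}$ to collapse the right-hand side into
\[
\| \smoothedinterpol^{k}{u} \|_{\Lebesgue^{p}\Lambda^{k}(\Manifold,\metric)}
\leq \shapeconadjacency^{1/p} C_{0} \| {u} \|_{\Lebesgue^{p}\Lambda^{k}(\Manifold,\metric)}
\]
for $p < \infty$. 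The case $p = \infty$ is handled analogously by taking essential suprema rather than summing. Composing with the bound $\| \discreteinverse^{k} \| \leq 2$ from Lemma~\ref{lemma:austrianinverse} then produces the announced $\Lebesgue^{p}$ operator-norm bound on $\smoothedproj^{k}$.

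The main obstacle is modest and essentially amounts to bookkeeping: verifying that the constants assemble correctly and depend only on the quantities listed in Remarks~\ref{remark:summaryofmeshquantities} and~\ref{remark:summaryofFEMquantities}, the polynomial degree, and the fixed smallness parameters $L$ and $\epsilon$ constrained earlier in Theorem~\ref{theorem:globalsmoothing} and Theorem~\ref{theorem:fehlerschranke}. No genuinely new analytic ingredient is needed beyond the preceding sections, which is why the theorem appears here as a direct consequence of the prior constructions.
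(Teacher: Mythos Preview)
Your proposal is correct and matches the paper's approach: the paper does not even provide a separate proof for this theorem, treating it as an immediate consequence of the preceding construction, and your argument simply spells out the composition steps and the local-to-global patch-counting that the paper leaves implicit. Your added detail on summing the local $\Lebesgue^{p}$ estimates over $n$-simplices with the overlap bound $\shapeconadjacency$ is exactly the bookkeeping the paper silently assumes.
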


This completes the construction of the smoothed projection.
In particular, the following diagram commutes: 
\begin{gather*}
    \begin{CD}
        \Lebesgue^{2}\Lambda^{0}(\Manifold,\metric) 
        @>\cartan>>
        \Lebesgue^{2}\Lambda^{1}(\Manifold,\metric) 
        @>\cartan>>
        \dots 
        @>\cartan>>
        \Lebesgue^{2}\Lambda^{n}(\Manifold,\metric) 
        \\
        @V{\smoothedproj^{0}}VV @V{\smoothedproj^{1}}VV @. @V{\smoothedproj^{n}}VV
        \\
        \calP\Lambda^{0}(\Mesh) 
        @>\cartan>>
        \calP\Lambda^{1}(\Mesh) 
        @>\cartan>>
        \dots 
        @>\cartan>>
        \calP\Lambda^{n}(\Mesh) 
        . 
    \end{CD}
\end{gather*}
The remainder of this text is dedicated to applications of the smoothed projection in the error analysis of the Hodge--Laplace equation.

\section{Application to the Hodge--Laplace equation and error estimates}\label{sec:applications}

We review the Hodge--Laplace equation over manifolds and the Galerkin theory of Hilbert complexes. 
We thus derive error estimates for the conforming intrinsic finite element method,
which is the prototypical application of the smoothed projection. 

We let $\Manifold$ be a smooth manifold with Riemannian metric $\metric$. 
We abbreviate $H\Lambda^{k}(\Manifold,\metric) := \Sobdiff^{2,2}\Lambda^{k}(\Manifold,\metric)$. 
Note that $\Cont^{\infty}\Lambda^{k}(\Manifold)$ is dense in $H\Lambda^{k}(\Manifold,\metric)$. 
Definitions imply that $\cartan {u} \in H\Lambda^{k+1}(\Manifold,\metric)$ for each ${u} \in H\Lambda^{k}(\Manifold,\metric)$. 
The scalar product 
\begin{gather*}
 \llangle {u}, {v} \rrangle_{\metric}
 :=
 \langle {u}, {v} \rangle_{H\Lambda^{k  }(\Manifold,\metric)}
 :=
 \langle {u}, {v} \rangle_{\Lebesgue^{2}\Lambda^{k  }(\Manifold,\metric)}
 +
 \langle \cartan{u}, \cartan{v} \rangle_{\Lebesgue^{2}\Lambda^{k+1}(\Manifold,\metric)},
 \quad 
 {u}, {v} \in H\Lambda^{k}(\Manifold,\metric)
 ,
\end{gather*}
makes $H\Lambda^{k}(\Manifold,\metric)$ a Hilbert space. 
The exterior derivative provides a closed densely-defined unbounded operator 
\begin{align*}
    \cartan_{k} : H\Lambda^{k}(\Manifold,\metric) \subseteq \Lebesgue^{2}\Lambda^{k}(\Manifold,\metric) \rightarrow \Lebesgue^{2}\Lambda^{k+1}(\Manifold,\metric)
\end{align*}
We write $\cartan^{\ast}_{k+1} : H^{\ast}\Lambda^{k+1}(\Manifold,\metric) \subseteq \Lebesgue^{2}\Lambda^{k+1}(\Manifold,\metric) \rightarrow \Lebesgue^{2}\Lambda^{k}(\Manifold,\metric)$ for the adjoint of the operator $\cartan^{k} : H\Lambda^{k}(\Manifold,\metric) \subseteq \Lebesgue^{2}\Lambda^{k}(\Manifold,\metric) \rightarrow \Lebesgue^{2}\Lambda^{k+1}(\Manifold,\metric)$ in the sense of unbounded operators.
This is also known as \emph{codifferential} and satisfies $\cartan^{\ast}_{k} = (-1)^{k+1} \star_{\metric}^{-1} \cartan^{n-k-1} \star_{\metric}$. 
The adjoint is closed and densely-defined again. 

Recall the $\Lebesgue^{2}$ de~Rham complex: 
\begin{gather}\label{math:l2derhamcomplex:primary}
 \begin{CD}
  0 \to
  \Lebesgue^{2}\Lambda^{0}(\Manifold,\metric)
  @>{\cartan^{0}}>>
  \Lebesgue^{2}\Lambda^{1}(\Manifold,\metric)
  @>{\cartan^{1}}>>
  \dots 
  @>{\cartan^{n-1}}>>
  \Lebesgue^{2}\Lambda^{n}(\Manifold,\metric)
  \to 
  0
  .
 \end{CD}
\end{gather}
We also consider an adjoint $\Lebesgue^{2}$ de~Rham complex 
\begin{gather}\label{math:l2derhamcomplex:adjoint}
 \begin{CD}
  0 \gets
  \Lebesgue^{2}\Lambda^{0}(\Manifold,\metric)
  @<{\cartan^{\ast}_{0}}<<
  \Lebesgue^{2}\Lambda^{1}(\Manifold,\metric)
  @<{\cartan^{\ast}_{1}}<<
  \dots 
  @<{\cartan^{\ast}_{n}}<<
  \Lebesgue^{2}\Lambda^{n}(\Manifold,\metric)
  \gets 
  0
  .
 \end{CD}
\end{gather}
The following notation will be helpful: 
\begin{gather}
 \frakZ^{k}(\Manifold,\metric)
 :=
 \left\{\; 
  {u} \in H\Lambda^{k}(\Manifold,\metric)
  \suchthat*
  \cartan {u} = 0
 \;\right\}
 ,
 \\
 \frakB^{k}(\Manifold,\metric)
 :=
 \left\{\; 
  {u} \in H\Lambda^{k}(\Manifold,\metric)
  \suchthat*
  \exists {v} \in H\Lambda^{k-1}(\Manifold,\metric) : \cartan {v} = {u}
 \;\right\}
 .
\end{gather}
Obviously, $\frakB^{k}(\Manifold,\metric) \subseteq \frakZ^{k}(\Manifold,\metric)$.
In general, this inclusion is not the identity. 
In fact, $\frakB^{k}(\Manifold,\metric)$ might not even be closed if the manifold is not compact. 

We henceforth assume that $\Manifold$ is a compact manifold. 
Then the operators in both complexes~\eqref{math:l2derhamcomplex:adjoint} and~\eqref{math:l2derhamcomplex:adjoint} have closed ranges. 
Thus there exists a constant $C_{\PF} > 0$ such that 
\begin{align*}
    \| v \|_{\Lebesgue^{2}\Lambda^{k  }(\Manifold,\metric)} 
    \leq C_{\PF} 
    \| \cartan v \|_{\Lebesgue^{2}\Lambda^{k+1}(\Manifold,\metric)}
\end{align*}
for all $v \in H\Lambda^{k}(\Manifold,\metric)$ that are orthogonal to the kernel of $\cartan^{k} : H\Lambda^{k}(\Manifold,\metric) \rightarrow \Lebesgue^{2}\Lambda^{k+1}(\Manifold,\metric)$. 
We define the spaces of \emph{harmonic $k$-forms} 
\begin{align*}
    \frakH^{k}(\Manifold,\metric)
    :=
    \left\{
        \xi \in H\Lambda^{k}(\Manifold,\metric) \suchthat* \cartan^{k} \xi = 0 \text{ and } \xi \perp_{\metric} \cartan^{k-1} H\Lambda^{k-1}(\Manifold,\metric)
    \;\right\}
    .
\end{align*}
As the differential operators have closed ranges, we have the orthogonal Hodge decomposition 
\begin{align*}
    H\Lambda^{k}(\Manifold,\metric)
    :=
    \frakB^{k}(\Manifold,\metric) 
    \oplus
    \frakH^{k}(\Manifold,\metric)
    \oplus 
    \frakZ^{k}(\Manifold,\metric)^{\perp_{\metric}}
    .
\end{align*}
A fundamental result of differential topology states that the dimension of $\frakH^{k}(\Manifold,\metric)$ is finite and equals the $k$-th Betti number of the manifold, 
\begin{gather}
    \dim \frakH^{k}(\Manifold,\metric) = b_{k}(\Manifold)
    ,
\end{gather}
where the Betti number is a purely topological concept.

\begin{remark}\label{remark:bettinumbers}
    The Betti numbers $b_{k}(\Manifold)$ are topological invariants of the compact manifold $\Manifold$. 
    The $k$-th Betti number is the dimension of the $k$-th singular homology groups of $\Manifold$; see~\cite{spanier1995algebraic} for a thorough discussion.
    For example, $b_{0}(\Manifold)$ is the number of connected components of $\Manifold$
    and, accordingly, $\frakH\Lambda^{0}(\Manifold,\metric)$ is the space of locally constant functions.
\end{remark}

We define the Hodge--Laplace operator 
\begin{align*}
 \Laplace_{k} : \Lebesgue^{2}\Lambda^{k}(\Manifold,\metric) \rightarrow \Lebesgue^{2}\Lambda^{k}(\Manifold,\metric),
 \quad 
 {u} \mapsto \cartan^{\ast}_{k} \cartan^{k} {u} + \cartan^{k-1} \cartan^{\ast}_{k-1} {u}
\end{align*}
as an unbounded operator.
$\Laplace_{k}$ is a self-adjoint closed densely-defined operator with closed range. 
Its kernel, which is also the complement of its range, equals $\frakH^{k}(\Manifold,\metric)$.

The Hodge--Laplace equation with right-hand side $f \in \Lebesgue^{2}\Lambda^{k}(\Manifold,\metric)$ asks for the unknown $u \in \dom(\Laplace_{k})$ 
and the Lagrange multiplier $p \in \frakH^{k}(\Manifold,\metric)$ such that 
\begin{align}\label{math:hodgelaplace:strongformulation}
 \Laplace_{k} u + p = f, \quad u \perp_{\metric} \frakH^{k}(\Manifold,\metric).
\end{align}
We remark that $\Laplace_{k}$ has a bounded generalized inverse $\Laplace_{k}^{\dagger} : \Lebesgue^{2}\Lambda^{k}(\Manifold,\metric) \rightarrow \Lebesgue^{2}\Lambda^{k}(\Manifold,\metric)$. Its operator norm is bounded in terms of the Poincar\'e--Friedrichs constant. Its kernel is $\frakH^{k}(\Manifold,\metric)$. Since $u = \Laplace_{k}^{\dagger} f$ in~\eqref{math:hodgelaplace:strongformulation}, we regard $\Laplace_{k}^{\dagger}$ as the solution operator of the Hodge--Laplace equation. 
Over compact manifolds, $\Laplace_{k}^{\dagger}$ has a discrete spectrum. 

While the strong formulation~\eqref{math:hodgelaplace:strongformulation} leads to theoretical insights, 
the following weak formulation is more amenable for finite element methods. 
Given $f \in \Lebesgue^{2}\Lambda^{k}(\Manifold,\metric)$, we search for $\sigma \in H\Lambda^{k-1}(\Manifold,\metric)$, $u \in H\Lambda^{k}(\Manifold,\metric)$,
and $p \in \frakH^{k}(\Manifold,\metric)$ such that 
\begin{equation}\label{math:hodgelaplace:weakformulation}
    \begin{alignedat}{2}
        \left\llangle \sigma, \tau \right\rrangle_{\metric}
        - 
        \left\llangle u, \cartan \tau \right\rrangle_{\metric}
        &= 0, 
        &\quad \tau &\in H\Lambda^{k-1}(\Manifold,\metric),
        \\
        \left\llangle \cartan \sigma, v \right\rrangle_{\metric}
        + 
        \left\llangle \cartan u, \cartan v \right\rrangle_{\metric}
        + 
        \left\llangle p, v \right\rrangle_{\metric} 
        &=
        \left\llangle f, v \right\rrangle_{\metric}, 
        &\quad
        v &\in H\Lambda^{k}(\Manifold,\metric),
        \\
        \left\llangle u, q \right\rrangle_{\metric} &= 0, 
        &\quad
        q &\in \frakH^{k}(\Manifold,\metric)
        .
    \end{alignedat}
\end{equation}
Here, the first equation is equivalent to $\sigma = \cartan^{\ast}_{k} u$
and thus enforces $u \in \dom(\Laplace_{k})$. 
This weak formulation is the starting point for our Galerkin theory of Hilbert complexes. 
Problem~\eqref{math:hodgelaplace:weakformulation} has a unique solution $(\sigma,u,p)$ that satisfies 
\begin{gather*}
    \| \sigma \|_{H\Lambda^{k-1}(\Manifold,\metric)}
    + 
    \| u \|_{H\Lambda^{k}(\Manifold,\metric)}
    +
    \|p\|_{\Lebesgue^2\Lambda^{k}(\Manifold,\metric)} 
    \leq 
    C \| f \|_{\Lebesgue^2\Lambda^{k}(\Manifold,\metric)}.
\end{gather*}
Here, the constant $C$ only depends on the Poincar\'e--Friedrichs constant $C_{\PF}$. 

\subsection{Intrinsic discretization}
Let us suppose that $\Mesh$ is a finite triangulation of the compact manifold $\Manifold$.
We fix a finite element de~Rham complex 
\begin{gather*}
 \begin{CD}
  0 \to
  \calP\Lambda^{0}(\Mesh)
  @>{\cartan^{0}}>>
  \calP\Lambda^{1}(\Mesh)
  @>{\cartan^{1}}>>
  \dots 
  @>{\cartan^{n-1}}>>
  \calP\Lambda^{n}(\Mesh)
  \to 
  0
  .
 \end{CD}
\end{gather*}
We equip these spaces with the Hilbert space structures given by the $\Lebesgue^{2}$ de~Rham complex. 
The space of discrete harmonic forms is
\begin{align*}
 \frakH^{k}(\Mesh,\metric) 
 := 
 \left\{\; 
    \xi_{h} \in \calP\Lambda^{k}(\Mesh) 
    \suchthat* 
    \cartan^{k} \xi_{h} = 0, \xi_{h} \perp_{\metric} \cartan^{k-1} \calP\Lambda^{k-1}(\Mesh) 
 \;\right\}
 ,
\end{align*}
and we introduce the discrete cycles and boundaries,
\begin{gather*}
 \frakZ^{k}(\Mesh)
 :=
 \left\{\; 
  {u}_{h} \in \calP\Lambda^{k}(\Mesh)
  \suchthat*
  \cartan {u}_{h} = 0
 \;\right\}
 ,
 \\
 \frakB^{k}(\Mesh)
 :=
 \left\{\; 
  {u}_{h} \in \calP\Lambda^{k}(\Mesh)
  \suchthat*
  \exists {v}_{h} \in \calP\Lambda^{k-1}(\Mesh) : \cartan {v}_{h} = {u}_{h}
 \;\right\}
 .
\end{gather*}
Together with the smoothed projections, we have the following commuting diagram:
\begin{gather*}
 \begin{CD}
  \Lebesgue^{2}\Lambda^{0}(\Manifold,\metric) 
  @>\cartan>>
  \Lebesgue^{2}\Lambda^{1}(\Manifold,\metric) 
  @>\cartan>>
  \dots 
  @>\cartan>>
  \Lebesgue^{2}\Lambda^{n}(\Manifold,\metric) 
  \\
  @V{\smoothedproj^{0}}VV @V{\smoothedproj^{1}}VV @. @V{\smoothedproj^{n}}VV
  \\
  \calP\Lambda^{0}(\Mesh) 
  @>\cartan>>
  \calP\Lambda^{1}(\Mesh) 
  @>\cartan>>
  \dots 
  @>\cartan>>
  \calP\Lambda^{n}(\Mesh) 
\end{CD}
\end{gather*}
The Poincar\'e--Friedrichs constant of the finite element de~Rham complex is uniformly bounded:  
Theorem~3.6 of~\cite{AFW2} shows that, with $C_{\PF,h} := C_{\PF}\|\smoothedproj_h^{k}\|$, we have 
\begin{align*}
    \| v_{h} \|_{\Lebesgue^2\Lambda^{k}(\Manifold,\metric)}
    \leq
    C_{\PF,h}
    \cdot 
    \| \cartan v_{h} \|_{\Lebesgue^2\Lambda^{k}(\Manifold,\metric)}
\end{align*}
for all $v_{h} \in \calP\Lambda^{k}(\Mesh)$ that are orthogonal to the kernel of $\cartan : \calP\Lambda^{k}(\Mesh) \rightarrow \calP\Lambda^{k+1}(\Mesh)$. 

Furthermore, Theorem~3.5 in~\cite{AFW1} shows that 
\begin{gather}
    \| q - P_{\frakH_h} q \|_{\Lebesgue^2\Lambda^{k}(\Manifold,\metric)}
    \leq
    \| q - \smoothedproj_h^{k} q \|_{\Lebesgue^2\Lambda^{k}(\Manifold,\metric)},
    \quad
    q \in \frakH^{k}(\Manifold,\metric),
    \\
    \| q_{h} - P_{\frakH} q_{h} \|_{\Lebesgue^2\Lambda^{k}(\Manifold,\metric)}
    \leq
    \| P_{\frakH} q_{h} - \smoothedproj_h^{k} P_{\frakH} q_{h} \|_{\Lebesgue^2\Lambda^{k}(\Manifold,\metric)},
    \quad
    q_{h} \in \frakH^{k}(\Mesh,\metric).
\end{gather}
Note that $\dim \frakH^{k}(\Manifold,\metric) = \dim \frakH^{k}(\Mesh,\metric)$ follows from the first estimate provided that the best-approximation error is small enough.
Alternatively, this identity can be shown by purely algebraic means (see~\cite{licht2016complexes}).

Consider now the following discrete weak formulation of the Hodge--Laplace equation:
given $f \in \Lebesgue^{2}\Lambda^{k}(\Manifold,\metric)$, we search for $\sigma_{h} \in \calP\Lambda^{k-1}(\Mesh)$, $u_{h} \in \calP\Lambda^{k}(\Mesh)$, and $p_{h} \in \frakH^{k}(\Mesh,\metric)$ such that 
\begin{equation}
    \begin{alignedat}{2}
        \left\llangle \sigma_{h}, \tau_{h} \right\rrangle_{\metric}
        - 
        \left\llangle u_{h}, \cartan \tau_{h} \right\rrangle_{\metric}
        &= 0, 
        &\quad \tau_{h} &\in \calP\Lambda^{k-1}(\Mesh),
        \\
        \left\llangle \cartan \sigma_{h}, v_{h} \right\rrangle_{\metric}
        + 
        \left\llangle \cartan u_{h}, \cartan v_{h} \right\rrangle_{\metric}
        + 
        \left\llangle p_{h}, v_{h} \right\rrangle_{\metric} 
        &=
        \left\llangle f, v_{h} \right\rrangle_{\metric}, 
        &\quad
        v_{h} &\in \calP\Lambda^{k}(\Mesh),
        \\
        \left\llangle u_{h}, q_{h} \right\rrangle_{\metric} &= 0, 
        &\quad
        q_{h} &\in \frakH^{k}(\Mesh,\metric)
        .
    \end{alignedat}
\end{equation}
By the Galerkin theory of Hilbert complexes, 
this discrete formulation has a unique solution which is bounded in terms of the right-hand side:
\begin{gather*}
    \| \sigma_{h} \|_{H\Lambda^{k-1}(\Manifold,\metric)}
    + 
    \| u_{h} \|_{H\Lambda^{k}(\Manifold,\metric)}
    +
    \| p_{h} \|_{\Lebesgue^2\Lambda^{k}(\Manifold,\metric)} 
    \leq 
    C \| f \|_{\Lebesgue^2\Lambda^{k}(\Manifold,\metric)}, 
\end{gather*}
where the constant $C$ only depends on the discrete Poincar\'e--Friedrichs constant. 

We discuss the error theory of this Galerkin method. 
We use the following abbreviations for the best approximation errors:
\begin{subequations}\label{math:bestapproximationerror}
\begin{gather}
    \label{math:bestapproximationerror:w}
    E( u ) := \inf_{ v \in \calP\Lambda^{k}(\Mesh) }\| u - v \|_{\Lebesgue^2\Lambda^{k}(\Manifold,\metric)}, \quad u \in \Lebesgue^2\Lambda^{k}(\Manifold,\metric),
    \\
    \label{math:bestapproximationerror:v}
    E_\cartan( u ) := \inf_{ v \in \calP\Lambda^{k}(\Mesh) }\| u - v \|_{H\Lambda^{k}(\Manifold,\metric)}, \quad u \in H\Lambda^{k}(\Manifold,\metric).
\end{gather}
\end{subequations}
Note that the latter is bounded by the componentwise best approximation errors:
\begin{align*}
    E_\cartan( u )
    &\leq
    \inf_{ v \in \calP\Lambda^{k}(\Mesh) } \left( \| u - v \|_{\Lebesgue^2\Lambda^{k}(\Manifold,\metric)} + \| \cartan(u - v) \|_{\Lebesgue^2\Lambda^{k+1}(\Manifold,\metric)} \right)
    \\&\leq
    \| u - \smoothedproj^{k} u \|_{\Lebesgue^2\Lambda^{k}(\Manifold,\metric)} + \| \cartan(u - \smoothedproj^{k} u) \|_{\Lebesgue^2\Lambda^{k+1}(\Manifold,\metric)} 
    \\&\leq
    \| u - \smoothedproj^{k} u \|_{\Lebesgue^2\Lambda^{k}(\Manifold,\metric)} + \| \cartan u - \smoothedproj^{k+1} \cartan u \|_{\Lebesgue^2\Lambda^{k+1}(\Manifold,\metric)}
\leq 
    E( u )
    +
    E( \cartan u )
    .
\end{align*}
The Galerkin solution is a quasi-optimal approximation of the true solution (\cite[Theorem~3.9]{AFW2}):
\begin{align*}
    &
    \|\sigma-\sigma_h\|_{H\Lambda^{k-1}(\Manifold,\metric)}
    +
    \|u-u_h\|_{H\Lambda^{k}(\Manifold,\metric)}
    +
    \|p - p_h\|_{\Lebesgue^{2}\Lambda^{k}(\Manifold,\metric)}
\\
    &\qquad\qquad\qquad\qquad\qquad
    \leq C\bigg(
        E_d( \sigma )
        +
        E_d( u )
        +
        E_d( p )
        +
        \coeff_{\mu}
        E_d( P_\frakB u )
    \bigg),
\end{align*}
where $P_\frakB : \Lebesgue^{2}\Lambda^{k}(\Manifold,\metric) \rightarrow \frakB^{k}(\Manifold,\metric)$ is the orthogonal projection and where 
\begin{align*}
    \coeff_{\mu} := \sup_{ \substack{ q \in \frakH^{k}(\Manifold,\metric) \\ \|q\|_{\Lebesgue^{2}\Lambda^{k}(\Manifold,\metric)}=1 } } \| q - \smoothedproj_h^{k} q \|_{\Lebesgue^{2}\Lambda^{k}(\Manifold,\metric)}.
\end{align*}
The Galerkin theory of Hilbert complexes also enables additional error estimates for the different components of the error.
This matters because these components converge at different rates in practice. 
For a concise formulation, we introduce 
\begin{gather*}
    \coeff_{\delta} = \left\|(I -  \smoothedproj_h) \Laplace_{k}^{\dagger} \right\|,
    \qquad
    \coeff_{{v}} 
    = 
    \max_{j=0,1}\left( 
        \left\|(I -  \smoothedproj_h) \cartan_{k-j}      \Laplace_{k-j}^{\dagger} \right\|, 
        \left\|(I -  \smoothedproj_h) \cartan_{k+j}^\ast \Laplace_{k+j}^{\dagger} \right\| 
    \right)
\end{gather*}
in addition to $\coeff_{\mu}$ as defined above. 
Theorem~3.11 of~\cite{AFW2} provides the following estimates:
there exists a constant $C > 0$, only depending on $C_{\PF}$ and the operator norm of the projection, such that 
\begin{align*}
    \| \cartan( \sigma - \sigma_h ) \|_{\metric} &\leq C E( \cartan \sigma ),
    \\
    \| \sigma - \sigma_h \|_{\metric} &\leq C\left( E( \sigma ) + \coeff_{{v}} E( \cartan \sigma ) \right),
    \\
    \| p - p_h \|_{\metric} &\leq C \left( E(p) + \coeff_\mu E( \cartan \sigma ) \right), 
    \\
    \| \cartan(u - u_h) \|_{\metric} &\leq C\left( E(\cartan u) + \coeff_{{v}}\left( E(\cartan \sigma) + E(p) \right) \right),
    \\
    \| u - u_h \|_{\metric} &\leq C\left( E(u) + \coeff_{{v}} \left( E(\cartan u) + E(\sigma) \right) + (\coeff_{{v}}^2 + \coeff_\delta) \left( E(\cartan \sigma) + E(p) \right) + \coeff_\mu E(P_{\frakB}u) \right).
\end{align*}
We emphasize and caution the reader that the estimates up to this point do not yet enable convergence rates in terms of mesh sizes and the regularity of the solution. 
While we have estimated the Galerkin error terms from above by combinations of best approximation errors, 
these best approximation error terms~\eqref{math:bestapproximationerror} and the error coefficients $\coeff_\mu$, $\coeff_\delta$, and $\coeff_{{v}}$ still require upper bounds in terms of the mesh size. 
This matter is not fully trivial since we work on a manifold and therefore the well-known polynomial approximation estimates are not available; after all, there is no intrinsic sense of polynomials over a manifold.

A thorough discussion will come further below in the text after we have examined the geometric discretization in more detail.
The geometric discretization is subject of the subsequent and last section.

\section{Computational triangulations}\label{sec:embeddedtriangulations}

Up to now, we have studied partial differential equations over a compact \emph{physical manifold} 
and intrinsic finite element methods that are defined with respect to an intrinsic triangulation.
However, this intrinsic finite element problem might not be computable in practice. 
Practical finite element computations ultimately reduce, at least implicitly, to an approximate finite element problem over an extrinsic affine triangulation.
While the intrinsic triangulations might be available in many special cases, 
we generally can only implement extrinsic triangulations. 

This section is dedicated to clarifying the transition to computable, implementable problem. 
We are gradually proceeding with transition, 
isolating the parameters that are actually contingent upon the specific implementation.
\\

We assume to have an affine \emph{computational triangulation} $\extMesh$, which triangulates the \emph{computational manifold} $\extManifold$. The latter manifold is a Lipschitz manifold in all applications. 
Let $\homeo \colon \extManifold \rightarrow \Manifold$ be a bi-Lipschitz homeomorphism
such that for every $S \in \extMesh$ the restriction $\homeo_{|S} \colon S \rightarrow \homeo(S)$ is a diffeomorphism. 

This defines the intrinsic triangulation $\Mesh$ over the physical manifold $\Manifold$:
whenever $\incl_{S} : \Delta_{\Dim} \rightarrow \Manifold$ is any $d$-dimensional affine simplex of the extrinsic triangulation, the composition $\incl_{T} = \homeo \circ \incl_{S}$ is a smooth simplex with image $T = \incl_{T}(S)$. 
It is easily seen that the images of the simplices in $\extMesh$ under $\homeo$ constitute an intrinsic triangulation $\Mesh$.

We equip every simplex $S \in \extMesh$ with the (intrinsic) Euclidean metric. 
Now we see that the shape constants of the intrinsic triangulation are controlled in terms of the aspect ratios of extrinsic simplices and the (piecewise) smoothness of the homeomorphism $\homeo$. 
Specifically, suppose that $T \in \calT$ with $T = \homeo(S)$ is a simplex of dimension $d$.
A higher-order chain rule leads to 
\begin{align*}
    \left| \Jacobian^{m} \incl_T \right|_{\Lebesgue^{\infty}(\Delta_{d},\metric)} 
    &\leq
    C
    \left\| \Jacobian^{m} \homeo_{|S} \right\|_{\Lebesgue^{\infty}(S,\metric)} 
    h_{S}^{m}
    &\leq
    C
    \left\| \Jacobian^{m} \homeo_{|S} \right\|_{\Lebesgue^{\infty}(S,\metric)} 
    \left\| \Jacobian\homeo_{|S}^{-1} \right\|_{\Lebesgue^{\infty}(T,\metric)}^{m}
    h_{T}^{m}
\end{align*}
with some constant $C > 0$ that only depends on $m$, $\dim(\Delta_{d}) \leq n$ and the shape regularity of $\extMesh$. 
Similarly, the chain rule also implies the estimate 
\begin{align*}
    \left| \Jacobian\incl_T^{-1} \right|_{\Lebesgue^{\infty}(T,\metric)} 
    \leq
    C
    \left\| \Jacobian\homeo_{|S}^{-1} \right\|_{\Lebesgue^{\infty}(T,\metric)} 
    h_{S}^{-1}
    \leq
    C
    \left\| \Jacobian\homeo_{|S}^{-1} \right\|_{\Lebesgue^{\infty}(T,\metric)} 
    \left\| \Jacobian\homeo_{|S}      \right\|_{\Lebesgue^{\infty}(S,\metric)}
    h_{T}^{-1}
    .
\end{align*}

\begin{remark}
    In other words, $\homeo$ is a bi-Lipschitz mapping from the computational onto the physical manifold that preserves the triangulation structure.
    Moreover, it features additional regularity over each simplex.
    This homeomorphism may be explicitly computable in some applications 
    but it may also be a purely theoretical device in the error analysis. 
In particular, the $m$-th shape constant of $\Mesh$ is bounded in terms of the bi-Lipschitz constant of $\homeo$, its piecewise regularity, and the shape regularity of the extrinsic mesh $\extMesh$. 
We note that $\homeo$ being bi-Lipschitz is the minimum regularity necessary for defining the computational problem in the first place. 
\end{remark}

This transformation, whether computationally available or not, plays the following role: 
we transform the intrinsic finite element method over the original physical manifold 
to an equivalent finite element method over the extrinsic manifold. 
This is a simple change of variables and produces an \emph{extrinsic problem}.
What we implement is a finite element for this extrinsic problem. 

Towards that end, we transfer the Riemannian metric over the physical manifold onto the computational manifold. 
Each cell $S \in \extMesh$ can be equipped with a Riemannian metric $\extrinsic{\metric}$
which is the pullback of the Riemannian metric $\metric$ over the physical manifold. 
We recall the finite element de~Rham complex over the physical manifold, 
but this time we indicate the Riemannian metric used:
\begin{gather*}
 \begin{CD}
  0 \to
  \calP\Lambda^{0}(\Mesh,\metric)
  @>{\cartan^{0}}>>
  \calP\Lambda^{1}(\Mesh,\metric)
  @>{\cartan^{1}}>>
  \dots 
  @>{\cartan^{n-1}}>>
  \calP\Lambda^{n}(\Mesh,\metric)
  \to 
  0
  .
 \end{CD}
\end{gather*}
We consider another finite element de~Rham complex over the extrinsic manifold: 
\begin{gather*}
 \begin{CD}
  0 \to
  \calP\Lambda^{0}(\extMesh,\extrinsic{\metric})
  @>{\cartan^{0}}>>
  \calP\Lambda^{1}(\extMesh,\extrinsic{\metric})
  @>{\cartan^{1}}>>
  \dots 
  @>{\cartan^{n-1}}>>
  \calP\Lambda^{n}(\extMesh,\extrinsic{\metric})
  \to 
  0
  .
 \end{CD}
\end{gather*}
By construction, $\calP\Lambda^{k}(\extMesh,\extrinsic{\metric}) = \homeo^{\ast} \calP\Lambda^{k}(\Mesh,\metric)$ and we have a commuting diagram 
\begin{align*}
  \xymatrix@=3em{
    \cdots \ar[r] 
    & 
    \calP\Lambda^{k}(\Mesh,\metric) 
    \ar@<0.5em>[d]^{ \homeo_{   }^{\ast} }
    \ar[r]^{\cartan^{k}}
    & 
    \calP\Lambda^{k+1}(\Mesh,\metric) 
    \ar@<0.5em>[d]^{ \homeo_{   }^{\ast} } 
    \ar[r] 
    & 
    \cdots \phantom{.}
    \\
    \cdots 
    \ar[r] 
    & 
    \calP\Lambda^{k}(\extMesh,\extrinsic{\metric}) 
    \ar@<0.5em>[u]^{ \homeo_{   }^{-\ast} } 
    \ar[r]^{\cartan^{k}}
    &
    \calP\Lambda^{k+1}(\extMesh,\extrinsic{\metric}) 
    \ar@<0.5em>[u]^{ \homeo_{   }^{-\ast} }
    \ar[r]
    & 
    \cdots 
} 
\end{align*}
These are not only isomorphisms but also isometries.
Consequently, we have identities for the inner products, 
\begin{align*}
    \llangle \extrinsic{{u}}_{h}, \extrinsic{{v}}_{h} \rrangle_{\extrinsic{\metric}}
    =
    \llangle \homeo^{-\ast}\extrinsic{{u}}_{h}, \homeo^{-\ast}\extrinsic{{v}}_{h} \rrangle_{\metric}
\end{align*}
the Poincar\'e--Friedrichs constant $\extrinsic{C}_{\PF}$ of the extrinsic finite element de~Rham complex
equals the Poincar\'e--Friedrichs constant $C_{\PF}$ over the intrinsic finite element de~Rham complex,
and the spaces of harmonic forms of the extrinsic finite element de~Rham complex,
\begin{align*}
 \frakH^{k}(\extMesh,\extrinsic{\metric}) := \left\{\; \xi_{h} \in \calP\Lambda^{k}(\extMesh,\extrinsic{\metric}) \suchthat* \cartan^{k} \xi_{h} = 0 \text{ and } \xi_{h} \perp_{\extrinsic{\metric}} \cartan^{k-1} \calP\Lambda^{k-1}(\extMesh,\extrinsic{\metric}) \;\right\}
 ,
\end{align*}
are isomorphically mapped onto $\frakH^{k}(\Mesh,\metric)$ along the homeomorphism. 

We consider the following finite element Hodge--Laplace equation over the extrinsic triangulation: 
given $\extrinsic{f} \in \Lebesgue^{2}\Lambda^{k}(\extManifold,\extrinsic{\metric})$, we search for $\extrinsic{\sigma}_h \in \calP\Lambda^{k-1}(\extMesh,\extrinsic{\metric})$, $\extrinsic{{u}}_h \in \calP\Lambda^{k}(\extMesh,\extrinsic{\metric})$, and $p_{h} \in \frakH^{k}(\extMesh,\extrinsic{\metric})$ such that 
\begin{equation}
    \begin{alignedat}{2}
        \left\llangle \extrinsic{\sigma}_h, \extrinsic{\tau}_h \right\rrangle_{\extrinsic{\metric}}
        - 
        \left\llangle \extrinsic{{u}}_h, \cartan \extrinsic{\tau}_h \right\rrangle_{\extrinsic{\metric}}
        &= 0, 
        &\quad \extrinsic{\tau}_h &\in \calP\Lambda^{k-1}(\extMesh,\extrinsic{\metric}),
        \\
        \left\llangle \cartan \extrinsic{\sigma}_h, \extrinsic{{v}}_h \right\rrangle_{\extrinsic{\metric}}
        + 
        \left\llangle \cartan \extrinsic{{u}}_h, \cartan \extrinsic{{v}}_h \right\rrangle_{\extrinsic{\metric}}
        + 
        \left\llangle \extrinsic{p}_h, \extrinsic{{v}}_h \right\rrangle_{\extrinsic{\metric}} 
        &=
        \left\llangle \extrinsic{f}, \extrinsic{{v}}_h \right\rrangle_{\extrinsic{\metric}}, 
        &\quad
        \extrinsic{{v}}_h &\in \calP\Lambda^{k}(\extMesh,\extrinsic{\metric}),
        \\
        \left\llangle \extrinsic{{u}}_h, \extrinsic{q}_h \right\rrangle_{\extrinsic{\metric}} &= 0, 
        &\quad
        \extrinsic{q}_h &\in \frakH^{k}(\extMesh,\extrinsic{\metric})
        ,
    \end{alignedat}
\end{equation}
where we use $\extrinsic{f} := \Theta^{\ast} f$ as the right-hand side. 
There exists a unique solution, and that solution is the pullback of the intrinsic finite element solution along the isometry $\homeo$.

\subsection{Best approximation error estimates}

We are now in the right position to resume the discussion of the best approximation error estimates that we postponed earlier in the text. 
We only address the case where very strong regularity assumptions hold. 
The case of lesser elliptic regularity for the Hodge--Laplace equation is commented on after. 

\begin{lemma}\label{theorem:bestapproximationestimate:sobolev}
    Let $v \in H^{m}\Lambda^{k}(\Manifold) \cap \Cont\Lambda^{k}(\Manifold)$ for some $m \in \bbN_0$. 
    Then 
    \begin{align*}
        &
        \inf_{ v \in \calP_{r}\Lambda^{k}(\calT) }
        \| v - v_h \|_{\Lebesgue^{2}\Lambda^{k}(\Manifold,\metric)}
        \leq 
        C 
        \left( \max_{T \in \calT} h_T^{m} \right)
        \| v \|_{H^{m}\Lambda^{k}(\Manifold,\metric)} 
        .
    \end{align*}
    Here, $C > 0$ depends only the regularity of $\calT$, the polynomial degree $r$, and $m$ and $n$.
\end{lemma}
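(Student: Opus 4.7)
The plan is to use the canonical finite element interpolant $\Interpolant^{k,r}$ from Section~\ref{sec:finiteelementspaces} as the trial near-best approximation, establish a local estimate on each top-dimensional simplex $T \in \calT$, and sum. Since $v \in \Cont\Lambda^{k}(\Manifold)$, the interpolant is pointwise well-defined via degrees of freedom, and on each $T$ it is given by $\Interpolant^{k,r}_{T} = \incl_{T}^{-\ast}\Interpolant^{n,k,r}\incl_{T}^{\ast}$. It suffices to prove the local estimate
\begin{align*}
 \|v - \Interpolant^{k,r}_{T}v\|_{\Lebesgue^{2}\Lambda^{k}(T,\metric)}
 \leq
 C\, h_{T}^{m}\,\|v\|_{\Sobolev^{m,2}\Lambda^{k}(T,\metric)}
\end{align*}
for each $n$-simplex $T$, since squaring and summing yields the global bound with the maximum mesh size.

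Next I would perform the classical scaling argument, adapted to the manifold setting. Set $\hat v := \incl_{T}^{\ast} v$. Applying Theorem~\ref{theorem:transformationssatz} in both directions and using the shape-regularity bounds $\|\Jacobian\incl_{T}\|_{\Lebesgue^{\infty}} \lesssim h_{T}$, $\|\Jacobian\incl_{T}^{-1}\|_{\Lebesgue^{\infty}} \lesssim h_{T}^{-1}$, and $\|\det\Jacobian\incl_{T}\|_{\Lebesgue^{\infty}} \simeq h_{T}^{n}$, I obtain
\begin{align*}
 \|v - \Interpolant^{k,r}_{T}v\|_{\Lebesgue^{2}\Lambda^{k}(T,\metric)}
 &\leq C\, h_{T}^{n/2-k}\,\|\hat v - \Interpolant^{n,k,r}\hat v\|_{\Lebesgue^{2}\Lambda^{k}(\Delta_{n})},
 \\
 \|\hat v\|_{\Sobolev^{m,2}\Lambda^{k}(\Delta_{n})}
 &\leq C\, h_{T}^{m+k-n/2}\,\|v\|_{\Sobolev^{m,2}\Lambda^{k}(T,\metric)},
\end{align*}
where the higher shape constants $\shapeconsimplex{l}{T}$ for $l \leq m$ enter the constant in the second inequality via \eqref{math:transformationssatz:sobolev}.

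Then I would apply the Bramble--Hilbert lemma on the fixed reference simplex $\Delta_{n}$. By Theorem~\ref{theorem:canonicalinterpolantbound}, the operator $\Interpolant^{n,k,r}$ is a bounded projection of $\Cont\Lambda^{k}(\Delta_{n})$ onto $\calP_{r}\Lambda^{k}(\Delta_{n})$, and this image contains every polynomial $k$-form of coefficient degree at most $r$. Provided that $m \leq r+1$ (the implicit regime in which the estimate is non-vacuous), the standard Bramble--Hilbert argument applied componentwise to the standard representation~\eqref{math:standardrepresentation} of $\hat v$ yields
\begin{align*}
 \|\hat v - \Interpolant^{n,k,r}\hat v\|_{\Lebesgue^{2}\Lambda^{k}(\Delta_{n})}
 \leq C\,|\hat v|_{\Sobolev^{m,2}\Lambda^{k}(\Delta_{n})}.
\end{align*}
Chaining the three estimates collapses the powers of $h_{T}$ exactly to $h_{T}^{m}$, and summation over top-dimensional simplices delivers the global result.

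The main technical obstacle is the Sobolev transformation~\eqref{math:transformationssatz:sobolev} used in the scaling step: converting between the Levi--Civita covariant derivative on $(T,\metric)$ and ordinary partial derivatives on $\Delta_{n}$ produces a chain-rule expansion whose lower-order terms involve Christoffel symbols and derivatives of $\incl_{T}$ of all orders up to $m$. One has to verify that these terms are all controlled by $\shapeconsimplex{m}{\calT}$, so that they absorb harmlessly into the constant $C$ announced in the statement; this is exactly what~\eqref{math:transformationssatz:sobolev} packages, but it is the only place where the higher-order geometric regularity of the intrinsic triangulation genuinely enters.
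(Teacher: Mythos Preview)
Your proposal is correct and follows essentially the same approach as the paper: take the canonical interpolant $\Interpolant^{k,r}v$ as the trial approximation, pull back to the reference simplex via Theorem~\ref{theorem:transformationssatz}, apply the Bramble--Hilbert lemma there, and transform back, using the shape constants to extract the factor $h_T^{m}$. Your write-up is in fact more explicit than the paper's about the powers of $h_T$, the implicit constraint $m\leq r+1$, and the role of the higher-order shape constants $\shapeconsimplex{m}{\calT}$ entering through the Sobolev transformation estimate.
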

\begin{proof}
    We let $v_{I} := \Interpolant^{k,r} v \in \calP_{r}\Lambda^{k}(\Mesh)$ 
    be the global finite element interpolant of $v$, 
    which is well-defined due to continuity of $v$. Thus 
    \begin{align*}
        \inf_{ v_{h} \in \calP_{r}\Lambda^{k}(\Mesh) }
        \| v - v_h \|_{\Lebesgue^{2}\Lambda^{k}(\Manifold,\metric)}
        \leq 
        \| v - v_I \|_{\Lebesgue^{2}\Lambda^{k}(\Manifold,\metric)}
        .
    \end{align*}
    Let $T \in \calT$ be an arbitrary $n$-simplex. 
	By the definition of $\calP_{r}\Lambda^{k}(T)$ and Theorem~\ref{theorem:transformationssatz},
    we have 
    \begin{align}\label{theorem:bestapproximationestimate:sobolev:eins}
        \| v - v_I \|_{\Lebesgue^{2}\Lambda^{k}(T,\metric)}
        \leq 
        \| \Jacobian\incl_T^{-1} \|^{k}_{\Lebesgue^{\infty}(\Delta_{\Dim})}
        \| \det \Jacobian\incl_T \|^{\frac 1 2}_{\Lebesgue^{\infty}(T)}
\| \incl_T^{\ast} v - \incl_T^{\ast} v_I \|_{\Lebesgue^{2}\Lambda^{k}(\Delta_{\Dim})}
        .
    \end{align}
    We see that $\incl_T^{\ast} v_I$ equals the canonical interpolant of $\incl_T^{\ast} v$.
	The Bramble--Hilbert lemma provides, with another constant $C>0$, the bound 
    \begin{align*}
        &
        \| \incl_T^{\ast} v - \incl_T^{\ast} v_I \|_{\Lebesgue^{2}\Lambda^{k}(\Delta_{\Dim})}
        \leq 
        C
        \| \incl_T^{\ast} v \|_{H^{m}\Lambda^{k}(\Delta_{\Dim})}
        .
    \end{align*}
    Another application of Theorem~\ref{theorem:transformationssatz} ensures that 
    \begin{align}\label{theorem:bestapproximationestimate:sobolev:zwei}
        \| \incl^{\ast} v \|_{H^{m}\Lambda^{k}(\Delta_{\Dim})}
        \leq 
        \| \Jacobian\incl_T \|^{m+k}_{\Sobolev^{m,\infty}(\Delta_{\Dim})}
        \| \det \Jacobian\incl_T^{-1} \|^{\frac 1 2}_{\Lebesgue^{\infty}(T)}
\| v \|_{H^{m}\Lambda^{k}(T)}
        .
    \end{align}
    The desired result now follows. 
\end{proof}

\begin{example}
    We assume that $f \in \Sobolev^{m,2}\Lambda^{k}(\Manifold,\metric)$. 
    Assuming full elliptic regularity, $u \in \Sobolev^{m+2,2}\Lambda^{k}(\Manifold,\metric)$,
    and thus $\cartan u \in \Sobolev^{m+1,2}\Lambda^{k}(\Manifold,\metric)$ and $\cartan^\ast u \in \Sobolev^{m+1,2}\Lambda^{k}(\Manifold,\metric)$. 
    Moreover, assume that $f$, $u$, $\cartan u$, and $\cartan^{\ast} u$ are continuous. 
    This affords 
    \begin{align}
        \coeff_{\delta} \in O(h^2), \quad \coeff_{{v}} \in O(h), \quad \coeff_{\mu} \in O(h^2).
    \end{align}
    With all that together, 
    \begin{gather*}
        \| \cartan( \sigma - \sigma_h ) \|_{\metric} \in O(h^{m}), 
        \quad 
        \| \sigma - \sigma_h \|_{\metric} \in O(h^{m+1}), 
        \quad 
        \| \cartan(u - u_h) \|_{\metric} \in O(h^{m+1}),
        \\ 
        \| p - p_h \|_{\metric} \in O(h^{m+2}), 
        \quad
        \| u - u_h \|_{\metric} \in O(h^{m+2}).
    \end{gather*}
    In other words, we obtain optimal convergence rates for each variable in the mixed finite element formulation. 
\end{example}

\begin{remark}
    Let us address in how far we can avoid the strong continuity assumption on the solution but still obtain error estimates. 
    A future contribution will provide the details.
    However, the basic outline can already be given at this place. 
    First, the two applications of Theorem~\ref{theorem:transformationssatz} in the proof of Theorem~\ref{theorem:bestapproximationestimate:sobolev},
    namely~\eqref{theorem:bestapproximationestimate:sobolev:eins} and~\eqref{theorem:bestapproximationestimate:sobolev:eins},
    still remain valid. 
    However, we cannot utilize the canonical interpolant due to lack of strong continuity. 
    Instead, $v \in H\Lambda^{k}(\extManifold,\metric)$ 
    enables the inequality 
    \begin{align*}
        &
        \inf_{ v_{h} \in \calP\Lambda^{k}(T) } \| v - v_h \|_{\Lebesgue^{2}\Lambda^{k}(T)}
        \leq 
        C
        \sum_{ T' }
        \inf_{ \substack{ v_{h} \in \calP\Lambda^{k}(T') \\ w_{h} \in \calP\Lambda^{k+1}(T') } }
        \| v - v_{h} \|_{\Lebesgue^{2}\Lambda^{k}(T')}
        + 
        \| \cartan v - w_{h} \|_{\Lebesgue^{2}\Lambda^{k+1}(T')}
        ,
    \end{align*}
    where the sum is taken over all volume simplices $T'$ that are adjacent to $T$.
    Such an inequality has been proven before for triangulations of domains~\cite{licht2021local}. 
    It is feasible to prove it for triangulated Lipschitz surfaces as well. 

    Error estimates for solutions with lower regularity is relevant in situations without full elliptic regularity. 
    This may happen, e.g., if we equip the Hilbert spaces in the $\Lebesgue^{2}$ de~Rham complex with new rough Riemannian metric of low regularity, 
    in order to model coefficients. 
\end{remark}

\subsection{The computational problem and its geometric error} 

Up to this point, we have been working over the extrinsic finite element de~Rham complex 
such that the extrinsic problem is equivalent to the intrinsic finite element problem.
The extrinsic problem is generally not computable, being equivalent to the original problem. 
However, rewriting the finite element problem over the extrinsic triangulation 
enables an easy transition to the computational problem. 

We approximate the extrinsic finite element problem by a computational finite element problem,
which will be the one that is actually implemented. 
In our formalism, we approximate the exact Riemannian metric $\extrinsic{\metric}$ by a computational Riemannian metric $\compute{\metric}$ over the extrinsic triangulation.
Implementations solve the computational problem posed over the computational manifold $\extManifold$ using this approximate metric tensor. 

There are numerous ways to define the computational problem, and examples will be provided later. 
For now, we analyze the \emph{geometric error} that is due to switching from the exact finite element problem to the computational finite element problem.
\\

We still work with the finite element de~Rham complex over $\extMesh$,
but instead of using the pullback metric tensor $\extrinsic{\metric}$, 
we use a different computational metric tensor $\compute{\metric}$.
In applications, $\compute{\metric}$ is a computationally feasible approximation of the true metric tensor $\extrinsic{\metric}$. 
We write $\calP\Lambda^{k}(\extMesh,\compute{\metric})$ for the finite element space $\calP\Lambda^{k}(\extMesh)$ equipped with the inner product induced by $\compute{\metric}$. 
This leads to a differential complex
\begin{gather*}
 \begin{CD}
  \dots  
  @>\cartan>>
  \calP\Lambda^{k  }(\extMesh,\compute{\metric}) 
  @>\cartan>>
  \calP\Lambda^{k+1}(\extMesh,\compute{\metric}) 
  @>\cartan>>
  \dots 
\end{CD}
\end{gather*}
The identity gives rise to a linear isomorphism of Hilbert spaces 
$\Ariinc : \calP\Lambda^{k}(\extMesh,\compute{\metric}) \rightarrow \calP\Lambda^{k}(\extMesh,\extrinsic{\metric})$
and we have a commuting diagram:
\begin{gather*}
 \begin{CD}
  \dots  
  @>\cartan>>
  \calP\Lambda^{k  }(\extMesh,\extrinsic{\metric}) 
  @>\cartan>>
  \calP\Lambda^{k+1}(\extMesh,\extrinsic{\metric}) 
  @>\cartan>>
  \dots 
  \\
  @. @A{\Ariinc}AA @A{\Ariinc}AA @. 
  \\
  \dots  
  @>\cartan>>
  \calP\Lambda^{k  }(\extMesh,\compute{\metric}) 
  @>\cartan>>
  \calP\Lambda^{k+1}(\extMesh,\compute{\metric}) 
  @>\cartan>>
  \dots 
\end{CD}
\end{gather*}
In what seems to be an unavoidable clash of notations,
we write $\Ariinc^{\ast}$ for its $\Lebesgue^{2}$-adjoint mapping. 
The reader is cautioned that the adjoint mapping $\Ariinc^{\ast}$ generally does not commute with the exterior derivative, unlike $\Ariinc$, and is generally not a pullback mapping.

We write $\Cari$ for its operator norm of $\Ariinc$, so that    
\begin{align*}
    \| \Ariinc{u} \|
    _{\Lebesgue^{2}\Lambda^{k}(\extManifold,\extrinsic{\metric})}
    \leq
    \Cari
    \| {u} \|
    _{\Lebesgue^{2}\Lambda^{k}(\extManifold,\compute{\metric})}
    ,
    \quad 
    {u}
    \in 
    \calP\Lambda^{k  }(\extMesh,\compute{\metric}) 
    ,
\end{align*}
which is also the operator norm of its adjoint.

The non-conforming complex satisfies a number of properties.
On the one hand, we have Poincar\'e--Friedrichs inequalities with Poincar\'e--Friedrichs constant $\compute{C}_{\PF}$, 
\begin{align}
    \compute{C}_{\PF} 
    \leq
    \Cari^{2}
    \extrinsic{C}_{\PF,h}.
\end{align}
On the other hand, we introduce the spaces of harmonic forms 
\begin{align*}
 \frakH^{k}(\extMesh,\compute{\metric}) := \left\{\; \xi_{h} \in \calP\Lambda^{k}(\extMesh,\compute{\metric}) \suchthat* \cartan^{k} \xi_{h} = 0 \text{ and } \xi_{h} \perp_{\compute{\metric}} \cartan^{k-1} \calP\Lambda^{k-1}(\extMesh,\compute{\metric}) \;\right\}
 ,
\end{align*}
Since $\Ariinc$ is an isomorphism, $\dim \frakH^{k}(\extMesh,\compute{\metric}) = \dim \frakH^{k}(\extMesh,\extrinsic{\metric})$. 

Suppose we have discrete data $\compute{f}_{h}$.
We consider the following discrete formulation, which is the one actually implemented: 
\begin{equation}
    \begin{alignedat}{2}
        \left\llangle \compute{\sigma}_h, \compute{\tau}_h \right\rrangle_{\compute{\metric}}
        - 
        \left\llangle \compute{{u}}_h, \cartan \compute{\tau}_h \right\rrangle_{\compute{\metric}}
        &= 0, 
        &\quad \compute{\tau}_h &\in \calP\Lambda^{k-1}(\extMesh,\compute{\metric}),
        \\
        \left\llangle \cartan \compute{\sigma}_h, \compute{{v}}_h \right\rrangle_{\compute{\metric}}
        + 
        \left\llangle \cartan \compute{{u}}_h, \cartan \compute{{v}}_h \right\rrangle_{\compute{\metric}}
        + 
        \left\llangle \compute{p}_h, \compute{{v}}_h \right\rrangle_{\compute{\metric}} 
        &=
        \left\llangle \compute{f}_h, \compute{{v}}_h \right\rrangle_{\compute{\metric}}, 
        &\quad
        \compute{{v}}_h &\in \calP\Lambda^{k}(\extMesh,\compute{\metric}),
        \\
        \left\llangle \compute{{u}}_h, \compute{q}_h \right\rrangle_{\compute{\metric}} &= 0, 
        &\quad
        \compute{q}_h &\in \frakH^{k}(\extMesh,\compute{\metric})
        .
    \end{alignedat}
\end{equation}
There exists a unique solution, and it satisfies 
\begin{gather*}
    \| \compute{\sigma}_h \|_{H\Lambda^{k-1}(\extManifold,\compute{\metric})}
    + 
    \| \compute{{u}}_h \|_{H\Lambda^{k}(\extManifold,\compute{\metric})}
    +
    \| \compute{p}_h \|_{\Lebesgue^2\Lambda^{k}(\extManifold,\compute{\metric})} 
    \leq 
    C \| \compute{f}_h \|_{\Lebesgue^2\Lambda^{k}(\extManifold,\compute{\metric})}
    .
\end{gather*}
Having established the stable solvability of the non-conforming finite element approximation,
we now address the effect of the variational crime. 
For this, we apply Theorem~3.10 of~\cite{HS1}:

\begin{theorem}\label{theorem:variationalcrime}
  Under the assumptions of this section, 
  \begin{align*}
    &
    \left\| \extrinsic{\sigma}_h - \Ariinc \compute{\sigma}_h \right\|_{H\Lambda^{k-1}(\extManifold,\extrinsic{\metric})}
    +
    \left\| \extrinsic{{u}}_h - \Ariinc \compute{{u}}_h \right\|_{H\Lambda^{k}(\extManifold,\extrinsic{\metric})}
    +
    \left\| \extrinsic{p}_h - \Ariinc \compute{p}_h \right\|_{\Lebesgue^{2}\Lambda^{k}(\extManifold,\extrinsic{\metric})}
    \\&\qquad\qquad
    \leq
    C \left(
      \left\| \compute{f} - \Ariinc^{\ast} \extrinsic{f} \right\|_{\Lebesgue^{2}\Lambda^{k}(\extManifold,\extrinsic{\metric})} 
      + 
      \left\| I - \Ariinc^\ast\Ariinc \right\| \left\| \compute{f} \right\|_{\Lebesgue^{2}\Lambda^{k}(\extManifold,\extrinsic{\metric})}
    \right).
  \end{align*}
\end{theorem}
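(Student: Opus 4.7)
The proof will proceed by direct appeal to the abstract variational-crime framework for Hilbert complexes encoded in Theorem~3.10 of~\cite{HS1}. That result takes as input two Hilbert complexes on a shared sequence of cochain spaces, together with a bounded cochain isomorphism between them, and produces precisely the error decomposition displayed in the statement: one term measuring data consistency through the adjoint of the isomorphism, and a second term measuring how far that isomorphism is from being an isometry. My role is simply to verify that our setup falls under the hypotheses of that abstract theorem and to invoke it.

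The plan is to set up the correspondence as follows. The two Hilbert complexes are $\bigl(\calP\Lambda^\bullet(\extMesh,\extrinsic{\metric}),\cartan\bigr)$ and $\bigl(\calP\Lambda^\bullet(\extMesh,\compute{\metric}),\cartan\bigr)$; they share the same finite-dimensional cochain spaces and the same differential, and differ only in the inner product. The cochain isomorphism is $\Ariinc$, which is the identity on the underlying vector spaces and commutes with $\cartan$ as displayed in the diagram above. Its operator norm is $\Cari$, and the same bound holds for $\Ariinc^{-1}$ and for the $\Lebesgue^{2}$-adjoint $\Ariinc^\ast$. The relevant hypotheses of the abstract theorem---uniform boundedness of $\Ariinc$, commutation with $\cartan$, stable solvability of the mixed problem in the perturbed inner product via $\compute{C}_{\PF}\leq \Cari^{2}\extrinsic{C}_{\PF,h}$, and the equality $\dim\frakH^{k}(\extMesh,\compute{\metric})=\dim\frakH^{k}(\extMesh,\extrinsic{\metric})$---have all been established earlier in this section.

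Once the hypotheses are in place, one substitutes into the abstract estimate of~\cite[Theorem~3.10]{HS1}; the constant hidden in the inequality depends only on $\extrinsic{C}_{\PF,h}$ and on $\Cari$. The subtlest point, and the one I expect to require the most care in a careful write-up, is the bookkeeping around $\Ariinc^\ast$: unlike $\Ariinc$ itself, its $\Lebesgue^{2}$-adjoint does \emph{not} commute with the exterior derivative, and it is essential to confirm that in the abstract statement $\Ariinc^\ast$ appears only where inner products and right-hand sides are being compared---never where a differential identity is being transported between the two complexes. With that distinction respected, the substitution is mechanical and the abstract result yields the claimed bound verbatim.
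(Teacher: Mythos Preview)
Your proposal is correct and matches the paper's approach exactly: the paper does not give an independent proof but simply invokes Theorem~3.10 of~\cite{HS1} after noting that the required structural hypotheses (bounded cochain isomorphism $\Ariinc$, discrete Poincar\'e--Friedrichs inequality, matching harmonic dimensions) have been verified in the preceding discussion. Your additional remarks about the bookkeeping around $\Ariinc^\ast$ are sound but go beyond what the paper itself spells out.
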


The difference $\extrinsic{f}_h - \Ariinc^{\ast} f$ depends on the specific choice of constructing $\extrinsic{f}_h$, and we refer to Theorem~3.12 of~\cite{HS1} for some theoretical results. 
The remainder of this section will discuss the difference $I - \Ariinc^\ast\Ariinc$ in more detail. 
Note that 
\begin{align*}
    \left\| I - \Ariinc^\ast\Ariinc \right\|
    &=
    \sup_{ {u} \in \calP\Lambda^{k}(\extMesh,\extrinsic{\metric}) }
    \frac{ 
        \left| 
        \llangle {u} - \Ariinc^\ast\Ariinc {u}, {u} \rrangle_{\Lebesgue^{2}\Lambda^{k}(\Manifold,\extrinsic{\metric})}
        \right|
    }{
        \llangle {u}, {u} \rrangle_{\Lebesgue^{2}\Lambda^{k}(\Manifold,\extrinsic{\metric})}
    }
    \\&=
    \sup_{ {u} \in \calP\Lambda^{k}(\extMesh,\extrinsic{\metric}) }
    \frac{ 
        \left| 
        \llangle {u}, {u} \rrangle_{\Lebesgue^{2}\Lambda^{k}(\Manifold,\extrinsic{\metric})}
        -
        \llangle {u}, {u} \rrangle_{\Lebesgue^{2}\Lambda^{k}(\Manifold,\compute{\metric})}
        \right|
    }{
        \llangle {u}, {u} \rrangle_{\Lebesgue^{2}\Lambda^{k}(\Manifold,\extrinsic{\metric})}
    }
    =
    \sup_{ {u} \in \calP\Lambda^{k}(\extMesh,\extrinsic{\metric}) }
    \left| 
        1 - 
        \frac{ 
            \llangle {u}, {u} \rrangle_{\Lebesgue^{2}\Lambda^{k}(\Manifold,\compute{\metric})}
        }{
            \llangle {u}, {u} \rrangle_{\Lebesgue^{2}\Lambda^{k}(\Manifold,\extrinsic{\metric})}
        }
    \right|
    .
\end{align*}
For the analysis of the geometric error, it is that quantity that we want to bound.

\subsection{Explicit constructions of the computational problem}

In the remainder of this section, we address the existence and construction of the computational manifold $\extManifold$, the external triangulation $\extMesh$, the exact transformation $\homeo \colon \extMesh \rightarrow \Mesh$, and the approximation $\Ariinc$.
Generally speaking, te computational manifold $\extManifold$ is a known piecewise linear manifold with known triangulation $\extMesh$.
There are different possibilities, depending on the application setting.

\begin{example}
Our first example concerns applications that come with a canonical choice of geometric transformation $\homeo \colon \extManifold \rightarrow \Manifold$ from a fixed computational manifold $\extManifold$. 
For instance, we know the \emph{cubed sphere} in computational physics: 
the boundary of the unit cube $[-1,1]^3$ is mapped onto the 3D unit sphere in a bi-Lipschitz manner
such that the transformation is a diffeomorphism along the faces of the cube. 
We transfer any triangulation $\extMesh$ of the $\extManifold$ onto the physical manifold $\Manifold$, which defines the intrinsic triangulation $\extMesh$. 
The triangulation parameters of the intrinsic parameters (see~Remark~\ref{remark:summaryofmeshquantities}) are uniformly bounded in terms of the triangulation parameters of the extrinsic triangulation and the properties of the homeomorphism. 

The induced extrinsic Riemannian metric $\extrinsic{\metric}$ along any simplex $S \in \extMesh$ is
\begin{align*}
    \extrinsic{\metric}_{|x}( \vec a, \vec b )
    := 
    \metric_{|\homeo(x)}\left( \Jacobian\homeo_{|x} \vec a, \Jacobian\homeo_{|x} \vec b \right)
\end{align*}
where $x \in S$ and $\vec a, \vec b$ are tangential vectors at $x$. 
The computational Riemannian metric $\compute{\metric}$ is a piecewise polynomial interpolation of $\extrinsic{\metric}$.
Using polynomials of degree $l$, it holds that
\begin{align*}
    \left| 
        \extrinsic{\metric}_{|x}( \vec a, \vec a )
        -
        \compute{\metric}_{|x}( \vec a, \vec a )
    \right|
    \leq C h^{l} \extrinsic{\metric}_{|x}( \vec a, \vec a )
    ,
\end{align*}
where $h$ denotes the Euclidean meshsize of the extrinsic triangulation. 
Now, 
\begin{align*}
    \left|
        \langle {u}, {u} \rangle_{\Lebesgue^{2}\Lambda^{k}(\Manifold,\extrinsic{\metric})}
        -
        \langle {u}, {u} \rangle_{\Lebesgue^{2}\Lambda^{k}(\Manifold,\compute{\metric})}
    \right|
    \leq C h^{l}
    \langle {u}, {u} \rangle_{\Lebesgue^{2}\Lambda^{k}(\Manifold,\extrinsic{\metric})}
    .
\end{align*}
Consequently, this scenario has the geometric error converge with rate $l$.
\end{example}

\begin{example}
This example follows the spirit of surface finite element methods. 
The physical manifold $\Manifold$ is a closed smooth hypersurface embedded within Euclidean space $\bbR^{n+1}$. 
The computational manifold $\extManifold$ is another embedded hypersurface that is already equipped with a triangulation $\extMesh$,
and we only assume that its vertices already lie on the physical manifold.

We want to find some geometric transformation $\homeo \colon \extManifold \rightarrow \Manifold$ 
that satisfies desirable properties. 
We define the \emph{signed distance function} $\distancefunc \colon \bbR^{n+1} \rightarrow \mathbb{R}$ by the conditions $|\distancefunc(x)| = \dist\left( x, \Manifold \right)$ and $\distancefunc$ being negative in the bounded open set enclosed by the surface and positive in the unbounded open set surrounding the surface.
There exists an open neighborhood $\calU \subseteq \bbR^{n+1}$ of $\Manifold$
such that for every $x \in \calU$ there exists a unique $\cpp(x) \in \Manifold$ with $| x - \cpp(x) | = \delta(x)$, and we call $\cpp(x)$ the \emph{closest point projection} of $x$ onto $\Manifold$.
We write $\nu(x)$ for the outward-pointing unit normal at $x \in \Manifold$. 
It is possible to show that $x - \cpp(x) = \nu( \cpp(x) )$ for all $x \in \calU$. 
We extend the unit normal onto all of $\calU$ by setting $\nu(x) := \nu( \cpp(x) )$ when $x \in \calU$.
Consequently, we have the decompositions 
\begin{align*}
 x = \cpp(x) + \distancefunc(x) \nu(x), \quad x \in \calU.
\end{align*}
The closest point projection thus satisfies 
\begin{gather*}
  \cpp(x) = x - \distancefunc(x) \nu(x), \quad x \in \calU.
\end{gather*}
In particular, there exists $\overline\distancefunc > 0$ such that the set $\calU$ contains the tubular neighborhood 
\begin{align*}
    \left\{\; 
        x + \delta \nu(x) 
        \suchthat
        x \in \Manifold, -\overline{\distancefunc} < \delta < \overline{\distancefunc} 
    \;\right\}.
\end{align*}
The signed distance function $\distancefunc$, the closest-point projection $\cpp$, and the outward-pointing unit normal $\nu$ are smooth over $\calU$.

If the mesh size of $\extMesh$ is small enough, 
then the closest point projection gives rise to a bijection $\cpp \colon \extManifold \rightarrow \Manifold$. 
While $\cpp \colon T \rightarrow a(T)$ is a diffeomorphism for every simplex $T$,
the mapping $\cpp \colon \extMesh \rightarrow \Manifold$ is generally only bi-Lipschitz. 
We define our homeomorphism $\homeo \colon \extMesh \rightarrow \Manifold$ 
simply as the closest point projection $\cpp \colon \extManifold \rightarrow \Manifold$. 

We approximate $\cpp \colon \extManifold \rightarrow \Manifold$ 
by a piecewise polynomial interpolation $\cpp_{h} : \extMesh \rightarrow \bbR^{n}$ of degree $l$. 
This interpolated mapping maps $\extManifold$ onto a Lipschitz surface $\compute{\Manifold}_{l}$.
We interpret $\compute{\Manifold}_{l}$ as a triangulated piecewise polynomial surface that is intended to be a better approximation than $\extMesh$ to the original surface $\Mesh$. Note that $l=1$ corresponds to the original (affine) surface. 

We study the closest-point projection restricted to the new surface, $\cpp \colon \compute{\Manifold}_{l} \rightarrow \Manifold$.
Its Jacobian over any $T \in \extMesh$ satisfies 
\begin{align}\label{math:sfemestimates}
    \| \Jacobian \cpp - \Id \|_{L^{\infty}(T)} \leq C h_{T}^{l+1},
    \quad 
    \| \det \Jacobian \cpp - \bboldone_{T} \|_{L^{\infty}(T)} \leq C h_{T}^{l+1},
\end{align}
This is an immediate consequence of a theorem by Christiansen~\cite[Proposition~1.4.2]{christiansen2002resolution};
see also~\cite[Subsection~4.3]{HS1}.

Applying~\eqref{math:sfemestimates} with $l=1$ allows us to estimate the piecewise bi-Lipschitz constant of $\homeo \colon \extManifold \rightarrow \Manifold$.
We can thus easily bound the shape constant of the intrinsic triangulation,
as well as the other parameters mentioned in Remark~\ref{remark:summaryofmeshquantities}. 
To complete our formalism, we define the extrinsic metric tensor $\extrinsic{\metric}$ by pulling back the Riemannian metric from $\Manifold$ to $\extManifold$, and define the computational metric tensor $\compute{\metric}$ by pulling back the Riemannian metric from $\compute{\Manifold}_{l}$.
The geometric error thus compares the exact problem over $\Manifold$
with a computational problem over $\compute{\Manifold}_{l}$, 
and is bounded via~\eqref{math:sfemestimates}.
\end{example}

\end{document}